\def\RSsubtxt{section~}\newref{sub}{name = \RSsubtxt}}
\def\RSthmtxt{theorem~}\newref{thm}{name = \RSthmtxt}}
\def\RSlemtxt{lemma~}\newref{lem}{name = \RSlemtxt}}
\theoremstyle{plain}
\newtheorem{thm}{\protect\theoremname}[section]
  \theoremstyle{definition}
  \newtheorem{defn}[thm]{\protect\definitionname}
  \theoremstyle{plain}
  \newtheorem{lem}[thm]{\protect\lemmaname}
  \theoremstyle{definition}
  \newtheorem{example}[thm]{\protect\examplename}
  \theoremstyle{remark}
  \newtheorem{rem}[thm]{\protect\remarkname}
  \theoremstyle{plain}
  \newtheorem{prop}[thm]{\protect\propositionname}
  \theoremstyle{plain}
  \newtheorem{cor}[thm]{\protect\corollaryname}
\def\RSlemtxt{Lemma~}
\def\RSthmtxt{Theorem~}
\DeclareMathOperator{\Hom}{Hom}
\DeclareMathOperator{\End}{End}
\DeclareMathOperator{\tr}{\mathsf{tr}}
\DeclareMathOperator{\im}{\mathsf{im}}
\DeclareMathOperator{\Irr}{\mathsf{Irr}}
\DeclareMathOperator{\Ind}{Ind}
\DeclareMathOperator{\Res}{Res}
\DeclareMathOperator{\Set}{\text{\bf{Set}}}
\DeclareMathOperator{\type}{type}
\DeclareMathOperator{\Ex}{Ex}
\DeclareMathOperator{\Ext}{Ext}
\DeclareMathOperator{\IRR}{IRR}
\DeclareMathOperator{\FI}{{\bf FI}}
\DeclareMathOperator{\SF}{\mathcal{SF}}
\DeclareMathOperator{\inc}{\mathsf{inc}}
\DeclareMathOperator{\stab}{stab}
\DeclareMathOperator{\trace}{trace}
\providecommand{\corollaryname}{Corollary}
  \providecommand{\definitionname}{Definition}
  \providecommand{\examplename}{Example}
  \providecommand{\lemmaname}{Lemma}
  \providecommand{\propositionname}{Proposition}
  \providecommand{\remarkname}{Remark}
\providecommand{\theoremname}{Theorem}
  \providecommand{\corollaryname}{Corollary}
  \providecommand{\definitionname}{Definition}
  \providecommand{\examplename}{Example}
  \providecommand{\lemmaname}{Lemma}
  \providecommand{\propositionname}{Proposition}
  \providecommand{\remarkname}{Remark}
\providecommand{\theoremname}{Theorem}
\numberwithin{equation}{section}
\begin{document}

\title{The Littlewood-Richardson rule for wreath products with symmetric groups and the quiver of the category $F \wr \FI_n$}

\author{Itamar Stein\thanks{This paper is part of the author's PHD thesis, being carried out under
the supervision of Prof. Stuart Margolis. The author's research was
supported by Grant No. 2012080 from the United States-Israel Binational
Science Foundation (BSF).}}

\affil{ Department of Mathematics\\
 Bar Ilan University\\
 52900 Ramat Gan\\
 Israel}

\maketitle

\begin{abstract}
We give a new proof for the Littlewood-Richardson rule for the wreath
product $F\wr S_{n}$ where $F$ is a finite group. Our proof does
not use symmetric functions but more elementary representation theoretic
tools. We also derive a branching rule for inducing the natural embedding
of $F\wr S_{n}$ to $F\wr S_{n+1}$. We then apply the generalized
Littlewood-Richardson rule for computing the ordinary quiver of the
category $F\wr\FI_{n}$ where $\FI_{n}$ is the category of all injective
functions between subsets of an $n$-element set.
\end{abstract}

\section{Introduction}

Let $G$ be a finite group and let $H\leq G$ be a subgroup. By Maschke's
theorem, group algebras (over $\mathbb{C}$) are semisimple so every
group representation is a finite direct sum of irreducible ones. A
basic question in the representation theory of finite groups is the
following one: Let $V$ be some $H$-representation, what is the decomposition
of the inducted representation $\Ind_{H}^{G}V$ into irreducible $G$-representations?
Alternatively, let $U$ be some $G$-representation, what is the decomposition
of the restriction $\Res_{H}^{G}U$ into irreducible $H$-representations?
By Frobenius reciprocity, answering one of these questions essentially
answers the other one. Moreover, since both induction and restriction
are additive, it is enough to consider the case where $U$ or $V$
are irreducible representations. Even considering this reduction,
the question, in general, is very difficult. If $G=S_{n}$ the answer
is known for certain natural choices of $H$ and these solutions are
often called branching rules. The most classical case is where $H=S_{n-1}$
viewed as the subgroup of all permutations that fix $n$. An important
generalization is the Littlewood-Richardson rule which gives the answer
for the case $H=S_{k}\times S_{n-k}$.

Let $F$ and $G$ be finite groups such that $G$ acts on the left
of a finite set $X$. We denote by $F\wr_{X}G$ the wreath product
of $F$ and $G$. The representation theory of $F\wr_{X}G$ is a well-studied
subject (see \cite{Tullio2014} and \cite[Chapter 4]{James1981})
and the case $G=S_{n}$ with the natural action on $\{1,\ldots,n\}$
is of special importance. Finding generalizations for the branching
rules is a natural question. The ``classical'' branching rule for
inducing from $F\wr S_{n}$ to $F\wr S_{n+1}$ was found by Pushkarev
\cite{Pushkarev1997}. In this paper we generalize the Littlewood-Richardson
rule to the group $F\wr S_{n}$. After the present paper was already
circulating, we became aware of the paper \cite{Ingram2009} by Ingram,
Jing and Stitzinger, where the same result was obtained using symmetric
functions. However, our approach is different. We use only elementary
representation theoretic tools and base our proof on the explicit
description of the irreducible representations of $F\wr S_{n}$. In
\Secref{ClassicalBranchingRule} we use the generalized Littlewood-Richardson
rule to retrieve Pushkarev's result.

Then we turn to give an application to the representation theory of
a natural family of categories. Denote by $\FI$ the category of finite
sets and injective functions. The representation theory of $\FI$
is currently under active research which was initiated in \cite{Church2015}.
There is also research on the representation theory of the wreath
product $F\wr\FI$ (see \cite{Li2015,Ramos2015,Sam2014}). We will
be interested here in the finite version of this category. We denote
by $\FI_{n}$ the category of all subsets of $\{1,\ldots,n\}$ and
injective functions. In \Secref{ApplicationToCategories} we will
give a description of the ordinary quiver of the algebra of $F\wr\FI_{n}$.
The case where $F$ is the trivial group was originally done by \cite{Brimacombe2011}
and a simple proof was later given in \cite{Margolis2012}. For the
general case, we imitate the method of \cite{Margolis2012} but where
they use usual branching rule for $S_{n}$ we will use the generalization
for $F\wr S_{n}$.

\section{Preliminaries}

\subsection{Wreath product}

Throughout this paper $F$ and $G$ will be finite groups such that
$G$ acts on the left of some finite set $X$. An element $f\in F^{X}$
is a function from $X$ to $F$. Note that $F^{X}$ is a group, multiplication
being defined componentwise. We can also define a left action of $G$
on $F^{X}$ by 
\[
(g\ast f)(x)=f(g^{-1}x).
\]

It is easy to verify that this is indeed a left action. 
\begin{defn}
\label{def:WreathProductOfGroups}The wreath product of $F$ with
$G$ denoted $F\wr_{X}G$ is the semidirect product $F^{X}\rtimes G$.
In other words, it is the set $F^{X}\times G$ with multiplication
given by 
\[
(f,g)\cdot(f^{\prime},g^{\prime})=(f(g\ast f^{\prime}),gg^{\prime}).
\]

\end{defn}
If $H\leq G$ is a subgroup, we can restrict the action on $X$ to
$H$ and get the group $F\wr_{X}H$ which is a subgroup of $F\wr_{X}G$.

Clearly, $S_{n}$ acts on $\{1,\ldots,k\}$ (for $n\leq k$) by permuting
the first $n$ elements and fixing the other ones. We refer to this
action as the \emph{standard action} of $S_{n}$ on $\{1,\ldots,k\}$.
In this case we denote the wreath product with $F$ by $F\wr_{k}S_{n}$.
The focus of this paper will be the case where $k=n$ and we will
simply denote this group by $F\wr S_{n}$. There is a very natural
way to think of this group. Recall that we can identify $S_{n}$ with
the group of permutation matrices, that is, a permutation $\pi\in S_{n}$
can be identified with an $n\times n$ matrix $A$ where 
\[
A_{i,j}=\begin{cases}
1 & \pi(j)=i\\
0 & \text{otherwise}
\end{cases}.
\]
Similarly, we can identify $F\wr S_{n}$ with a group of matrices,
but here the non-zero entries can be any element of $F$. In other
words, the tuple $(f,\pi)$ is identified with the $n\times n$ matrix
$A$ where 
\[
A_{i,j}=\begin{cases}
f(i) & \pi(j)=i\\
0    & \text{otherwise}
\end{cases}.
\]
The multiplication of $F\wr S_{n}$ is then identified with matrix
multiplication when one assumes 
\[
0\cdot a=a\cdot0=0,\quad a+0=0+a=a
\]
for every $a\in F$.

The following fact will be of use (see \cite[Proposition 2.1.3]{Tullio2014}
for proof). 
\begin{lem}[Distributivity of the wreath product]
Let $G_{1}$ and $G_{2}$ be groups acting on disjoint sets $X_{1}$
and $X_{2}$ respectively, so $G_{1}\times G_{2}$ acts on the disjoint
union $X=X_{1}\dot{\cup}X_{2}$. Then 
\[
(F\wr_{X_{1}}G_{1})\times(F\wr_{X_{2}}G_{2})\cong F\wr_{X}(G_{1}\times G_{2}).
\]

\end{lem}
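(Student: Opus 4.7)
The plan is to construct an explicit isomorphism and verify it preserves multiplication; the underlying combinatorial idea is that a function $X_1\dot\cup X_2\to F$ is the same data as a pair of functions $X_1\to F$ and $X_2\to F$.

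First I would set up the obvious bijection $\Phi\colon (F\wr_{X_1}G_1)\times(F\wr_{X_2}G_2)\to F\wr_X(G_1\times G_2)$. Given $((f_1,g_1),(f_2,g_2))$ with $f_i\in F^{X_i}$ and $g_i\in G_i$, define $f\in F^X$ by $f(x)=f_i(x)$ for $x\in X_i$, and set $\Phi((f_1,g_1),(f_2,g_2))=(f,(g_1,g_2))$. That $\Phi$ is a set-theoretic bijection follows from the canonical identification $F^{X_1}\times F^{X_2}\cong F^{X_1\dot\cup X_2}$.

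Next I would check compatibility of the $G_1\times G_2$-action on $F^X$ with the componentwise actions on $F^{X_i}$. By definition, $(g_1,g_2)$ acts on $x\in X$ by $g_i x$ whenever $x\in X_i$ (since $X_1, X_2$ are disjoint and $G_1, G_2$ act only on their own set). Hence for $f'\in F^X$ corresponding to $(f_1',f_2')$, the function $(g_1,g_2)\ast f'$ satisfies, for $x\in X_i$,
\[
\bigl((g_1,g_2)\ast f'\bigr)(x)=f'\bigl((g_1,g_2)^{-1}x\bigr)=f_i'(g_i^{-1}x)=(g_i\ast f_i')(x),
\]
so under the identification $F^X\cong F^{X_1}\times F^{X_2}$ the $G_1\times G_2$-action is exactly the product of the two $G_i$-actions.

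With this in hand, verifying that $\Phi$ is a group homomorphism is a direct computation using \defref{WreathProductOfGroups}: multiplying componentwise in the left-hand side yields pairs of the form $(f_i(g_i\ast f_i'),g_ig_i')$, while multiplying in the right-hand side yields $\bigl(f\cdot((g_1,g_2)\ast f'),\,(g_1g_1',g_2g_2')\bigr)$, and the previous identity shows that the restriction of $f\cdot((g_1,g_2)\ast f')$ to $X_i$ equals $f_i(g_i\ast f_i')$. No step presents a real obstacle; the only thing to watch is the bookkeeping that ensures the $G_1\times G_2$-action restricts correctly to each $X_i$, which is where the disjointness hypothesis on $X_1$ and $X_2$ is essential.
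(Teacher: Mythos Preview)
Your argument is correct: the identification $F^{X_1}\times F^{X_2}\cong F^{X_1\dot\cup X_2}$ together with the check that the $(G_1\times G_2)$-action on $F^X$ restricts to the $G_i$-action on $F^{X_i}$ is exactly what is needed, and your verification that $\Phi$ is a homomorphism is straightforward and complete. Note that the paper does not actually supply its own proof of this lemma but merely cites \cite[Proposition 2.1.3]{Tullio2014}; your explicit construction is the standard one and there is nothing further to compare.
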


\subsection{Complex group representations}

We only consider representations over $\mathbb{C}$ in this paper.
A $G$-representation is a pair $(U,\rho)$ where $U$ is a finite
dimensional vector space and $\rho:G\to\End(U)$ is a group homomorphism.
This is equivalent to an action of $G$ on the vector space $U$ by
linear transformations. We will sometimes omit the homomorphism and
say that $U$ is a $G$-representation. For $u\in U$ we will usually
write $g\cdot u$ or even $gu$ instead of $\rho(g)(u)$. Let $U$
and $V$ be two $G$-representations. We say that $U$ is \emph{isomorphic}
to $V$ (and write $U\cong V$) if there is a vector space isomorphism
$T:U\to V$ such that $T(g\cdot u)=g\cdot T(u)$ for every $g\in G$
and $u\in U$. The direct sum $U\oplus V$ of two $G$-representations
is again a $G$-representation according to $g\cdot(u+v)=gu+gv$.
A subvector space $V\subseteq U$ is called a \emph{subrepresentation}
if it is closed under the action of $G$, that is, $g\cdot v\in V$
for all $g\in G$ and $v\in V$. A non-zero $G$-representation $U$
is called \emph{irreducible} if its only subrepresentations are $0$
and $U$. We denote the set of irreducible representations of $G$
(up to isomorphism) by $\Irr G$. It is well known that every $G$-representation
is a finite direct sum of irreducible representations and that the
number of different irreducible $G$-representations (up to isomorphism)
is the number of conjugacy classes of $G$. We denote the trivial
representation of any group $G$ by $\tr_{G}$. Recall that if $V$
is a $G$-representations, then $V^{\ast}=\Hom(V,\mathbb{C})$ is
also a $G$-representation with operation $(g\cdot\varphi)(v)=\varphi(g^{-1}v)$.
Let $U$ and $V$ be $G$-representations. The inner tensor product
$U\otimes V$ is again a $G$-representation with action defined by
$g\cdot(u\otimes v)=gu\otimes gv$ and extending linearly. Now, assume
that $U_{1}$ and $U_{2}$ are $G_{1}$ and $G_{2}$-representations
respectively. The outer tensor product $U_{1}\otimes U_{2}$ of $U_{1}$
and $U_{2}$ is the ($G_{1}\times G_{2}$)-representation where $(g_{1},g_{2})\cdot(u_{1}\otimes u_{2})=(g_{1}u_{1})\otimes(g_{2}u_{2})$.
Although the two types of tensor product can be distinguished by the
context we prefer using different notation for outer tensor product,
denoting it by $\boxtimes$. Likewise, the simple tensors of $U\boxtimes V$
will by denoted by $u\boxtimes v$. It is well known that $\Irr(G_{1}\times G_{2})=\{U\boxtimes V\mid U\in\Irr G_{1},V\in\Irr G_{2}\}$.
Another simple observation will be important. 
\begin{lem}
\label{lem:CommutativityOfOuterAndInnerTensorProducts} Let $U_{1}$
and $V_{1}$ ($U_{2}$ and $V_{2}$) be $G_{1}$ (respectively, $G_{2}$)-representations.
Then 
\[
(U_{1}\boxtimes U_{2})\otimes(V_{1}\boxtimes V_{2})\cong(U_{1}\otimes V_{1})\boxtimes(U_{2}\otimes V_{2})
\]
as ($G_{1}\times G_{2}$)-representations. \end{lem}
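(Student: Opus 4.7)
The plan is to produce the obvious ``swap the middle factors'' linear isomorphism and then verify that, under the prescribed $(G_{1}\times G_{2})$-actions, it is equivariant. Concretely, I would define $T\colon (U_{1}\boxtimes U_{2})\otimes(V_{1}\boxtimes V_{2})\to(U_{1}\otimes V_{1})\boxtimes(U_{2}\otimes V_{2})$ by setting, on simple tensors,
\[
T\bigl((u_{1}\boxtimes u_{2})\otimes(v_{1}\boxtimes v_{2})\bigr)=(u_{1}\otimes v_{1})\boxtimes(u_{2}\otimes v_{2}),
\]
and extending bilinearly. The first routine step is to check that this assignment is well-defined and a vector space isomorphism; this is just the canonical reordering of factors in a fourfold tensor product of vector spaces, so it follows from the universal property of the tensor product applied twice (or, equivalently, by noting that $T$ sends the basis $\{(u^{i}_{1}\boxtimes u^{j}_{2})\otimes(v^{k}_{1}\boxtimes v^{\ell}_{2})\}$ built from bases of the $U_{a}$ and $V_{a}$ bijectively to the analogous basis on the other side.

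The main content is the equivariance check. Unpacking the two action conventions recalled in the preceding paragraphs, for $(g_{1},g_{2})\in G_{1}\times G_{2}$ the inner-tensor action on the outer tensor products yields
\[
(g_{1},g_{2})\cdot\bigl((u_{1}\boxtimes u_{2})\otimes(v_{1}\boxtimes v_{2})\bigr)=\bigl((g_{1}u_{1})\boxtimes(g_{2}u_{2})\bigr)\otimes\bigl((g_{1}v_{1})\boxtimes(g_{2}v_{2})\bigr),
\]
so applying $T$ gives $((g_{1}u_{1})\otimes(g_{1}v_{1}))\boxtimes((g_{2}u_{2})\otimes(g_{2}v_{2}))$. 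On the other hand, the outer-tensor action on the right-hand side, combined with the inner-tensor action inside each factor, gives exactly
\[
(g_{1},g_{2})\cdot\bigl((u_{1}\otimes v_{1})\boxtimes(u_{2}\otimes v_{2})\bigr)=\bigl((g_{1}u_{1})\otimes(g_{1}v_{1})\bigr)\boxtimes\bigl((g_{2}u_{2})\otimes(g_{2}v_{2})\bigr).
\]
Thus $T\circ\rho(g_{1},g_{2})=\rho'(g_{1},g_{2})\circ T$ on simple tensors, and by linearity on all of the domain.

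There is really no obstacle here; the only thing to be careful about is bookkeeping, namely keeping track of which tensor symbol is ``inner'' (carrying the diagonal action of a single group) and which is ``outer'' (carrying the product action), since both are written multiplicatively. I would state the proof by first fixing the notational convention, then exhibiting $T$ on simple tensors, invoking the universal property for well-definedness and invertibility, and finally performing the one-line action computation above to conclude equivariance.
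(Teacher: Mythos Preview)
Your proposal is correct and follows essentially the same approach as the paper: define $T$ on simple tensors by swapping the middle factors, note it is a vector space isomorphism, and verify equivariance by a direct computation of the $(g_{1},g_{2})$-action on both sides. The only difference is that you spell out the well-definedness via the universal property or bases, whereas the paper simply asserts that $T$ ``clearly extends to a vector space isomorphism.''
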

\begin{proof}
Define $T:(U_{1}\boxtimes U_{2})\otimes(V_{1}\boxtimes V_{2})\to(U_{1}\otimes V_{1})\boxtimes(U_{2}\otimes V_{2})$
by 
\[
T((u_{1}\boxtimes u_{2})\otimes(v_{1}\boxtimes v_{2}))=(u_{1}\otimes v_{1})\boxtimes(u_{2}\otimes v_{2})
\]
which clearly extends to a vector space isomorphism and also 
\begin{align*}
T((g_{1},g_{2})\cdot((u_{1}\boxtimes u_{2})\otimes(v_{1}\boxtimes v_{2}))) & =T((g_{1}u_{1}\boxtimes g_{2}u_{2})\otimes(g_{1}v_{1}\boxtimes g_{2}v_{2}))\\
 & =(g_{1}u_{1}\otimes g_{1}v_{1})\boxtimes(g_{2}u_{2}\otimes g_{2}v_{2})\\
 & =(g_{1},g_{2})\cdot T((u_{1}\boxtimes u_{2})\otimes(v_{1}\boxtimes v_{2}))
\end{align*}
as required. 
\end{proof}
The \emph{character $\chi_{U}$ }of the $G$-representation $(U,\rho)$
is the function $\chi_{U}:G\to\mathbb{C}$ defined by $\chi_{U}(g)=\trace(\rho (g))$.
Recall that the multiplicity $U\in\Irr G$ as an irreducible constituent
in some $G$-representation $V$ is given by the inner product 
\[
\langle\chi_{U},\chi_{V}\rangle=\frac{1}{|G|}\sum_{g\in G}\chi_{U}(g)\overline{\chi_{V}(g)}.
\]
Recall also that $\chi_{V^{\ast}}(g)=\overline{\chi_{V}(g)}$ and
$\chi_{U\boxtimes V}((g_{1},g_{2}))=\chi_{U}(g_{1})\chi_{V}(g_{2})$.
In order to simplify notation, we will usually omit the $\chi$ and
write $U$ also for the character of $U$. Hence the above inner product
will be written as
\[
\langle U,V\rangle=\frac{1}{|G|}\sum_{g\in G}U(g)\overline{V(g)}.
\]

\subsection{Restriction and induction}

Let $(U,\rho)$ be a $G$-representation and let $H\leq G$ be a subgroup.
The \emph{restriction} of $(U,\rho)$ to $H$ denoted $(\Res_{H}^{G}U,\Res_{H}^{G}\rho)$
is an $H$-representation defined by 
\[
\Res_{H}^{G}\rho(h)(u)=\rho(h)(u)
\]
that is, restricting the homomorphism to the subgroup $H$. Note that
$\dim\Res_{H}^{G}U\allowbreak=\dim U$ and if $U$ is an irreducible
$G$-representation then $\Res_{H}^{G}U$ does not have to be an irreducible
$H$-representation. Let $(U,\rho)$ be an $H$-representation, the\emph{
induction} to $G$ denoted $(\Ind_{H}^{G}U,\Ind_{H}^{G}\rho)$ is
the tensor product 
\[
\Ind_{H}^{G}U=\mathbb{C}G\underset{\mathbb{C}H}{\otimes}U
\]
where the $G$ action is given by 
\[
g\cdot(s\otimes u)=(gs)\otimes u
\]
where $s\in\mathbb{C}G$ and $u\in U$. However, we will usually  use
the following more concrete description. Choose $S=\{s_{1},\ldots,s_{l}\}$
to be representatives of the left cosets of $H$ in $G$ (where $l=[G:H]$).
Note that any element $g\in G$ can be written in a unique way as
$g=s_{i}h$ where $s_{i}\in S$ and $h\in H$. Every element of $\Ind_{H}^{G}U$
is a formal sum of the form

\[
\alpha_{1}(s_{1},u_{1})+\ldots+\alpha_{l}(s_{l},u_{l})
\]
where $u_{i}\in U$ and $\alpha_{i}\in\mathbb{C}$. In other words,
as a vector space $\Ind_{H}^{G}U$ is ${\displaystyle \bigoplus_{i=1}^{l}U}$,
that is, $l$ copies of $U$. The action is defined on elements of
the form $(s_{i},u)$ by 
\[
g\cdot(s_{i},u)=(s_{j},h\cdot u)
\]
where $s_{j}$ and $h$ are unique such that $gs_{i}=s_{j}h$. The
required action is given by extending linearly. Note that $\dim\Ind_{H}^{G}U=[G:H]\dim U$.
It is important to mention that the representations $\Ind_{H}^{G}U$
and $\Res_{H}^{G}V$ depend not only on the groups $G$ and $H$
but also on the specific embedding of $H$ into $G$. Hence we will
have to give the specific embeddings when discussing these representations.
Both induction and restriction are transitive and additive, that is,
if $K\leq H\leq G$ then
\[
\Ind_{H}^{G}\Ind_{K}^{H}U\cong\Ind_{K}^{G}U,\quad\Ind_{H}^{G}(U\oplus V)\cong\Ind_{H}^{G}U\oplus\Ind_{H}^{G}V
\]
and 
\[
\Res_{K}^{H}\Res_{H}^{G}U\cong\Res_{K}^{G}U,\quad\Res_{H}^{G}(U\oplus V)\cong\Res_{H}^{G}U\oplus\Res_{H}^{G}V.
\]
For restriction this is a trivial statement and for induction the
proof is \cite[Propositions 1.1.10 and 1.1.11]{Tullio2014}. An important
fact that relates induction to restriction is the following one (for
proof see \cite[Corollary 1.1.20 ]{Tullio2014}). 
\begin{thm}[Frobenius reciprocity]
\label{thm:FrobeniusReciprocity} Let $H\leq G$ and let $U$ and
$V$ be $G$ and $H$-representations respectively. Then the multiplicity
of $V$ in $\Res_{H}^{G}U$ equals the multiplicity of $U$ in $\Ind_{H}^{G}V$. 
\end{thm}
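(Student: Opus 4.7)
The plan is to work entirely at the level of characters and verify
$$\langle U,\Ind_H^G V\rangle_G \;=\; \langle \Res_H^G U, V\rangle_H,$$
which, by the inner-product formulation of multiplicity recalled in the preliminaries (together with the Hermitian symmetry of the inner product that makes both sides real non-negative integers when one of the representations is irreducible), yields the theorem. The argument has two ingredients. First, I would extract the induced character formula from the explicit realization of $\Ind_H^G V$ as $\bigoplus_{i=1}^l V$ given above. Fixing left coset representatives $s_1,\ldots,s_l$, the action of $g$ permutes the summands via the rule $g s_i = s_{\sigma(i)} h_i$ with $h_i\in H$; so only those $i$ with $\sigma(i)=i$, equivalently $s_i^{-1} g s_i \in H$, produce a diagonal block, contributing $\chi_V(s_i^{-1} g s_i)$ to the trace. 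Writing this as an average over all of $G$ (each $x\in G$ factors uniquely as $x = s_i h'$, and $\chi_V$ is a class function on $H$) converts it into the symmetric form
$$\chi_{\Ind_H^G V}(g) \;=\; \frac{1}{|H|}\sum_{\substack{x \in G \\ x^{-1}gx \in H}} \chi_V(x^{-1}gx).$$

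The second ingredient is a change of variables. Substituting the above into the definition of $\langle U,\Ind_H^G V\rangle_G$ produces
$$\frac{1}{|G|\,|H|}\sum_{\substack{(g,x)\in G\times G \\ x^{-1}gx \in H}} \chi_U(g)\,\overline{\chi_V(x^{-1}gx)}.$$
The map $(g,x)\mapsto (h,x)$ with $h := x^{-1}gx$ is a bijection from the index set onto $H \times G$, with inverse $(h,x)\mapsto(xhx^{-1},x)$. Since $\chi_U$ is a class function on $G$, we have $\chi_U(g) = \chi_U(xhx^{-1}) = \chi_U(h)$, so the summand no longer depends on $x$; the sum over $x\in G$ contributes a factor of $|G|$ that cancels, leaving
$$\frac{1}{|H|}\sum_{h\in H}\chi_U(h)\,\overline{\chi_V(h)} \;=\; \langle \Res_H^G U, V\rangle_H,$$
as desired.

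The main obstacle is purely notational: deriving the averaged induced-character formula cleanly from the concrete coset model, and verifying that the reindexing is indeed a bijection. Neither step involves anything beyond elementary manipulation of sums together with the class-function property, so no conceptual difficulty arises. (One could alternatively give a categorical proof via the tensor-hom adjunction $\Hom_G(\mathbb{C}G \otimes_{\mathbb{C}H} V, U) \cong \Hom_H(V, \Res_H^G U)$ and Schur's lemma, but the character approach is more in keeping with the tools emphasized in the preliminaries.)
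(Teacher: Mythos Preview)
Your proof is correct. The paper itself does not prove Frobenius reciprocity at all; it merely states the result and cites \cite[Corollary~1.1.20]{Tullio2014} for a proof. So there is nothing to compare against on the paper's side---you have supplied a complete, self-contained character-theoretic argument where the paper chose to outsource. Your derivation of the induced-character formula from the coset model and the subsequent bijective change of variables $(g,x)\leftrightarrow(h,x)$ are both standard and carried out correctly; the one small remark is that you implicitly use that $\chi_V$ is a class function on $H$ when passing from the coset-representative sum to the averaged form (you do flag this, but it is the only place a reader might pause). The alternative adjunction proof you mention in the final parenthetical would also work and is arguably closer in spirit to the tensor-product definition of induction the paper gives first, but the character computation you chose is entirely appropriate given that the paper immediately restates Frobenius reciprocity in inner-product form.
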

Using characters, Frobenius reciprocity can be written as the following
equality
\[
\langle\Ind_{H}^{G}V,U\rangle=\langle V,\Res_{H}^{G}U\rangle.
\]

\subsection{Representations of the symmetric group}

\ytableausetup{centertableaux}\ytableausetup{smalltableaux}

We will recall some elementary facts regarding the representation
theory of the symmetric group. More details can be found in \cite{James1981,Sagan2001}.
Recall that an \emph{integer composition} of $n$ is a tuple $\lambda=[\lambda_{1},\ldots,\lambda_{k}]$
of non-negative integers such that $\lambda_{1}+\cdots+\lambda_{k}=n$
while an \emph{integer partition of $n$ }(denoted $\lambda\vdash n$)
is an integer composition such that $\lambda_{1}\geq\lambda_{2}\geq\cdots\geq\lambda_{k}>0$.
From now on, when dealing with a partition $\lambda$ we will write
its elements in superscript $\lambda=[\lambda^{1},\ldots,\lambda^{k}]$
because we want to reserve the subscript for multipartitions. Note
that $0$ has one partition, namely the empty partition, denoted by
$\varnothing$. We can associate to any partition $\lambda$ a graphical
description called a \emph{Young diagram}, which is a table with $\lambda^{i}$
boxes in its $i$-th row. For instance, the Young diagram associated
to the partition $[3,3,2,1]$ of $9$ is: 
\[
\ydiagram{3,3,2,1}
\]

We will identify the two notions and regard integer partition and
Young diagram as synonyms. It is well known that irreducible representations
of $S_{n}$ are indexed by integer partitions of $n$. We denote the
irreducible representation associated to the partition $\lambda$
(also called its \emph{Specht module}) by $S^{\lambda}$. Explicit
description of $S^{\lambda}$ can be found in \cite[Section 2.3]{Sagan2001}.
It will be often convenient to draw the diagram $\lambda$ instead
of writing $S^{\lambda}$. For instance we may write

\[
\ydiagram{3}\oplus\ydiagram{2,1}
\]

instead of: $S^{\lambda}\oplus S^{\delta}$ for partitions $\lambda=[3]$
and $\delta=[2,1]$.

We will now describe several branching rules for $S_{n}$. Here the
advantage of using Young diagrams becomes clear. Recall that we can
think of $S_{n}$ as the group of all permutations of $\{1,\ldots,n+1\}$
that leave $n+1$ fixed.  Hence, we can view $S_{n}$ as a subgroup of $S_{n+1}$. We call this the standard embedding of $S_{n}$
into $S_{n+1}$.
In this case the branching rules are well known and very natural (proof
can be found in \cite[Section 2.8]{Sagan2001}). 
\begin{thm}[Classical branching rules]
Let $\lambda\vdash n$ be a Young diagram. \end{thm}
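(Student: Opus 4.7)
The plan is to establish the restriction branching rule first---namely that
$\Res_{S_n}^{S_{n+1}} S^{\mu}$ decomposes as the direct sum of the Specht modules $S^{\lambda}$ where $\lambda$ runs over all partitions of $n$ obtained from $\mu$ by deleting a removable corner---and then derive the induction rule immediately from Frobenius reciprocity (\thmref{FrobeniusReciprocity}). Concretely, once the restriction statement is known, the identity
\[
\langle \Ind_{S_n}^{S_{n+1}} S^{\lambda},\, S^{\mu} \rangle \;=\; \langle S^{\lambda},\, \Res_{S_n}^{S_{n+1}} S^{\mu} \rangle
\]
and Maschke's theorem force $\Ind_{S_n}^{S_{n+1}} S^{\lambda}$ to be the direct sum of those $S^{\mu}$ for which $\mu$ is obtained from $\lambda$ by adding one box in a position that keeps $\mu$ a partition.

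To prove the restriction rule, I would build an $S_n$-equivariant filtration of $S^{\mu}$ using the combinatorics of polytabloids. Order the removable corners of $\mu$ from top to bottom as $c_1,\dots,c_k$, and set $\lambda^{(i)} := \mu \setminus \{c_i\}$. For each $i$, let $W_i$ be the subspace of $S^{\mu}$ spanned by polytabloids $e_t$, where $t$ ranges over $\mu$-tableaux in which the entry $n+1$ lies in the row of one of $c_1,\dots,c_i$. Since every $\sigma \in S_n$ fixes $n+1$ and therefore preserves the row that $n+1$ occupies, each $W_i$ is stable under the $S_n$-action, and we get a chain
\[
0 = W_0 \subseteq W_1 \subseteq \cdots \subseteq W_k = \Res_{S_n}^{S_{n+1}} S^{\mu}.
\]

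The core technical step, and the place where I expect the real work, is to exhibit for each $i$ an $S_n$-equivariant isomorphism $W_i / W_{i-1} \cong S^{\lambda^{(i)}}$, via the map sending the class of $e_t$ to $e_{t'}$, where $t'$ is the $\lambda^{(i)}$-tableau gotten by deleting the cell containing $n+1$. The nontrivial point is well-definedness: one must verify that each Garnir relation defining $S^{\lambda^{(i)}}$ lifts to an element of $W_{i-1}$ inside $S^{\mu}$. The top-to-bottom ordering of the corners is essential here, because a Garnir element that spans two columns across the row of $c_i$ will produce polytabloids in which $n+1$ has moved to a strictly higher removable corner, hence already lies in $W_{i-1}$ by induction. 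Once this lifting is checked, semisimplicity of $\mathbb{C}S_n$ splits the filtration and yields the desired direct-sum decomposition.
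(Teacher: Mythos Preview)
The paper does not supply its own proof of this statement; it simply quotes the branching rules and refers the reader to \cite[Section~2.8]{Sagan2001}. Your overall plan---prove the restriction rule by filtering $S^\mu$ according to where the largest entry sits, then deduce the induction rule from Frobenius reciprocity---is exactly the classical argument one finds there and in James, so in spirit you are aligned with the cited source.

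There is, however, a genuine flaw in your construction of the filtration. You let $W_i$ be spanned by the polytabloids $e_t$ over \emph{all} tableaux with $n+1$ lying merely in the \emph{row} of some $c_j$, $j\le i$. That is too coarse and the chain collapses. Take $\mu=(2,1)$, so $n+1=3$ and the corners are $c_1=(1,2)$, $c_2=(2,1)$. The tableau $t$ with rows $(3,2)$ and $(1)$ has $3$ in row~$1$, hence $e_t\in W_1$; but column antisymmetry in column~$1$ gives $e_t=-e_{t_1}$, where $t_1$ has rows $(1,2)$ and $(3)$. Thus $e_{t_1}\in W_1$, and since $e_{t_2}\in W_1$ trivially you get $W_1=S^{(2,1)}$, forcing $W_2/W_1=0$ rather than $S^{(2)}$. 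The remedy is to require $n+1$ to occupy the corner \emph{cell} $c_j$ itself (equivalently, span only over standard tableaux, where the maximal entry is forced to sit at a removable corner). Your $S_n$-stability argument survives this change intact---$\sigma\in S_n$ fixes the cell of $n+1$, not just its row---and your map ``delete the cell containing $n+1$'' then lands in a tableau of the correct shape $\lambda^{(i)}$, resolving the internal inconsistency between your definition of $W_i$ and your description of the map. With this correction the Garnir verification you flag is indeed the substantive step, and the top-to-bottom ordering of the corners is the one that makes it go through.
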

\begin{enumerate}
\item Denote by $Y^{+}(\lambda)$ the set of Young diagrams obtained from
$\lambda$ by adding one box. Then 
\[
\Ind_{S_{n}}^{S_{n+1}}S^{\lambda}=\bigoplus_{\gamma\in Y^{+}(\lambda)}S^{\gamma}.
\]

\item Similarly, denote by $Y^{-}(\lambda)$ the set of Young diagrams obtained
from $\lambda$ by removing one box. Then 
\[
\Res_{S_{n-1}}^{S_{n}}S^{\lambda}=\bigoplus_{\gamma\in Y^{-}(\lambda)}S^{\gamma}.
\]

\end{enumerate}
\begin{example}
Let $\lambda=\ydiagram{2,1}$ then

\[
\Ind_{S_{3}}^{S_{4}}S^{\lambda}=\ydiagram{3,1}\oplus\ydiagram{2,2}\oplus\ydiagram{2,1,1}
\]

and

\[
\Res_{S_{2}}^{S_{3}}S^{\lambda}=\ydiagram{2}\oplus\ydiagram{1,1}.
\]

\end{example}
We now turn our attention to the Littlewood-Richardson branching
rule. If we identify $S_{k}$ ($S_{r}$) with the group of all permutations
of $\{1,\ldots,k+r\}$ that leave $\{k+1,\ldots,k+r\}$ (respectively,
$\{1,\ldots,k\}$) fixed we can view $S_{k}\times S_{r}$ as a subgroup
of $S_{k+r}$. Given $\lambda\vdash k$ and $\delta\vdash r$ the
Littlewood\textendash Richardson rule gives the decomposition of $\Ind_{S_{k}\times S_{r}}^{S_{k+r}}(S^{\lambda}\boxtimes S^{\delta})$
into irreducible $S_{k+r}$-representations. In other words, if we
write this decomposition as 
\[
\Ind_{S_{k}\times S_{r}}^{S_{k+r}}(S^{\lambda}\boxtimes S^{\delta})=\bigoplus_{\gamma\vdash(k+r)}c_{\lambda,\delta}^{\gamma}S^{\gamma}
\]

it gives a combinatorial interpretation for the coefficients $c_{\lambda,\delta}^{\gamma}$
(called the Littlewood\textendash Richardson coefficients). The aim
of this paper is to generalize the classical branching rules and the
Littlewood-Richardson rule to the group $F\wr S_{n}$ for any finite
group $F$. Although the details of the Littlewood-Richardson rule
for $S_{n}$ will not be essential in the sequel, we give them here
for the sake of completeness. For this we have to introduce some more
notions. First we generalize the notion of a Young diagram. For $k\leq n$,
let $\lambda=(\lambda^{1},\cdots,\lambda^{r})\vdash k$ and $\gamma=(\gamma^{1},\cdots,\gamma^{s})\vdash n$
be partitions such that $\lambda^{i}\leq\gamma^{i}$ for every $1\leq i\leq r$.
The \emph{skew diagram} $\gamma/\lambda$ is the diagram obtained
by erasing the diagram $\lambda$ from the diagram $\gamma$. For
instance if $\lambda=[2,1]$ and $\gamma=[4,3,1]$ then $\gamma/\lambda$
is the skew diagram 
\[
\ydiagram{2+2,1+2,1}.
\]

A skew tableau is a skew diagram whose boxes are filled with numbers.
We call the original diagram the \emph{shape} of the tableau. Let
$t$ be a skew tableau with $n$ boxes such that the number of boxes
with entry $i$ is $\delta^{i}$. The \emph{content} of $t$ is the
composition $\delta=[\delta^{1},\ldots,\delta^{l}]$. We say that
a skew tableau is semi-standard if its columns are increasing and
its rows are non-decreasing. For instance

\begin{align} \label{eq:exampleTableau1} \begin{ytableau}  \none & \none & 1 & 1 \\ \none & 2 & 3 \\ 2 \end{ytableau} \end{align} 

is a semi-standard skew tableau of shape $[4,3,1]/[2,1]$ with content
$[2,2,1]$. The \emph{row word} of a skew tableau $t$ is the string
of numbers obtained by reading the entries of $t$ from right to left
and top to bottom. For instance, the row word of tableau \ref{eq:exampleTableau1}
is $11322$. A string of numbers is called a lattice permutation if
for every prefix of this string and for every number $i$, there are
no less occurrences of $i$ than occurrences of $i+1$. For instance,
the string $11322$ is not a lattice permutation since the prefix
$113$ contains one $3$ and no $2$'s. Now we can state the Littlewood-Richardson
rule (for proof see \cite[Theorem 2.8.13]{James1981}). 
\begin{thm}
\label{thm:Littlewood-Richardson rule} The Littlewood-Richardson
coefficient $c_{\lambda,\delta}^{\gamma}$ is the number of semi-standard
skew tableaux of shape $\gamma/\lambda$ with content $\delta$ whose
row word is a lattice permutation.\end{thm}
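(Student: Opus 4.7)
The plan is to reduce the representation-theoretic statement to a polynomial identity among Schur symmetric functions and then establish that identity combinatorially. First I would invoke the Frobenius characteristic map from the Grothendieck ring of symmetric group representations to the ring of symmetric functions, which sends the class of the Specht module $S^{\lambda}$ to the Schur function $s_{\lambda}$. A standard computation, using the explicit coset description of induction given in the excerpt together with the formula $\chi_{U\boxtimes V}((g_1,g_2))=\chi_U(g_1)\chi_V(g_2)$, shows that this map intertwines induction from $S_{k}\times S_{r}$ to $S_{k+r}$ with the ordinary product of symmetric functions. Under this correspondence the theorem is equivalent to the polynomial identity
\[
s_{\lambda} \cdot s_{\delta} \;=\; \sum_{\gamma \vdash (k+r)} c_{\lambda,\delta}^{\gamma}\, s_{\gamma},
\]
with $c_{\lambda,\delta}^{\gamma}$ defined combinatorially as in the statement.

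Next I would establish Pieri's rule $s_{\lambda}\cdot h_{r}=\sum_{\mu} s_{\mu}$, where the sum runs over all partitions $\mu\supseteq\lambda$ for which $\mu/\lambda$ is a horizontal strip of size $r$. This is exactly the content of the theorem in the special case $\delta=[r]$: the lattice-permutation condition together with content $[r]$ forces every entry in the tableau to equal $1$, and a semi-standard filling of content $[r]$ on a skew shape is precisely a horizontal strip. Pieri itself can be proved representation-theoretically by decomposing Young's permutation module $M^{\mu}=\Ind_{S_{\mu^{1}}\times\cdots\times S_{\mu^{s}}}^{S_{n}}\tr_{S_{\mu^{1}}\times\cdots\times S_{\mu^{s}}}$ into Specht modules via Kostka numbers, invoking Frobenius reciprocity (\thmref{FrobeniusReciprocity}), and then Möbius-inverting the triangular Kostka matrix against the monomial expansion of $h_{r}$.

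From Pieri the general case would follow by induction on the number of parts of $\delta$, using the Jacobi-Trudi identity $s_{\delta}=\det(h_{\delta^{i}-i+j})$ to expand $s_{\lambda}\cdot s_{\delta}$ as an alternating sum of iterated Pieri products; each such product counts sequences of horizontal strips stacked on top of $\lambda$. The main obstacle, and where the proof becomes genuinely delicate, is matching this signed sum against the combinatorial count on the right. The standard way to do this is to construct a sign-reversing involution on the set of chains of horizontal-strip decompositions whose fixed points are exactly the semi-standard skew tableaux of shape $\gamma/\lambda$, content $\delta$, and lattice row word; this is essentially the content of the Remmel-Whitney slide, or equivalently a jeu de taquin argument. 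Making the involution explicit and verifying that the lattice-permutation condition is preserved under each cancellation is the combinatorial heart of the proof, and is the step for which the expositions in \cite{James1981} and \cite{Sagan2001} need the most machinery.
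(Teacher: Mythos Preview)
The paper does not prove this theorem at all: it is stated as background, with the parenthetical ``for proof see \cite[Theorem 2.8.13]{James1981}'', and is then used as a black box in the proof of \propref{LittlewoodRicharsonRuleSpecialCase}. So there is nothing to compare your argument against --- the paper's ``proof'' is a citation.

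That said, your outline is a recognizable sketch of one standard route to the classical rule (characteristic map, Pieri, Jacobi--Trudi, cancellation via a sign-reversing involution), and nothing in it is wrong in spirit. The honest caveat is that the final step --- building the involution whose fixed points are exactly the lattice-word semistandard tableaux --- is where essentially all the difficulty lives, and you have only named it rather than carried it out. If you were actually writing this up you would need to commit to a specific mechanism (Remmel--Whitney, Lindstr\"om--Gessel--Viennot on the Jacobi--Trudi determinant, or jeu de taquin plus Sch\"utzenberger's theorem that rectification is well-defined) and verify the bijection in detail; as it stands the proposal is an accurate roadmap but not yet a proof.
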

\begin{example}
If $\lambda=[2,1]$, $\delta=[3,2]$ and $\gamma=[4,3,1]$ then $c_{\lambda,\delta}^{\gamma}=2$
since there are two skew tableaux with the required properties. These
are: 
\end{example}
\begin{center} \begin{ytableau}  \none & \none & 1 & 1 \\ \none & 2 & 2 \\ 1 \end{ytableau} \qquad \begin{ytableau}  \none & \none & 1 & 1 \\ \none & 1 & 2 \\ 2 \end{ytableau} \end{center} 
\begin{rem}
Note that for many values of $\gamma$ we have $c_{\lambda,\delta}^{\gamma}=0$,
that is, $S^{\gamma}$ is not an irreducible constituent of $\Ind_{S_{k}\times S_{r}}^{S_{k+r}}(S^{\lambda}\boxtimes S^{\delta})$.
For instance, this happens for every $\gamma$ such that $\gamma^{i}<\lambda^{i}$
for some $i$. 
\end{rem}

\begin{rem}
\label{rem:LittlewoodRichardsonImpliesClassicalBranching}Note that
the classical branching rule for induction can be deduced from the
Littlewood-Richardson rule if we set $r=1$. 
\end{rem}

\section{Representation theory of $F\wr S_{n}$}

The goal of this section is to describe the irreducible representations
of $F\wr S_{n}$. We follow the approach of \cite{Tullio2014}, but
we introduce different notation that will be more convenient to our
purpose. We also prove some technical lemmas that will be of later
use.

\subsection{Inflation}
\begin{defn}
Let $G$ be a finite group and let $N\trianglelefteq G$ be a normal
subgroup. Let $(U,\rho)$ be a representation of $G/N$. We denote
by $(\overline{U},\overline{\rho})$ the $G$-representation defined
as follows. As a vector space, $\overline{U}=U$, and the $G$-action
is 
\[
\overline{\rho}(g)(u)=\rho(gN)(u)\quad\forall u\in U.
\]

Following \cite{Tullio2014} we call $\overline{U}$ the \emph{inflation}
of $U$.
\end{defn}
Note that $\dim\overline{U}=\dim U$ and if $U$ is an irreducible
$G/N$-representation then $\overline{U}$ is an irreducible $G$-representation
as well.

The specific case we will be interested in is the following. If $F$
and $G$ are finite groups, then $F^{X}\trianglelefteq F\wr_{X}G$
and $(F\wr_{X}G)/F^{X}\cong G$ so any $G$-representation $U$ can
be inflated into an ($F\wr_{X}G$)-representation with action 
\[
(f,g)\cdot u=gu.
\]

\begin{lem}
\label{lem:CommutativityOfInflationAndTensorProduct}Let $G_{1}$
and $G_{2}$ be groups acting on the disjoint sets $X_{1}$ and $X_{2}$
respectively and let $U_{1}$ and $U_{2}$ be $G_{1}$ and $G_{2}$-representations.
Note that $\overline{U_{i}}$ is an ($F\wr_{X_{i}}G_{i}$)-representation
and $\overline{U_{1}\boxtimes U_{2}}$ is an ($F\wr_{X}(G_{1}\times G_{2})$)-representation
(where $X=X_{1}\dot{\cup}X_{2}$). Then 
\[
\overline{U_{1}}\boxtimes\overline{U_{2}}\cong\overline{U_{1}\boxtimes U_{2}}
\]
as $(F\wr_{X_{1}}G_{1})\times(F\wr_{X_{2}}G_{2})\cong F\wr_{X}(G_{1}\times G_{2})$-representations.\end{lem}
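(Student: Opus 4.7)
The plan is to verify the isomorphism by simply unpacking the definitions on both sides: both representations live on the same underlying vector space $U_{1} \otimes U_{2}$, so the identity map is the natural candidate for the intertwiner, and all that remains is to check that the two group actions coincide once the groups are identified via the isomorphism from the distributivity lemma.

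First, I would record the isomorphism $(F\wr_{X_{1}}G_{1})\times(F\wr_{X_{2}}G_{2})\cong F\wr_{X}(G_{1}\times G_{2})$ explicitly: a pair $((f_{1},g_{1}),(f_{2},g_{2}))$ corresponds to $(f,(g_{1},g_{2}))$, where $f \colon X \to F$ is the function obtained by gluing $f_{1}$ on $X_{1}$ and $f_{2}$ on $X_{2}$. Then I would compute the action on the left-hand side: by the definition of inflation and of outer tensor product,
\[
((f_{1},g_{1}),(f_{2},g_{2}))\cdot(u_{1}\boxtimes u_{2}) = ((f_{1},g_{1})\cdot u_{1})\boxtimes((f_{2},g_{2})\cdot u_{2}) = (g_{1}u_{1})\boxtimes(g_{2}u_{2}).
\]
On the right-hand side, using the definition of inflation for the normal subgroup $F^{X}\trianglelefteq F\wr_{X}(G_{1}\times G_{2})$ followed by the definition of outer tensor product for $U_{1}\boxtimes U_{2}$ as a $(G_{1}\times G_{2})$-representation,
\[
(f,(g_{1},g_{2}))\cdot(u_{1}\boxtimes u_{2}) = (g_{1},g_{2})\cdot(u_{1}\boxtimes u_{2}) = (g_{1}u_{1})\boxtimes(g_{2}u_{2}).
\]

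Since both actions land on the same vector, the identity map $T \colon \overline{U_{1}}\boxtimes\overline{U_{2}} \to \overline{U_{1}\boxtimes U_{2}}$ sending $u_{1}\boxtimes u_{2}$ to $u_{1}\boxtimes u_{2}$ is equivariant, extending linearly to the full tensor product. There is no real obstacle here; the only point of care is to keep track of which inflation is happening on each side (the component-wise one on the left, the global one on the right) and to confirm that the identification of groups is chosen consistently with the partition $X = X_{1}\dot\cup X_{2}$. Once this bookkeeping is done, the proof is a one-line computation.
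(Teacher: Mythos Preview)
Your proposal is correct and follows essentially the same approach as the paper: both observe that the underlying vector space is $U_{1}\otimes U_{2}$ spanned by simple tensors $u_{1}\boxtimes u_{2}$, and then check that the action of $((f_{1},g_{1}),(f_{2},g_{2}))$ on such a tensor yields $g_{1}u_{1}\boxtimes g_{2}u_{2}$ on either side. Your write-up is slightly more explicit about the distributivity isomorphism and separates the two computations, but the content is identical to the paper's one-line verification.
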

\begin{proof}
As vector spaces both representations are spanned by elements of the
form $u_{1}\boxtimes u_{2}$ for $u_{i}\in U_{i}$. An element $((f_{1},g_{1}),(f_{2},g_{2}))\in F\wr_{X_{1}}G_{1}\times F\wr_{X_{2}}G_{2}$
acts in both representations by 
\[
((f_{1},g_{1}),(f_{2},g_{2}))\cdot(u_{1}\boxtimes u_{2})=g_{1}u_{1}\boxtimes g_{2}u_{2}.
\]

\end{proof}

\subsection{Conjugacy and extensions}

Let $N\trianglelefteq G$ be a normal subgroup. Then we can define
an action of $G$ on the set $\Irr N$. For every $(U,\rho)\in\Irr N$
we define $g\cdot(U,\rho)=(^{g}U,^{g}\rho)$ where 
\[
^{g}U=U,\quad^{g}\rho(n)=\rho(g^{-1}ng)
\]
for all $g\in G$ and $n\in N$.

In the case of $F^{X}\trianglelefteq F\wr G$ this action has a simple
form. Recall that the set of irreducible representations of $F^{X}$
is 
\[
\Irr F^{X}=\{{\displaystyle (\underset{x\in X}{\boxtimes}U_{x}},{\displaystyle \underset{x\in X}{\boxtimes}}\rho_{x})\mid(U_{x},\rho_{x})\in\Irr F\}.
\]

\begin{lem}[{{\cite[Lemma 2.4.1]{Tullio2014}}}]
Denote $(U,\rho)={\displaystyle (\underset{x\in X}{\boxtimes}U_{x}},{\displaystyle \underset{x\in X}{\boxtimes}}\rho_{x})\in\Irr F^{X}$
then 
\[
(f,g)\cdot U={\displaystyle \underset{x\in X}{\boxtimes}U_{g^{-1}x}}
\]
for all $(f,g)\in F\wr G$.\end{lem}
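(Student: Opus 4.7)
The plan is a direct calculation using the wreath-product multiplication. I would split the argument into three steps: reduce to the subcase $(1, g)$, compute the twisted action explicitly, and then construct an intertwiner with ${\displaystyle \underset{x \in X}{\boxtimes}} U_{g^{-1}x}$.

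For the reduction, observe that for any $f \in F^X$ the conjugation $(f, 1)^{-1}(f', 1)(f, 1) = (f^{-1} f' f, 1)$ is an inner automorphism of the normal subgroup $F^X$. Hence $^{(f, 1)}\rho \cong \rho$ via the intertwiner $\rho(f)$, and the action of $F \wr_X G$ on the set $\Irr F^X$ descends to $G \cong (F \wr_X G)/F^X$. It therefore suffices to treat elements of the form $(1, g)$.

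For $(1, g)$, I would use $(1, g)^{-1} = (1, g^{-1})$ together with the multiplication rule in $F\wr_X G$ to obtain $(1, g)^{-1}(f', 1)(1, g) = (g^{-1} \ast f', 1)$. Since $(g^{-1} \ast f')(x) = f'(gx)$ by the definition of the $G$-action on $F^X$, the twisted representation $^{(1, g)}\rho$ sends $f' \in F^X$ to ${\displaystyle \underset{x \in X}{\boxtimes}} \rho_x(f'(gx))$ acting on ${\displaystyle \underset{x \in X}{\boxtimes}} U_x$. On the other hand, the candidate target ${\displaystyle \underset{x \in X}{\boxtimes}} U_{g^{-1}x}$ carries the action $f' \mapsto {\displaystyle \underset{x \in X}{\boxtimes}} \rho_{g^{-1}x}(f'(x))$.

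To match these actions I would introduce the tensor-reindexing map $T: {\displaystyle \underset{x \in X}{\boxtimes}} U_x \to {\displaystyle \underset{x \in X}{\boxtimes}} U_{g^{-1}x}$ that places $u_{g^{-1}x} \in U_{g^{-1}x}$ in the $x$-th slot of the target for any pure tensor ${\displaystyle \underset{x}{\boxtimes}} u_x$ of the source. Since $x \mapsto g^{-1}x$ is a bijection of $X$, $T$ is a linear isomorphism, and a direct computation shows that it intertwines the two actions: substituting $y = g^{-1}x$ transforms $\rho_x(f'(gx))$ applied in the $x$-slot of the source into $\rho_{g^{-1}y}(f'(y))$ applied in the $y$-slot of the target, which is precisely the action on the right-hand side. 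The main subtlety in the proof is purely notational---the two tensor products are indexed by the same set $X$ but in different ways---so care is needed to distinguish slot labels from representation labels, after which the verification reduces to the tautology $f'(g \cdot g^{-1}x) = f'(x)$.
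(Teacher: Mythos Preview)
Your argument is correct. The paper does not actually prove this lemma; it merely cites \cite[Lemma 2.4.1]{Tullio2014} and states the result, so there is no ``paper's own proof'' to compare against. Your three-step approach---reducing to elements of the form $(1,g)$ via the observation that $F^X$ acts by inner automorphisms and hence trivially on $\Irr F^X$, computing the conjugation $(1,g)^{-1}(f',1)(1,g)=(g^{-1}\ast f',1)$, and then exhibiting the reindexing intertwiner---is the standard direct verification and is exactly what one would expect the cited reference to contain.
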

\begin{defn}
Assume that $H\leq G$ satisfies that $(U_{hx},\rho_{hx})\cong(U_{x},\rho_{x})$,
for every $x\in X$ and $h\in H$. In other words, $F\wr H$ is a subgroup
of the stabilizer of $U$. Define an ($F\wr H$)-representation $(\Ex_{H}U,\Ex_{H}\rho)$
in the following way. As a vector space, $\Ex_{H}U=U$ and the group
action is: 
\[
\Ex_{H}\rho(f,h)({\displaystyle \underset{x\in X}{\boxtimes}}u_{x})=\underset{x\in X}{\boxtimes}\rho_{h^{-1}x}(f(x))(u_{h^{-1}x})=\underset{x\in X}{\boxtimes}\rho_{x}(f(x))(u_{h^{-1}x}).
\]
$(\Ex_{H}U,\Ex_{H}\rho)$ is called the \emph{extension of $U$ with
respect to $H$}. When no ambiguity arises we write $(\widetilde{U},\widetilde{\rho})$
instead of $(\Ex_{H}U,\Ex_{H}\rho)$.\end{defn}
\begin{lem}
$(\widetilde{U},\widetilde{\rho})$ is indeed an ($F\wr H$)-representation.\end{lem}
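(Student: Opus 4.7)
The plan is to verify the three defining properties of a group representation for $\widetilde{\rho}\colon F \wr_X H \to \End(U)$: that each $\widetilde{\rho}(f,h)$ is a well-defined linear endomorphism of $U$, that the identity maps to the identity, and that $\widetilde{\rho}$ respects the wreath product multiplication. The first of these is immediate: the simple tensors $\boxtimes_{x\in X} u_x$ span $U$ and the defining formula is multilinear in the factors $u_x$, so it extends uniquely to an endomorphism of $U$; the output lies in $U$ because the stabilizer hypothesis identifies $U_{h^{-1}x}$ with $U_x$, which is precisely what makes the two forms of the defining formula agree.

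Checking the identity is straightforward: for $(f,h) = (1_{F^X}, 1_H)$ the permutation $h$ fixes $X$ pointwise, so $u_{h^{-1}x} = u_x$ and $\rho_x(1_F) = \id_{U_x}$, giving $\widetilde{\rho}(1_{F^X}, 1_H)(\boxtimes_x u_x) = \boxtimes_x u_x$.

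The main obstacle, and therefore the step I would carry out most carefully, is multiplicativity. Given $(f,h), (f',h') \in F \wr_X H$, I would first apply $\widetilde{\rho}(f',h')$ to $\boxtimes_x u_x$ to obtain $\boxtimes_x \rho_x(f'(x))(u_{h'^{-1}x})$, and then apply $\widetilde{\rho}(f,h)$. The latter replaces the dummy variable in the argument by $h^{-1}x$, yielding the tensor whose $x$-factor is $\rho_x(f(x))\,\rho_{h^{-1}x}(f'(h^{-1}x))(u_{h'^{-1}h^{-1}x})$. Using the identification $\rho_{h^{-1}x} = \rho_x$ supplied by the stabilizer hypothesis (so that both operators act on $U_x$) together with the fact that each $\rho_x$ is itself a homomorphism, this factor collapses to $\rho_x\bigl(f(x)\cdot f'(h^{-1}x)\bigr)(u_{(hh')^{-1}x})$. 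Recognizing $f(x) \cdot f'(h^{-1}x) = (f\cdot (h\ast f'))(x)$ from the definition of the $G$-action on $F^X$, and $(hh')^{-1} = h'^{-1}h^{-1}$, this expression is precisely $\widetilde{\rho}\bigl((f,h)(f',h')\bigr)(\boxtimes_x u_x)$, as required by Definition~\ref{def:WreathProductOfGroups}.
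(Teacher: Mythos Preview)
Your proof is correct and follows essentially the same approach as the paper's: both verify multiplicativity by expanding $\widetilde{\rho}(f,h)\circ\widetilde{\rho}(f',h')$ on a simple tensor, invoke the stabilizer hypothesis in the form $\rho_{h^{-1}x}=\rho_x$ to combine the two $\rho_x$-factors, and then recognize the result as $\widetilde{\rho}\bigl((f,h)(f',h')\bigr)$ via the wreath product multiplication formula. You additionally spell out well-definedness and the identity check, which the paper leaves implicit, but the substantive argument is the same.
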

\begin{proof}
We remark that this proof is essentially \cite[Lemma 2.4.3]{Tullio2014}.
Given $(f_{1},h_{1})$,$(f_{2},h_{2})\in F\wr H$ what we need to
prove is: 
\begin{equation}
\widetilde{\rho}((f_{1},h_{1})\cdot(f_{2},h_{2})){\displaystyle (\underset{x\in X}{\boxtimes}}u_{x})=\widetilde{\rho}((f_{1},h_{1}))(\widetilde{\rho}((f_{2},h_{2})){\displaystyle (\underset{x\in X}{\boxtimes}}u_{x})).\label{eq:HomProofOfTildeRep}
\end{equation}
The left hand side of \Eqref{HomProofOfTildeRep} is: 
\begin{align*}
\widetilde{\rho}((f_{1},h_{1})\cdot(f_{2},h_{2})){\displaystyle (\underset{x\in X}{\boxtimes}}u_{x}) & =\widetilde{\rho}((f_{1}(h_{1}\ast f_{2}),h_{1}h_{2})){\displaystyle (\underset{x\in X}{\boxtimes}}u_{x})=\\
 & ={\displaystyle \underset{x\in X}{\boxtimes}}\rho_{h_{2}^{-1}h_{1}^{-1}x}((f_{1}(h_{1}\ast f_{2}))(x))u_{h_{2}^{-1}h_{1}^{-1}x}.
\end{align*}
Since $\rho_{hx}=\rho_{x}$ for every $h\in H$ this equals 
\[
\underset{x\in X}{\boxtimes}\rho_{x}((f_{1}(h_{1}\ast f_{2}))(x))u_{h_{2}^{-1}h_{1}^{-1}x}={\displaystyle \underset{x\in X}{\boxtimes}}\rho_{x}(f_{1}(x)f_{2}(h_{1}^{-1}x))u_{h_{2}^{-1}h_{1}^{-1}x}.
\]

The right hand side of \Eqref{HomProofOfTildeRep} is

\[
\widetilde{\rho}((f_{1},h_{1}))(\tilde{\rho}((f_{2},h_{2})){\displaystyle (\underset{x\in X}{\boxtimes}}u_{x}))=\widetilde{\rho}((f_{1},h_{1}))({\displaystyle \underset{x\in X}{\boxtimes}}\rho_{h_{2}^{-1}x}(f_{2}(x))(u_{h_{2}^{-1}x})).
\]
Again, since $\rho_{h_{2}^{-1}x}=\rho_{x}$ the last expression equals
\begin{align*}
\widetilde{\rho}((f_{1},h_{1}))(\underset{x\in X}{\boxtimes}\rho_{x}(f_{2}(x))(u_{h_{2}^{-1}x})) & ={\displaystyle \underset{x\in X}{\boxtimes}}\rho_{h_{1}^{-1}x}(f_{1}(x))(\rho_{h_{1}^{-1}x}(f_{2}(h_{1}^{-1}x))(u_{h_{2}^{-1}h_{1}^{-1}x}))\\
 & ={\displaystyle \underset{x\in X}{\boxtimes}}\rho_{x}(f_{1}(x))(\rho_{x}(f_{2}(h_{1}^{-1}x))(u_{h_{2}^{-1}h_{1}^{-1}x}))\\
 & ={\displaystyle \underset{x\in X}{\boxtimes}}\rho_{x}(f_{1}(x)f_{2}(h_{1}^{-1}x))u_{h_{2}^{-1}h_{1}^{-1}x}.
\end{align*}

So we get the desired equality.\end{proof}
\begin{rem}
\label{rem:ExtensionRestrictionRemark}Let $H\leq K\leq G$ and let
$U={\displaystyle \underset{x\in X}{\boxtimes}U_{x}}$ be an $F^{X}$-representation
such that $F\wr K$ is a subgroup of the stabilizer of $U$. Note
that 
\[
\Ex_{H}U=\Res_{F\wr H}^{F\wr K}\Ex_{K}U.
\]
\end{rem}
\begin{lem}
\label{lem:CommutativityOfExtensionAndTensorProduct} Let $G_{1}$
and $G_{2}$ be groups acting on disjoint sets $X$ and $Y$ respectively.
Let $U={\displaystyle \underset{x\in X}{\boxtimes}U_{x}}$ and $V={\displaystyle \underset{y\in Y}{\boxtimes}U_{y}}$
be $F^{X}$ and $F^{Y}$-representations respectively. Assume that
$H_{1}\leq G_{1}$, $H_{2}\leq G_{2}$ are subgroups such that $F\wr H_{1}$
and $F\wr H_{2}$ are subgroups of the stabilizers of $U$ and $V$
respectively. Then 
\[
\Ex_{H_{1}}U\boxtimes\Ex_{H_{2}}V\cong\Ex_{H_{1}\times H_{2}}(U\boxtimes V)
\]
as $F\wr_{X\dot{\cup}Y}(H_{1}\times H_{2})\cong(F\wr_{X}H_{1})\times(F\wr_{Y}H_{2})$-representations.\end{lem}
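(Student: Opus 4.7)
The plan is to write down the obvious candidate for the isomorphism, and verify equivariance by a direct calculation on elementary tensors, following the same template as \lemref{CommutativityOfInflationAndTensorProduct}.

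First, note that as vector spaces both representations can be identified with $U\otimes V$ (in its two guises: as a single $F^{X\dot\cup Y}$-module $\boxtimes_{z\in X\dot\cup Y} W_z$ where $W_x=U_x$ for $x\in X$ and $W_y=V_y$ for $y\in Y$, and as an outer tensor of an $F^X$-module with an $F^Y$-module). So I would define $T$ on elementary tensors by
\[
T\bigl((\underset{x\in X}{\boxtimes}u_x)\boxtimes(\underset{y\in Y}{\boxtimes}v_y)\bigr)=\underset{z\in X\dot\cup Y}{\boxtimes}w_z,
\]
where $w_x=u_x$ and $w_y=v_y$; this extends linearly to a vector space isomorphism, and what remains is to verify equivariance.

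Next, I would recall from the distributivity lemma the explicit form of the isomorphism $(F\wr_{X}H_1)\times(F\wr_{Y}H_2)\cong F\wr_{X\dot\cup Y}(H_1\times H_2)$: a pair $((f_1,h_1),(f_2,h_2))$ corresponds to $(f_1\sqcup f_2,(h_1,h_2))$, where $f_1\sqcup f_2\in F^{X\dot\cup Y}$ restricts to $f_i$ on its respective coordinate, and $(h_1,h_2)$ acts on $X\dot\cup Y$ via $h_1$ on $X$ and $h_2$ on $Y$. With this notation, computing the left-hand side gives
\[
((f_1,h_1),(f_2,h_2))\cdot\bigl((\underset{x}{\boxtimes}u_x)\boxtimes(\underset{y}{\boxtimes}v_y)\bigr)=\bigl(\underset{x}{\boxtimes}\rho_x(f_1(x))u_{h_1^{-1}x}\bigr)\boxtimes\bigl(\underset{y}{\boxtimes}\rho_y(f_2(y))v_{h_2^{-1}y}\bigr)
\]
by applying the definition of $\Ex_{H_i}$ on each tensor factor. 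On the right-hand side, applying the definition of $\Ex_{H_1\times H_2}$ to the single $F^{X\dot\cup Y}$-representation $U\boxtimes V$ with group element $(f_1\sqcup f_2,(h_1,h_2))$ yields $\boxtimes_{z}\sigma_z((f_1\sqcup f_2)(z))w_{(h_1,h_2)^{-1}z}$, and since $(h_1,h_2)^{-1}$ restricts to $h_i^{-1}$ on each piece and $(f_1\sqcup f_2)$ restricts to $f_i$, this is exactly the same tensor under the identification $T$.

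I do not expect a real obstacle here: the computation is pure bookkeeping, and the key reason everything works is that the disjoint union $X\dot\cup Y$ decouples the two actions so the $x$- and $y$-factors transform independently. The only point to be careful about is that the implicit isomorphism on groups matches $(f_1,f_2)$ with $f_1\sqcup f_2$; once this is fixed the two actions are visibly the same formula written two different ways.
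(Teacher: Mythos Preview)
Your proposal is correct and is essentially the same as the paper's proof: the paper simply observes that both vector spaces are spanned by the same elementary tensors $(\boxtimes_{x}u_x)\boxtimes(\boxtimes_{y}u_y)$ and that the group action in both cases is given by the identical formula $((f_1,h_1),(f_2,h_2))\cdot(\boxtimes_x u_x)\boxtimes(\boxtimes_y u_y)=(\boxtimes_x f_1(x)u_{h_1^{-1}x})\boxtimes(\boxtimes_y f_2(y)u_{h_2^{-1}y})$. Your write-up is more explicit about the identification $T$ and the group isomorphism $(f_1,f_2)\leftrightarrow f_1\sqcup f_2$, but the content is the same bookkeeping.
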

\begin{proof}
Both vector spaces are spanned by elements of the form ${\displaystyle (\underset{x\in X}{\boxtimes}}u_{x})\boxtimes{\displaystyle (\underset{y\in Y}{\boxtimes}}u_{y})$
and in both cases the group action is 
\[
((f_{1},h_{1}),(f_{2},h_{2}))\cdot({\displaystyle (\underset{x\in X}{\boxtimes}}u_{x})\boxtimes{\displaystyle (\underset{y\in Y}{\boxtimes}}u_{y}))={\displaystyle (\underset{x\in X}{\boxtimes}f_{1}(x)}\cdot u_{h_{1}^{-1}x})\boxtimes{\displaystyle (\underset{y\in Y}{\boxtimes}}f_{2}(y)\cdot u_{h_{2}^{-1}y}).
\]

\end{proof}

\subsection{Irreducible representations of $F\wr S_{n}$}

We now return to the specific case of $F\wr S_{n}$. From now on,
fix some indexing for the set of irreducible representation of $F$,
say $\Irr F=\{U_{1},\ldots,U_{l}\}$. Without loss of generality,
we assume that $U_{1}$ is the trivial representation of $F$. Let
$1\leq i_{j}\leq l$ for $j=1,\ldots,n$ and let 
\[
(U,\rho)={\displaystyle (\stackrel[j=1]{n}{\boxtimes}U_{i_{j}}},{\displaystyle \stackrel[x=1]{n}{\boxtimes}}\rho_{i_{j}})
\]
be an irreducible $F^{n}$-representation. We define its \emph{type}
to be the integer composition 
\[
\type(U)=(n_{1},\ldots,n_{l})
\]
such that $n_{i}$ is the number of $j\in\{1,\ldots,n\}$ such that
$U_{i_{j}}\cong U_{i}$. Clearly ${\displaystyle \sum_{i=1}^{l}n_{i}=n}$
and $n_{i}\geq0$ for every $1\leq i\leq l$. It is clear that two
irreducible representations are in the same orbit of conjugation if
and only if they have the same type. Moreover, the stabilizer of $U$
is isomorphic to $F\wr(S_{n_{1}}\times\cdots\times S_{n_{l}})$. Given
a specific composition ${\bf n}=(n_{1},\ldots,n_{l})$ define 
\[
U_{{\bf n}}={\displaystyle U_{1}^{\boxtimes n_{1}}\boxtimes\cdots\boxtimes U_{l}^{\boxtimes n_{l}}}
\]
that is, the first $n_{1}$ representations in the product are $U_{1}$,
the next $n_{2}$ representations are $U_{2}$ etc. Clearly, the type
of $U_{{\bf n}}$ is ${\bf n}$ and the set $\{U_{{\bf n}}\mid{\bf n}$
is an integer composition of $n$\} serve as a set of representatives
for the orbits of conjugation. A tuple $\Lambda=(\lambda_{1},\ldots,\lambda_{l})$
such that $\lambda_{i}\vdash n_{i}$ for every $i$ is called a \emph{multipartition}
of $n$ with $l$ components. We will also call it a multipartition
of the composition ${\bf n}$ and denote this by $\Lambda\Vdash{\bf n}$.
We denote by $S^{\Lambda}$ the (irreducible) $S_{n_{1}}\times\cdots\times S_{n_{l}}$-representation
$S^{\Lambda}=S^{\lambda_{1}}\boxtimes\cdots\boxtimes S^{\lambda_{l}}$.
Finally we can define: 
\begin{defn}
Let ${\bf n}=(n_{1},\ldots,n_{l})$ be an integer composition of $n$
and let $\Lambda=(\lambda_{1},\ldots,\lambda_{l})$ be a multipartition
of $\mathbf{n}$. Denote by $\Phi_{\Lambda}=\Phi_{(\lambda_{1},\ldots,\lambda_{l})}$
the $F\wr S_{n}$ representation 
\[
\Ind_{F\wr(S_{n_{1}}\times\cdots\times S_{n_{l}})}^{F\wr S_{n}}(\widetilde{U_{{\bf n}}}\otimes\overline{S^{\Lambda}}).
\]
\end{defn}
\begin{thm}
\cite[Theorem 2.6.1]{Tullio2014} The set 
\[
\{\Phi_{\Lambda}\mid\mbox{{\bf n}\text{ is some integer composition of \ensuremath{n} and }}\Lambda\text{ is a multipartition of }\mbox{{\bf n}}\}
\]
is a complete list of the irreducible representations of $F\wr S_{n}$.
Moreover, $\Phi_{\Lambda}\cong\Phi_{\Lambda^{\prime}}$ if and only
if $\Lambda=\Lambda^{\prime}$. 
\end{thm}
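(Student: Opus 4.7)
The plan is to apply Clifford theory to the normal subgroup $F^{n}\trianglelefteq F\wr S_{n}$, whose quotient $(F\wr S_{n})/F^{n}\cong S_{n}$. The representations $\Phi_{\Lambda}$ built in the excerpt are exactly the output of this machine; one only has to verify the three ingredients that make Clifford theory work here, namely (a) a description of the orbits of the conjugation action on $\Irr(F^{n})$ together with their stabilizers, (b) a lift of each orbit representative to the stabilizer, and (c) the usual Clifford correspondence by induction.

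First, I would identify the orbits. By the lemma attributed to Tullio, conjugation by $(f,g)\in F\wr S_{n}$ simply permutes the tensor factors of $\boxtimes_{x=1}^{n}U_{i_{x}}$ via $g$, so two irreducibles of $F^{n}$ lie in the same orbit iff they have the same type $\mathbf{n}=(n_{1},\ldots,n_{l})$. Hence the $U_{\mathbf{n}}$ serve as orbit representatives, and by Distributivity of the wreath product the stabilizer of $U_{\mathbf{n}}$ is precisely $F\wr(S_{n_{1}}\times\cdots\times S_{n_{l}})$.

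Second, I would use the extension $\widetilde{U_{\mathbf{n}}}$ constructed in the preceding subsection as an honest (not projective) representation of this stabilizer whose restriction to $F^{n}$ is $U_{\mathbf{n}}$. Its existence shows that the Clifford 2-cocycle obstruction vanishes, so Gallagher's theorem applies and gives a bijection
\[
\Irr\bigl(S_{n_{1}}\times\cdots\times S_{n_{l}}\bigr)\;\longleftrightarrow\;\{W\in\Irr(F\wr(S_{n_{1}}\times\cdots\times S_{n_{l}}))\mid \Res_{F^{n}}W \text{ is }U_{\mathbf{n}}\text{-isotypic}\},
\]
sending $V\mapsto\widetilde{U_{\mathbf{n}}}\otimes\overline{V}$. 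Since $\Irr(S_{n_{1}}\times\cdots\times S_{n_{l}})=\{S^{\Lambda}\mid\Lambda\Vdash\mathbf{n}\}$, the stabilizer-side irreducibles above $U_{\mathbf{n}}$ are exactly $\widetilde{U_{\mathbf{n}}}\otimes\overline{S^{\Lambda}}$.

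Finally, the classical Clifford correspondence says that induction from the stabilizer to $F\wr S_{n}$ takes each such irreducible to an irreducible of $F\wr S_{n}$, and that pairs coming from different orbits or from different stabilizer-irreducibles induce to non-isomorphic representations which together exhaust $\Irr(F\wr S_{n})$. This gives precisely the $\Phi_{\Lambda}$ and proves both the completeness and the irredundancy claims. The main obstacle is the verification that Clifford theory applies without a cocycle twist; this is exactly what the explicit extension $\widetilde{U_{\mathbf{n}}}$ achieves, and its construction is the only delicate point. As an independent sanity check, the conjugacy classes of $F\wr S_{n}$ are known to be parametrized by $l$-multipartitions of $n$ (with $l=|\Irr F|$), matching the number of pairs $(\mathbf{n},\Lambda)$, so the count of irreducibles is correct.
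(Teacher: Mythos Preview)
The paper does not prove this theorem; it is stated with a citation to \cite[Theorem 2.6.1]{Tullio2014} and used as a black box. Your Clifford-theoretic sketch is correct and is in fact the standard argument (and the one carried out in the cited reference): orbits of $\Irr(F^{n})$ under conjugation are parametrized by types~$\mathbf{n}$, the stabilizer of $U_{\mathbf{n}}$ is $F\wr(S_{n_{1}}\times\cdots\times S_{n_{l}})$, the explicit extension $\widetilde{U_{\mathbf{n}}}$ kills the cocycle obstruction so Gallagher applies, and Clifford correspondence finishes. The only point worth tightening in a full write-up is the verification that $\Res_{F^{n}}(\widetilde{U_{\mathbf{n}}}\otimes\overline{S^{\Lambda}})$ is $U_{\mathbf{n}}$-isotypic and that $\widetilde{U_{\mathbf{n}}}\otimes\overline{S^{\Lambda}}$ is irreducible over the stabilizer, but both follow immediately from the fact that $\overline{S^{\Lambda}}$ is trivial on $F^{n}$ and $\widetilde{U_{\mathbf{n}}}$ restricts to the irreducible $U_{\mathbf{n}}$.
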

We define a \emph{multi-Young diagram} to be a tuple of Young diagrams.
As we identify partitions and Young diagrams, we also identify multipartitions
and multi-Young diagrams. Hence, multi-Young diagrams (with $l$ components)
index the irreducible representations of $F\wr S_{n}$. For instance,
if $\Lambda=([2],[2,1],[1,1,1])$ then the irreducible representation
$\Phi_{\Lambda}$ of (say) $S_{3}\wr S_{8}$ corresponds to the multi-Young
diagram

\begin{center}
(\,\ydiagram{2}\,,\quad{}\ydiagram{2,1}\,,\quad{}\ydiagram{1,1,1}\,).
\par\end{center}
\begin{rem}
\label{rem:MultiYoungDiagramOfTrivialRepresentations}Note that $\tr_{F}$,
the trivial representation of $F=F\wr S_1$, corresponds to the multi-Young
diagram
\[
(\ydiagram{1}\,,\varnothing,\ldots,\varnothing).
\]
 
\end{rem}
Let $\lambda$ be some partition of $n$. We set $\Phi_{\lambda}^{i}=\Phi_{(\varnothing,\ldots,\varnothing,\lambda,\varnothing,\ldots,\varnothing)}$
where the non-empty partition is in the $i$-th position. Note that
\[
\Phi_{\lambda}^{i}=\widetilde{U_{i}^{\boxtimes n}}\otimes\overline{S^{\lambda}}.
\]

\begin{rem}
\label{rem:OnIrreducibleRepU_iInDifferentNotation} Since $\{U_{1},\ldots,U_{l}\}$
are irreducible representations of $F=F\wr S_{1}$ then $U_{i}$ can
be also written as $\Phi_{([1])}^{i}$. 
\end{rem}
A key observation is the following one. 
\begin{prop}
\label{prop:RecursionForIrreducibleRep}Let ${\bf n}=(n_{1},\ldots,n_{l})$
be an integer composition of $n$ and let $\Lambda=(\lambda_{1},\ldots,\lambda_{l})$
be a multipartition of ${\bf n}$. The following isomorphism holds:
\[
\Phi_{\Lambda}\cong\Ind_{F\wr S_{n_{1}}\times\cdots\times F\wr S_{n_{l}}}^{F\wr S_{n}}(\Phi_{\lambda_{1}}^{1}\boxtimes\cdots\boxtimes\Phi_{\lambda_{l}}^{l}).
\]
\end{prop}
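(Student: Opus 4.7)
The plan is to unfold the definition of $\Phi_{\Lambda}$ on the right-hand side of the induction symbol and recognize it as an outer tensor product of the $\Phi_{\lambda_i}^i$'s, after which the desired isomorphism becomes a tautology.

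First I would note that, by the distributivity of the wreath product applied to the disjoint decomposition $\{1,\ldots,n\} = X_1 \dot\cup \cdots \dot\cup X_l$ (where $X_i$ is the standard block of size $n_i$ on which $S_{n_i}$ acts), we have
\[
F\wr(S_{n_{1}}\times\cdots\times S_{n_{l}}) \cong F\wr S_{n_{1}}\times\cdots\times F\wr S_{n_{l}},
\]
so the two subgroups of $F\wr S_n$ appearing in the two inductions are literally the same group under this identification. Hence it suffices to exhibit an isomorphism
\[
\widetilde{U_{{\bf n}}}\otimes\overline{S^{\Lambda}} \;\cong\; \Phi_{\lambda_{1}}^{1}\boxtimes\cdots\boxtimes\Phi_{\lambda_{l}}^{l}
\]
of representations of this common group, and then apply $\Ind_{\cdot}^{F\wr S_n}$ to both sides.

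Next I would expand the right-hand side using $\Phi_{\lambda_{i}}^{i}=\widetilde{U_{i}^{\boxtimes n_{i}}}\otimes\overline{S^{\lambda_{i}}}$ and then apply three commutativity results already established in the excerpt. Iterated application of \lemref{CommutativityOfOuterAndInnerTensorProducts} lets me regroup
\[
\bigboxtimes_{i=1}^{l}\bigl(\widetilde{U_{i}^{\boxtimes n_{i}}}\otimes\overline{S^{\lambda_{i}}}\bigr)
\;\cong\;\Bigl(\bigboxtimes_{i=1}^{l}\widetilde{U_{i}^{\boxtimes n_{i}}}\Bigr)\otimes\Bigl(\bigboxtimes_{i=1}^{l}\overline{S^{\lambda_{i}}}\Bigr).
\]
Then \lemref{CommutativityOfExtensionAndTensorProduct} (applied inductively across the $l$ blocks) gives $\bigboxtimes_i\widetilde{U_{i}^{\boxtimes n_{i}}} \cong \widetilde{U_{1}^{\boxtimes n_{1}}\boxtimes\cdots\boxtimes U_{l}^{\boxtimes n_{l}}} = \widetilde{U_{{\bf n}}}$, and \lemref{CommutativityOfInflationAndTensorProduct} gives $\bigboxtimes_i\overline{S^{\lambda_{i}}} \cong \overline{S^{\lambda_{1}}\boxtimes\cdots\boxtimes S^{\lambda_{l}}} = \overline{S^{\Lambda}}$. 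Combining these yields exactly $\widetilde{U_{{\bf n}}}\otimes\overline{S^{\Lambda}}$.

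Finally, inducing up to $F\wr S_n$ and using the definition of $\Phi_\Lambda$ gives the claimed isomorphism. The argument is essentially formal once the three commutativity lemmas are in place, so there is no real obstacle; the only mild care needed is in verifying that the commutativity of extension (which requires the factors to live over disjoint sets and each factor to be stabilized by the corresponding wreath subgroup) is legitimately applied here. This is clear because $U_i^{\boxtimes n_i}$ is stabilized by $F\wr S_{n_i}$ (all tensor factors are identical $U_i$'s), and the underlying sets $X_1,\ldots,X_l$ are disjoint by construction.
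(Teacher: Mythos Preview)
Your proposal is correct and follows essentially the same argument as the paper's own proof: both reduce the claim to an isomorphism $\widetilde{U_{{\bf n}}}\otimes\overline{S^{\Lambda}} \cong \Phi_{\lambda_{1}}^{1}\boxtimes\cdots\boxtimes\Phi_{\lambda_{l}}^{l}$ over the Young subgroup and establish it using exactly the three commutativity lemmas (\lemref{CommutativityOfOuterAndInnerTensorProducts}, \lemref{CommutativityOfExtensionAndTensorProduct}, \lemref{CommutativityOfInflationAndTensorProduct}). The only cosmetic difference is direction: the paper starts from the definition of $\Phi_{\Lambda}$ and unfolds outward, whereas you start from $\Phi_{\lambda_{1}}^{1}\boxtimes\cdots\boxtimes\Phi_{\lambda_{l}}^{l}$ and fold inward.
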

\begin{proof}
By definition 
\[
\Phi_{\Lambda}=\Ind_{F\wr(S_{n_{1}}\times\cdots\times S_{n_{l})}}^{F\wr S_{n}}(\widetilde{U_{{\bf n}}}\otimes\overline{S^{\Lambda}})
\]
if we write this more explicitly we get 
\[
\Phi_{\Lambda}=\Ind_{F\wr S_{n_{1}}\times\cdots\times F\wr S_{n_{l}}}^{F\wr S_{n}}(\widetilde{U_{1}^{\boxtimes n_{1}}\boxtimes\cdots\boxtimes U_{l}^{\boxtimes n_{l}}})\otimes(\overline{S^{\lambda_{1}}\boxtimes\cdots\boxtimes S^{\lambda_{l}}})
\]
Using \lemref{CommutativityOfInflationAndTensorProduct} and \lemref{CommutativityOfExtensionAndTensorProduct}
this equals: 
\[
\Ind_{F\wr S_{n_{1}}\times\cdots\times F\wr S_{n_{l}}}^{F\wr S_{n}}(\widetilde{U_{1}^{\boxtimes n_{1}}}\boxtimes\cdots\boxtimes\widetilde{U_{l}^{\boxtimes n_{l}}})\otimes(\overline{S^{\lambda_{1}}}\boxtimes\cdots\boxtimes\overline{S^{\lambda_{l}}}).
\]
Now, using \lemref{CommutativityOfOuterAndInnerTensorProducts} this
equals: 
\[
\Ind_{F\wr S_{n_{1}}\times\cdots\times F\wr S_{n_{l}}}^{F\wr S_{n}}(\widetilde{U_{1}^{\boxtimes n_{1}}}\otimes\overline{S^{\lambda_{1}}})\boxtimes\cdots\boxtimes(\widetilde{U_{l}^{\boxtimes n_{l}}}\otimes\overline{S^{\lambda_{l}}})
\]
which is precisely 
\[
\Ind_{F\wr S_{n_{1}}\times\cdots\times F\wr S_{n_{l}}}^{F\wr S_{n}}(\Phi_{\lambda_{1}}^{1}\boxtimes\cdots\boxtimes\Phi_{\lambda_{l}}^{l})
\]
as required. 
\end{proof}

\section{Littlewood-Richardson rule for $F\wr S_{n}$}

In this section we generalize the standard Littlewood-Richardson rule
for $S_{n}$ to the case of $F\wr S_{n}$. As mentioned above this
is a new proof for \cite[Theorem 4.7]{Ingram2009}. Given two integers
$k$ and $r$ and two integer compositions 
\[
{\bf k}=(k_{1},\ldots,k_{l}),\quad\sum_{i=1}^{l}k_{i}=k
\]
\[
{\bf r}=(r_{1},\ldots,r_{l}),\quad\sum_{i=1}^{l}r_{i}=r
\]

let $\Lambda=(\lambda_{1},\ldots,\lambda_{l})$ and $\Delta=(\delta_{1},\ldots,\delta_{l})$
be multipartitions of ${\bf k}$ and ${\bf r}$ respectively. We want
to find the decomposition of 
\[
\Ind_{F\wr S_{k}\times F\wr S_{r}}^{F\wr S_{k+r}}(\Phi_{\Lambda}\boxtimes\Phi_{\Delta})
\]
into irreducible representations. In other words, if we write 
\[
\Ind_{F\wr S_{k}\times F\wr S_{r}}^{F\wr S_{k+r}}(\Phi_{\Lambda}\boxtimes\Phi_{\Delta})=\bigoplus_{{\bf {n}}}\bigoplus_{\Gamma\Vdash{\bf {n}}}C_{\Lambda,\Delta}^{\Gamma}\Phi_{\Gamma}
\]
where the outer sum is over all integer compositions ${\bf {n}}$
of $k+r$, we want to find the coefficients $C_{\Lambda,\Delta}^{\Gamma}$.
We start with a specific case.
\begin{prop}
\label{prop:LittlewoodRicharsonRuleSpecialCase} Let $\lambda\vdash k$
and $\delta\vdash r$ then

\[
\Ind_{F\wr S_{k}\times F\wr S_{r}}^{F\wr S_{k+r}}(\Phi_{\lambda}^{i}\boxtimes\Phi_{\delta}^{i})=\bigoplus_{\gamma\vdash(k+r)}c_{\lambda,\delta}^{\gamma}\Phi_{\gamma}^{i}
\]
where $c_{\lambda,\delta}^{\gamma}$ is the Littlewood-Richardson
coefficient. 
\end{prop}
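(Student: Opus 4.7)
The plan is to reduce this special case to the classical Littlewood--Richardson rule by carefully unpacking the definitions and then using two standard but essential formulas: the projection (push--pull) formula for induction, and the fact that inflation commutes with induction through a common normal subgroup. All ingredients are already prepared in the preceding subsection.

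First, I would unpack $\Phi_\lambda^i\boxtimes\Phi_\delta^i$. By definition this is $(\widetilde{U_i^{\boxtimes k}}\otimes\overline{S^\lambda})\boxtimes(\widetilde{U_i^{\boxtimes r}}\otimes\overline{S^\delta})$. Using \lemref{CommutativityOfOuterAndInnerTensorProducts} I can regroup this as $(\widetilde{U_i^{\boxtimes k}}\boxtimes\widetilde{U_i^{\boxtimes r}})\otimes(\overline{S^\lambda}\boxtimes\overline{S^\delta})$. The right factor, by \lemref{CommutativityOfInflationAndTensorProduct}, is the inflation $\overline{S^\lambda\boxtimes S^\delta}$. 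The left factor, by \lemref{CommutativityOfExtensionAndTensorProduct}, equals $\Ex_{S_k\times S_r}(U_i^{\boxtimes(k+r)})$, and since $U_i^{\boxtimes(k+r)}$ is stabilized by all of $F\wr S_{k+r}$, \remref{ExtensionRestrictionRemark} identifies this in turn with $\Res_{F\wr(S_k\times S_r)}^{F\wr S_{k+r}}\bigl(\widetilde{U_i^{\boxtimes(k+r)}}\bigr)$. Setting $H=F\wr(S_k\times S_r)=(F\wr S_k)\times(F\wr S_r)$ and $G=F\wr S_{k+r}$, I have therefore written
\[
\Phi_\lambda^i\boxtimes\Phi_\delta^i \;\cong\; \Res_H^G\bigl(\widetilde{U_i^{\boxtimes(k+r)}}\bigr)\otimes \overline{S^\lambda\boxtimes S^\delta}
\]
as $H$-representations.

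Next, I apply the projection formula $\Ind_H^G\bigl(\Res_H^G(M)\otimes N\bigr)\cong M\otimes\Ind_H^G(N)$ to pull $\widetilde{U_i^{\boxtimes(k+r)}}$ out of the induction, yielding
\[
\Ind_H^G(\Phi_\lambda^i\boxtimes\Phi_\delta^i)\;\cong\; \widetilde{U_i^{\boxtimes(k+r)}}\otimes\Ind_H^G\bigl(\overline{S^\lambda\boxtimes S^\delta}\bigr).
\]
Now I use the fact that inflation commutes with induction through the common normal subgroup $F^{k+r}$ (here $F^{k+r}\trianglelefteq H\trianglelefteq G$ with $G/F^{k+r}=S_{k+r}$ and $H/F^{k+r}=S_k\times S_r$), giving $\Ind_H^G\bigl(\overline{S^\lambda\boxtimes S^\delta}\bigr)\cong\overline{\Ind_{S_k\times S_r}^{S_{k+r}}(S^\lambda\boxtimes S^\delta)}$. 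Applying the classical Littlewood--Richardson rule (\thmref{Littlewood-Richardson rule}) decomposes this as $\bigoplus_{\gamma\vdash(k+r)}c^\gamma_{\lambda,\delta}\,\overline{S^\gamma}$. Tensoring back by $\widetilde{U_i^{\boxtimes(k+r)}}$ and recognizing each summand $\widetilde{U_i^{\boxtimes(k+r)}}\otimes\overline{S^\gamma}$ as $\Phi_\gamma^i$ delivers the claimed decomposition.

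The main obstacle is not any one computation but making sure each of the two structural identities — the projection formula and the inflation/induction interchange — is either cited from the literature or, if not already in the author's conventions, verified in a brief lemma preceding the proof; once those are in hand, everything else is a short chain of the commutativity lemmas already established. I would probably state both facts as lemmas (or a single remark) in the preliminaries on induction/restriction so the proof of \propref{LittlewoodRicharsonRuleSpecialCase} reduces to the transparent four-line calculation above.
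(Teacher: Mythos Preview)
Your proposal is correct and follows essentially the same route as the paper's proof, including your anticipation that the projection formula and the inflation--induction interchange should be isolated as preparatory lemmas (these are precisely \lemref{InductionAndRestrictionRelation}, \lemref{InflationAndInductionCommute}, and their combination \lemref{InductionrestrictionLemmaWithInflation} in the paper). One small slip in your parenthetical: $H=F\wr(S_k\times S_r)$ is not normal in $G=F\wr S_{k+r}$, but you only need that $F^{k+r}$ is normal in both $H$ and $G$, which is exactly the hypothesis of \lemref{InflationAndInductionCommute}.
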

Before proving this results we need some lemmas. 
\begin{lem}
\label{lem:InflationAndInductionCommute}Let $H\leq G$ be a subgroup
of $G$. Let $U$ be an $H$-representation then 
\[
\Ind_{F\wr_{X}H}^{F\wr_{X}G}\overline{U}\cong\overline{\Ind_{H}^{G}U}
\]
as ($F\wr_{X}G$)-representations.\end{lem}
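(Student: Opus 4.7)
My plan is to exhibit an explicit isomorphism using a common transversal. Let $S=\{s_{1},\ldots,s_{l}\}$ be a set of left coset representatives for $H$ in $G$, so $l=[G:H]$. The first preparatory step is to check that $\{(e,s_{1}),\ldots,(e,s_{l})\}$, where $e$ denotes the identity of $F^{X}$, is a set of left coset representatives for $F\wr_{X}H$ in $F\wr_{X}G$. Using the wreath product multiplication, $(e,s_{i})\cdot(f',h)=(s_{i}\ast f',s_{i}h)$, so given any $(f,g)\in F\wr_{X}G$ there is a unique $s_{i}\in S$ with $s_{i}H=gH$, and then $h=s_{i}^{-1}g\in H$ and $f'=s_{i}^{-1}\ast f$ are determined uniquely. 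In particular $[F\wr_{X}G:F\wr_{X}H]=[G:H]=l$.

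Next, I would use the concrete description of induction from the preliminaries. Both $\Ind_{F\wr_{X}H}^{F\wr_{X}G}\overline{U}$ and $\Ind_{H}^{G}U$ are, as vector spaces, equal to $\bigoplus_{i=1}^{l}U$ (since $\overline{U}=U$ as a vector space), and inflation preserves the underlying vector space, so $\overline{\Ind_{H}^{G}U}$ has the same underlying vector space too. Define $T$ to be the identity map on these vector spaces, sending the basis vector $((e,s_{i}),u)$ of $\Ind_{F\wr_{X}H}^{F\wr_{X}G}\overline{U}$ to the basis vector $(s_{i},u)$ of $\overline{\Ind_{H}^{G}U}$.

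The substantive step is to verify that $T$ intertwines the $F\wr_{X}G$-actions. On $\overline{\Ind_{H}^{G}U}$, the element $(f,g)$ acts as $g$, so it sends $(s_{i},u)$ to $(s_{j},h\cdot u)$ where $gs_{i}=s_{j}h$ with $s_{j}\in S$ and $h\in H$. On $\Ind_{F\wr_{X}H}^{F\wr_{X}G}\overline{U}$, one computes $(f,g)(e,s_{i})=(f,gs_{i})$, and writing this as $(e,s_{j})(f'',h)$ forces $gs_{i}=s_{j}h$ (the same $s_{j}$ and $h$ as before) and $f''=s_{j}^{-1}\ast f$. By the definition of inflation, $F^{X}$ acts trivially on $\overline{U}$, so $(f'',h)\cdot u=h\cdot u$, and the induced action yields $((e,s_{j}),h\cdot u)$. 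Applying $T$ gives $(s_{j},h\cdot u)$, matching the other side exactly. The one mildly delicate point is keeping track that the transversal for $H$ in $G$ lifts to a transversal for $F\wr_{X}H$ in $F\wr_{X}G$ via $g\mapsto(e,g)$; once that is established, the remainder is bookkeeping against the semidirect product rule together with the triviality of $F^{X}$ on the inflated module.
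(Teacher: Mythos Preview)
Your proof is correct and follows essentially the same approach as the paper: lift a transversal $\{s_i\}$ for $H$ in $G$ to the transversal $\{(\mathbf{1}_F,s_i)\}$ for $F\wr_X H$ in $F\wr_X G$, take the identity on underlying vector spaces as the isomorphism, and verify equivariance using that $(f,g)(\mathbf{1}_F,s_i)=(\mathbf{1}_F,s_j)(s_j^{-1}\ast f,h)$ together with the fact that $F^X$ acts trivially on $\overline{U}$. The only cosmetic differences are that the paper writes the map in the opposite direction and uses $\mathbf{1}_F$ for your $e$.
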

\begin{proof}
Let $s_{1},\ldots,s_{l}$ be representatives of the $H$ cosets in
$G$. Note that $({\bf 1}_{F},s_{i})$ for $i=1,\ldots,l$ are representatives
for the $F\wr_{X}H$ cosets in $F\wr_{X}G$ where ${\bf 1}_{F}$ is
the constant function ${\bf 1}_{F}(x)=1_{F}$. Now, define 
\[
T:\overline{\Ind_{H}^{G}U}\to\Ind_{F\wr_{X}H}^{F\wr_{X}G}\overline{U}
\]
by 
\[
T((s_{i},u))=(({\bf 1}_{F},s_{i}),u)
\]
and extending linearly. Clearly, $T$ is a vector space isomorphism.
Now, given $(f,g)\in F\wr_{X}G$ 
\begin{align*}
T((f,g)\cdot(s_{i},u)) & =T(g(s_{i},u))\\
 & =T((s_{j},hu))\\
 & =(({\bf 1}_{F},s_{j}),hu)
\end{align*}
assuming that $gs_{i}=s_{j}h$ for $h\in H$. Note that 
\begin{align*}
(f,g)({\bf 1}_{F},s_{i}) & =(f(g\ast{\bf 1}_{F}),gs_{i})=(f{\bf 1}_{F},gs_{i})=\\
 & =(f,s_{j}h)=({\bf 1}_{F},s_{j})(s_{j}^{-1}\ast f,h)
\end{align*}
hence 
\begin{align*}
(f,g)\cdot T((s_{i},u)) & =(f,g)\cdot(({\bf 1}_{F},s_{i}),u)\\
 & =(({\bf 1}_{F},s_{j}),(s_{j}^{-1}\ast f,h)\cdot u)\\
 & =(({\bf 1}_{F},s_{j}),hu)
\end{align*}
so 
\[
T((f,g)\cdot(s_{i},u))=(f,g)\cdot T((s_{i},u))
\]

as required.\end{proof}
\begin{lem}[{{\cite[Proposition 1.1.15]{Tullio2014}}}]
\label{lem:InductionAndRestrictionRelation}Assume $H\leq G$ and
let $U$ ($V$) be a $G$ (respectively $H$)-representation. Then
\[
\Ind_{H}^{G}(\Res_{H}^{G}(U)\otimes V)\cong U\otimes\Ind_{H}^{G}V.
\]

\end{lem}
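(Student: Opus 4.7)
The plan is to give an explicit $G$-equivariant isomorphism between the two representations, using the concrete coset description of the induced representation introduced earlier in the preliminaries. Let $S=\{s_{1},\ldots,s_{l}\}$ be a transversal of the left cosets of $H$ in $G$, so that every element of $\Ind_{H}^{G}V$ is a linear combination of elements of the form $(s_{i},v)$, and similarly every element of $\Ind_{H}^{G}(\Res_{H}^{G}U\otimes V)$ is a linear combination of elements $(s_{i},u\otimes v)$. On the other side, $U\otimes\Ind_{H}^{G}V$ is spanned by tensors $u\otimes(s_{i},v)$. Comparing dimensions, both sides have dimension $[G:H]\cdot\dim U\cdot\dim V$, so any well-defined linear bijection will be an isomorphism of vector spaces and I only need to then check $G$-equivariance.

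First I would define
\[
T:\Ind_{H}^{G}(\Res_{H}^{G}U\otimes V)\longrightarrow U\otimes\Ind_{H}^{G}V,\qquad T\bigl((s_{i},u\otimes v)\bigr)=(s_{i}\cdot u)\otimes(s_{i},v),
\]
extended linearly. The inverse is the obvious map $u\otimes(s_{i},v)\mapsto(s_{i},s_{i}^{-1}u\otimes v)$, which shows $T$ is a vector space isomorphism. Note that the presence of $s_{i}\cdot u$ is sensible because $U$ is a $G$-representation, even though $\Res_{H}^{G}U$ only remembers the $H$-structure inside the induced module; this ``twist'' by $s_{i}$ is precisely what is needed to absorb the $G$-action.

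The key step is verifying equivariance. Fix $g\in G$ and write $gs_{i}=s_{j}h$ with $h\in H$ (unique by the transversal). On the left-hand side,
\[
T\bigl(g\cdot(s_{i},u\otimes v)\bigr)=T\bigl((s_{j},h\cdot(u\otimes v))\bigr)=T\bigl((s_{j},hu\otimes hv)\bigr)=(s_{j}hu)\otimes(s_{j},hv)=(gs_{i}u)\otimes(s_{j},hv),
\]
where in the second equality I use that the tensor product action on $\Res_{H}^{G}U\otimes V$ is diagonal, which is where the restricted $H$-action on $U$ intervenes. On the right-hand side,
\[
g\cdot T\bigl((s_{i},u\otimes v)\bigr)=g\cdot\bigl((s_{i}u)\otimes(s_{i},v)\bigr)=(gs_{i}u)\otimes\bigl(g\cdot(s_{i},v)\bigr)=(gs_{i}u)\otimes(s_{j},hv),
\]
and the two expressions agree.

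The only genuine obstacle is conceptual rather than computational: one must remember that $\Res_{H}^{G}U$ refers to the same underlying vector space as $U$ but with the $G$-action forgotten, so the formula $T((s_{i},u\otimes v))=(s_{i}u)\otimes(s_{i},v)$ is not a definition \emph{within} $\Res_{H}^{G}U$ but rather uses the ambient $G$-action on $U$. Once this is kept straight, the verification above is mechanical, and the result follows.
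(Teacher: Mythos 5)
Your proof is correct. The paper does not prove this lemma at all --- it simply cites \cite[Proposition 1.1.15]{Tullio2014} --- and your explicit transversal argument is the standard proof of this ``tensor identity'': the map $(s_{i},u\otimes v)\mapsto (s_{i}u)\otimes(s_{i},v)$ is well defined (bilinear in $u,v$), has the evident inverse, and your equivariance check, including the observation that the twist by $s_{i}$ uses the ambient $G$-action on $U$ rather than only the restricted $H$-action, is exactly right.
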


\begin{lem}
\label{lem:InductionrestrictionLemmaWithInflation}Let $V$ be an
$(F\wr_{X}G)$-representation. Let $H\leq G$ and let $W$ be some
$H$-representation.

Then 
\[
\Ind_{F\wr_{X}H}^{F\wr_{X}G}(\Res_{F\wr_{X}H}^{F\wr_{X}G}(V)\otimes\overline{W})\cong V\otimes\overline{\Ind_{H}^{G}W}.
\]
\end{lem}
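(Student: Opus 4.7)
The plan is to reduce this to two results already in hand: the projection formula (\lemref{InductionAndRestrictionRelation}) and the compatibility of inflation with induction (\lemref{InflationAndInductionCommute}).

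First, I would observe that since $H\leq G$, we have $F\wr_{X}H\leq F\wr_{X}G$ as subgroups, so the hypotheses of \lemref{InductionAndRestrictionRelation} are met with $G$ replaced by $F\wr_{X}G$, $H$ replaced by $F\wr_{X}H$, the $(F\wr_{X}G)$-representation being $V$, and the $(F\wr_{X}H)$-representation being $\overline{W}$ (the inflation of $W$ along $F\wr_{X}H\twoheadrightarrow H$). Applying the lemma directly gives
\[
\Ind_{F\wr_{X}H}^{F\wr_{X}G}\bigl(\Res_{F\wr_{X}H}^{F\wr_{X}G}(V)\otimes\overline{W}\bigr)\cong V\otimes\Ind_{F\wr_{X}H}^{F\wr_{X}G}\overline{W}.
\]

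Next, I would apply \lemref{InflationAndInductionCommute} to the right-hand factor to obtain
\[
\Ind_{F\wr_{X}H}^{F\wr_{X}G}\overline{W}\cong\overline{\Ind_{H}^{G}W}
\]
as $(F\wr_{X}G)$-representations. Chaining this isomorphism into the previous one yields the desired
\[
\Ind_{F\wr_{X}H}^{F\wr_{X}G}\bigl(\Res_{F\wr_{X}H}^{F\wr_{X}G}(V)\otimes\overline{W}\bigr)\cong V\otimes\overline{\Ind_{H}^{G}W}.
\]

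There is essentially no real obstacle here; the lemma is a formal corollary. The only thing to be careful about is that the isomorphism from \lemref{InductionAndRestrictionRelation} is an isomorphism of $(F\wr_{X}G)$-representations (not only of vector spaces), so that tensoring with $V$ on the outside and then replacing the induced-inflation with the inflated-induction preserves the $(F\wr_{X}G)$-action throughout. This follows because both cited lemmas are stated as representation isomorphisms, and the intertwining constructions compose functorially.
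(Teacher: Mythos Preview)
Your proof is correct and follows exactly the paper's approach: the paper's own proof is the one-line ``Apply \lemref{InductionAndRestrictionRelation} and \lemref{InflationAndInductionCommute},'' which is precisely what you spell out.
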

\begin{proof}
Apply \lemref{InductionAndRestrictionRelation} and \lemref{InflationAndInductionCommute}. 
\end{proof}

\begin{proof}[Proof of \propref{LittlewoodRicharsonRuleSpecialCase}]
According to the definition 
\[
\Ind_{F\wr S_{k}\times F\wr S_{r}}^{F\wr S_{k+r}}(\Phi_{\lambda}^{i}\boxtimes\Phi_{\delta}^{i})=\Ind_{F\wr S_{k}\times F\wr S_{r}}^{F\wr S_{k+r}}((\widetilde{U_{i}^{\boxtimes k}}\otimes\overline{S^{\lambda}})\boxtimes(\widetilde{U_{i}^{\boxtimes r}}\otimes\overline{S^{\delta}})).
\]
Using \lemref{CommutativityOfOuterAndInnerTensorProducts} this equals
\[
\Ind_{F\wr S_{k}\times F\wr S_{r}}^{F\wr S_{k+r}}((\widetilde{U_{i}^{\boxtimes k}}\boxtimes\widetilde{U_{i}^{\boxtimes r}})\otimes(\overline{S^{\lambda}}\boxtimes\overline{S^{\delta}}))
\]
and by \lemref{CommutativityOfInflationAndTensorProduct} and \lemref{CommutativityOfExtensionAndTensorProduct}
this equals 
\[
\Ind_{F\wr S_{k}\times F\wr S_{r}}^{F\wr S_{k+r}}((\widetilde{U_{i}^{\boxtimes(k+r)}})\otimes(\overline{S^{\lambda}\boxtimes S^{\delta}})).
\]
If we use more precise notation, this is actually 
\[
\Ind_{F\wr S_{k}\times F\wr S_{r}}^{F\wr S_{k+r}}((\Ex_{S_{k}\times S_{r}}U_{i}^{\boxtimes(k+r)})\otimes(\overline{S^{\lambda}\boxtimes S^{\delta}}))
\]
but according to \remref{ExtensionRestrictionRemark} this equals
\[
\Ind_{F\wr S_{k}\times F\wr S_{r}}^{F\wr S_{k+r}}((\Res_{F\wr(S_{k}\times S_{r})}^{F\wr S_{k+r}}\Ex_{S_{k+r}}U_{i}^{\boxtimes(k+r)})\otimes(\overline{S^{\lambda}\boxtimes S^{\delta}})).
\]
Returning to imprecise notation, this is 
\[
\Ind_{F\wr S_{k}\times F\wr S_{r}}^{F\wr S_{k+r}}((\Res_{F\wr S_{k}\times F\wr S_{r}}^{F\wr S_{k+r}}(\widetilde{U_{i}^{\boxtimes(k+r)}}))\otimes(\overline{S^{\lambda}\boxtimes S^{\delta}}))
\]
where now 
\[
\widetilde{U_{i}^{\boxtimes(k+r)}}
\]
is an ($F\wr S_{k+r}$)-representation.

By \lemref{InductionrestrictionLemmaWithInflation} this equals 
\[
\widetilde{U_{i}^{\boxtimes(k+r)}}\otimes\overline{\Ind_{S_{k}\times S_{r}}^{S_{k+r}}(S^{\lambda}\boxtimes S^{\delta})}
\]
but according to the standard Littlewood-Richardson rule 
\[
\Ind_{S_{k}\times S_{r}}^{S_{k+r}}(S^{\lambda}\boxtimes S^{\delta})=\sum_{\gamma\vdash(k+r)}c_{\lambda,\delta}^{\gamma}S^{\gamma}.
\]
Hence our representation equals 
\[
\widetilde{U_{i}^{\boxtimes(k+r)}}\otimes\overline{\sum_{\gamma\vdash(k+r)}c_{\lambda,\delta}^{\gamma}S^{\gamma}}=\sum_{\gamma\vdash(k+r)}c_{\lambda,\delta}^{\gamma}(\widetilde{U_{i}^{\boxtimes(k+r)}}\otimes\overline{S^{\gamma}})=\sum_{\gamma\vdash(k+r)}c_{\lambda,\delta}^{\gamma}\Phi_{\gamma}^{i}
\]
as required. 
\end{proof}
Now we turn to the general case. Let ${\bf k}=(k_{1},\ldots,k_{l})$
and ${\bf r}=(r_{1},\ldots,r_{l})$ be integers compositions of $k$
and $r$ respectively. We denote ${\bf k+r}=(k_{1}+r_{1},\ldots,k_{l}+r_{l})$,
an integer composition of $k+r$. 
\begin{thm}
\label{thm:LittlewoodRichardsonRuleForWreathProduct}Let $\Lambda=(\lambda_{1},\ldots,\lambda_{l})\Vdash{\bf k}$
and $\Delta=(\delta_{1},\ldots,\delta_{l})\Vdash{\bf r}$ then 
\[
\Ind_{F\wr S_{k}\times F\wr S_{r}}^{F\wr S_{k+r}}(\Phi_{\Lambda}\boxtimes\Phi_{\Delta})=\bigoplus_{\Gamma\Vdash({\bf k}+{\bf r})}C_{\Lambda,\Delta}^{\Gamma}\Phi_{\Gamma}
\]
where 
\[
C_{\Lambda,\Delta}^{\Gamma}=\prod_{i=1}^{l}c_{\lambda_{i},\delta_{i}}^{\gamma_{i}}.
\]
\end{thm}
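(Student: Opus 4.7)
The plan is to reduce the statement to Proposition \ref{prop:LittlewoodRicharsonRuleSpecialCase} by decomposing both $\Phi_\Lambda$ and $\Phi_\Delta$ into single-index blocks via Proposition \ref{prop:RecursionForIrreducibleRep}, and then handling each block separately.

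First, by Proposition \ref{prop:RecursionForIrreducibleRep} I would write
\[
\Phi_\Lambda \cong \Ind_{F\wr S_{k_1}\times\cdots\times F\wr S_{k_l}}^{F\wr S_k}(\Phi_{\lambda_1}^1 \boxtimes \cdots \boxtimes \Phi_{\lambda_l}^l),
\]
and similarly for $\Phi_\Delta$ with the $r_i$'s. Using the standard identity $\Ind_{H_1}^{G_1}V_1\boxtimes\Ind_{H_2}^{G_2}V_2\cong\Ind_{H_1\times H_2}^{G_1\times G_2}(V_1\boxtimes V_2)$ together with transitivity of induction, $\Ind_{F\wr S_k\times F\wr S_r}^{F\wr S_{k+r}}(\Phi_\Lambda\boxtimes\Phi_\Delta)$ becomes
\[
\Ind_{H}^{F\wr S_{k+r}}\bigl(\Phi_{\lambda_1}^1\boxtimes\cdots\boxtimes\Phi_{\lambda_l}^l\boxtimes\Phi_{\delta_1}^1\boxtimes\cdots\boxtimes\Phi_{\delta_l}^l\bigr),
\]
where $H := F\wr S_{k_1}\times\cdots\times F\wr S_{k_l}\times F\wr S_{r_1}\times\cdots\times F\wr S_{r_l}$.

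Next I would reorder the outer tensor factors so that, for every $i$, the factor $\Phi_{\lambda_i}^i$ sits adjacent to $\Phi_{\delta_i}^i$; this corresponds to the canonical isomorphism between $H$ and the regrouped product $H' := (F\wr S_{k_1}\times F\wr S_{r_1})\times\cdots\times(F\wr S_{k_l}\times F\wr S_{r_l})$. Choosing the embedding of $K := F\wr S_{k_1+r_1}\times\cdots\times F\wr S_{k_l+r_l}$ into $F\wr S_{k+r}$ so that $F\wr S_{k_i+r_i}$ acts on the union of the coordinate blocks used by $F\wr S_{k_i}$ and $F\wr S_{r_i}$, I obtain $H'\subseteq K\subseteq F\wr S_{k+r}$. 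Transitivity of induction lets me factor the induction through $K$, and the inner step splits as an outer tensor product over $i$ of
\[
\Ind_{F\wr S_{k_i}\times F\wr S_{r_i}}^{F\wr S_{k_i+r_i}}(\Phi_{\lambda_i}^i\boxtimes\Phi_{\delta_i}^i) \cong \bigoplus_{\gamma_i\vdash k_i+r_i} c_{\lambda_i,\delta_i}^{\gamma_i}\,\Phi_{\gamma_i}^i,
\]
by Proposition \ref{prop:LittlewoodRicharsonRuleSpecialCase}. Distributing $\boxtimes$ over the direct sums yields the $K$-representation
\[
\bigoplus_{\Gamma\Vdash(\mathbf{k}+\mathbf{r})}\Bigl(\prod_{i=1}^{l}c_{\lambda_i,\delta_i}^{\gamma_i}\Bigr)\,\Phi_{\gamma_1}^1\boxtimes\cdots\boxtimes\Phi_{\gamma_l}^l,
\]
and inducing up from $K$ to $F\wr S_{k+r}$ while invoking Proposition \ref{prop:RecursionForIrreducibleRep} in the reverse direction identifies each summand $\Phi_{\gamma_1}^1\boxtimes\cdots\boxtimes\Phi_{\gamma_l}^l$ with $\Phi_\Gamma$, giving the claimed decomposition.

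The main obstacle is purely bookkeeping: the substantive input is already contained in Proposition \ref{prop:LittlewoodRicharsonRuleSpecialCase}, so what remains is tracking the embeddings of several nested wreath-product subgroups and justifying the reorganization of outer tensor factors. Once the chain $H'\subseteq K\subseteq F\wr S_{k+r}$ is set up carefully, the computation follows formally from transitivity of induction and the compatibility of induction with $\boxtimes$.
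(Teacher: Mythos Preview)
Your proposal is correct and follows essentially the same route as the paper: decompose $\Phi_\Lambda$ and $\Phi_\Delta$ via \propref{RecursionForIrreducibleRep}, combine and collapse the inductions using transitivity and the compatibility of induction with $\boxtimes$ (the paper's \lemref{CommutativityOfInductionAndOuterTensorProduct}), regroup the factors so that the $i$-th blocks sit together, factor through $K=F\wr S_{k_1+r_1}\times\cdots\times F\wr S_{k_l+r_l}$, apply \propref{LittlewoodRicharsonRuleSpecialCase} coordinatewise, and then reassemble using \propref{RecursionForIrreducibleRep} in reverse. The only difference is cosmetic: you phrase the regrouping in terms of an explicit isomorphism $H\cong H'$ and a chosen embedding of $K$, whereas the paper simply ``rearranges'' without comment.
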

\begin{rem}
Note that ${\bf {k+r}}$ is the only composition of $k+r$ occurring
in this summation. 
\end{rem}
Before proving this result we need another technical lemma about induction. 
\begin{lem}
\label{lem:CommutativityOfInductionAndOuterTensorProduct}Assume $H_{1}\leq G_{1}$
($H_{2}\leq G_{2}$) and let $U_{1}$ (respectively, $U_{2}$) be
a representation of $H_{1}$ (respectively, $H_{2}$). Then 
\[
\Ind_{H_{1}\times H_{2}}^{G_{1}\times G_{2}}(U_{1}\boxtimes U_{2})\cong\Ind_{H_{1}}^{G_{1}}U_{1}\boxtimes\Ind_{H_{2}}^{G_{2}}U_{2}.
\]
\end{lem}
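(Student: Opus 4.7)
The plan is to construct an explicit isomorphism using the concrete description of induction via coset representatives given earlier in the preliminaries. Choose $S_1=\{s_1^{(1)},\ldots,s_m^{(1)}\}$ as left coset representatives of $H_1$ in $G_1$ and $S_2=\{s_1^{(2)},\ldots,s_n^{(2)}\}$ as left coset representatives of $H_2$ in $G_2$. Then the products $\{(s_i^{(1)},s_j^{(2)})\}_{i,j}$ form a set of coset representatives of $H_1\times H_2$ in $G_1\times G_2$, since any $(g_1,g_2)\in G_1\times G_2$ can be uniquely written as $(s_i^{(1)}h_1,s_j^{(2)}h_2)=(s_i^{(1)},s_j^{(2)})(h_1,h_2)$.

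With these choices in hand, I would define a linear map
\[
T:\Ind_{H_{1}\times H_{2}}^{G_{1}\times G_{2}}(U_{1}\boxtimes U_{2})\longrightarrow\Ind_{H_{1}}^{G_{1}}U_{1}\boxtimes\Ind_{H_{2}}^{G_{2}}U_{2}
\]
on simple tensors by
\[
T\bigl((s_i^{(1)},s_j^{(2)}),\,u_1\boxtimes u_2\bigr)=(s_i^{(1)},u_1)\boxtimes(s_j^{(2)},u_2)
\]
and extend linearly. Both sides have dimension $[G_1:H_1][G_2:H_2]\dim U_1\dim U_2$, and $T$ sends the chosen basis of one to the chosen basis of the other, so it is a vector space isomorphism.

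The only thing to verify is equivariance. Given $(g_1,g_2)\in G_1\times G_2$, write $g_1s_i^{(1)}=s_{i'}^{(1)}h_1$ and $g_2s_j^{(2)}=s_{j'}^{(2)}h_2$ with $h_1\in H_1$, $h_2\in H_2$. Then $(g_1,g_2)(s_i^{(1)},s_j^{(2)})=(s_{i'}^{(1)},s_{j'}^{(2)})(h_1,h_2)$, so on one hand
\[
T\bigl((g_1,g_2)\cdot((s_i^{(1)},s_j^{(2)}),u_1\boxtimes u_2)\bigr)=T\bigl((s_{i'}^{(1)},s_{j'}^{(2)}),h_1u_1\boxtimes h_2u_2\bigr)=(s_{i'}^{(1)},h_1u_1)\boxtimes(s_{j'}^{(2)},h_2u_2),
\]
and on the other hand
\[
(g_1,g_2)\cdot\bigl((s_i^{(1)},u_1)\boxtimes(s_j^{(2)},u_2)\bigr)=(g_1\cdot(s_i^{(1)},u_1))\boxtimes(g_2\cdot(s_j^{(2)},u_2))=(s_{i'}^{(1)},h_1u_1)\boxtimes(s_{j'}^{(2)},h_2u_2).
\]
These agree, giving the required isomorphism. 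There is no real obstacle here; the lemma is essentially the assertion that induction from a product of subgroups is compatible with outer tensor products, and the proof is bookkeeping with coset representatives. An alternative one-line proof would invoke the identification $\mathbb{C}[G_1\times G_2]\cong\mathbb{C}G_1\otimes\mathbb{C}G_2$ of algebras (and similarly for $H_1\times H_2$) and then apply associativity of tensor product, but the coset-representative argument keeps us consistent with the concrete description of induction used throughout the paper.
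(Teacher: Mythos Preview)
Your proof is correct and is essentially the same argument as the paper's, just phrased in the coset-representative model of induction rather than the tensor-product model. The paper explicitly opts for the description $\Ind_H^G U=\mathbb{C}G\otimes_{\mathbb{C}H}U$ here, defining $T((s_1,s_2)\otimes(u_1\boxtimes u_2))=(s_1\otimes u_1)\boxtimes(s_2\otimes u_2)$ and checking equivariance in one line; this is precisely the alternative you sketch at the end of your write-up.
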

\begin{proof}
It is more convenient here to use the tensor product definition of
induced representation. Define a vector space isomorphism $T:\Ind_{H_{1}\times H_{2}}^{G_{1}\times G_{2}}(U_{1}\boxtimes U_{2})\to\Ind_{H_{1}}^{G_{1}}U_{1}\boxtimes\Ind_{H_{2}}^{G_{2}}U_{2}$
by 
\[
T((s_{1},s_{2})\otimes(u_{1}\boxtimes u_{2}))=(s_{1}\otimes u_{1})\boxtimes(s_{2}\otimes u_{2})
\]

and extending linearly. Now take some $(g_{1},g_{2})\in G_{1}\times G_{2}$
and note that 
\begin{align*}
T((g_{1},g_{2})\cdot((s_{1},s_{2})\otimes(u_{1}\boxtimes u_{2}))) & =T((g_{1}s_{1},g_{2}s_{2})\otimes(u_{1}\boxtimes u_{2}))\\
 & =(g_{1}s_{1}\otimes u_{1})\boxtimes(g_{2}s_{2}\otimes u_{2})\\
 & =(g_{1}\cdot(s_{1}\otimes u_{1}))\boxtimes(g_{2}\cdot(s_{2}\otimes u_{2}))\\
 & =(g_{1},g_{2})\cdot T((s_{1},s_{2})\otimes(u_{1}\boxtimes u_{2})).
\end{align*}
as required.
\end{proof}

\begin{proof}[Proof of \thmref{LittlewoodRichardsonRuleForWreathProduct}]
According to \propref{RecursionForIrreducibleRep}, the representation
\[
\Ind_{F\wr S_{k}\times F\wr S_{r}}^{F\wr S_{k+r}}(\Phi_{\Lambda}\boxtimes\Phi_{\Delta})
\]

equals

\[
\Ind_{F\wr S_{k}\times F\wr S_{r}}^{F\wr S_{k+r}}(\Ind_{F\wr S_{k_{1}}\times\cdots\times F\wr S_{k_{l}}}^{F\wr S_{k}}(\Phi_{\lambda_{1}}^{1}\boxtimes\cdots\boxtimes\Phi_{\lambda_{l}}^{l})\boxtimes\Ind_{F\wr S_{r_{1}}\times\cdots\times F\wr S_{r_{l}}}^{F\wr S_{r}}(\Phi_{\delta_{1}}^{1}\boxtimes\cdots\boxtimes\Phi_{\delta_{l}}^{l})).
\]

Using \lemref{CommutativityOfInductionAndOuterTensorProduct} this
equals 
\[
\Ind_{F\wr S_{k}\times F\wr S_{r}}^{F\wr S_{k+r}}(\Ind_{F\wr S_{k_{1}}\times\cdots\times F\wr S_{k_{l}}\times F\wr S_{r_{1}}\times\cdots\times F\wr S_{r_{l}}}^{F\wr S_{k}\times F\wr S_{r}}(\Phi_{\lambda_{1}}^{1}\boxtimes\cdots\boxtimes\Phi_{\lambda_{l}}^{l}\boxtimes\Phi_{\delta_{1}}^{1}\boxtimes\cdots\boxtimes\Phi_{\delta_{l}}^{l}))
\]

which, by transitivity of induction, equals

\[
\Ind_{F\wr S_{k_{1}}\times\cdots\times F\wr S_{k_{l}}\times F\wr S_{r_{1}}\times\cdots\times F\wr S_{r_{l}}}^{F\wr S_{k+r}}(\Phi_{\lambda_{1}}^{1}\boxtimes\cdots\boxtimes\Phi_{\lambda_{l}}^{l}\boxtimes\Phi_{\delta_{1}}^{1}\boxtimes\cdots\boxtimes\Phi_{\delta_{l}}^{l}).
\]
Rearranging we get 
\[
\Ind_{F\wr S_{k_{1}}\times F\wr S_{r_{1}}\times\cdots\times F\wr S_{k_{l}}\times F\wr S_{r_{l}}}^{F\wr S_{k+r}}((\Phi_{\lambda_{1}}^{1}\boxtimes\Phi_{\delta_{1}}^{1})\boxtimes\cdots\boxtimes(\Phi_{\lambda_{l}}^{l}\boxtimes\Phi_{\delta_{l}}^{l})).
\]
Again using transitivity we can write this as 
\[
\Ind_{F\wr S_{k_{1}+r_{1}}\times\cdots\times F\wr S_{k_{l}+r_{l}}}^{F\wr S_{k+r}}\Ind_{F\wr S_{k_{1}}\times F\wr S_{r_{1}}\times\cdots\times F\wr S_{k_{l}}\times F\wr S_{r_{l}}}^{F\wr S_{k_{1}+r_{1}}\times\cdots\times F\wr S_{k_{l}+r_{l}}}((\Phi_{\lambda_{1}}^{1}\boxtimes\Phi_{\delta_{1}}^{1})\boxtimes\cdots\boxtimes(\Phi_{\lambda_{l}}^{l}\boxtimes\Phi_{\delta_{l}}^{l}))
\]
and using \lemref{CommutativityOfInductionAndOuterTensorProduct}
this equals 
\[
\Ind_{F\wr S_{k_{1}+r_{1}}\times\cdots\times F\wr S_{k_{l}+r_{l}}}^{F\wr S_{k+r}}(\Ind_{F\wr S_{k_{1}}\times F\wr S_{r_{1}}}^{F\wr S_{k_{1}+r_{1}}}(\Phi_{\lambda_{1}}^{1}\boxtimes\Phi_{\delta_{1}}^{1})\boxtimes\cdots\boxtimes\Ind_{F\wr S_{k_{l}}\times F\wr S_{r_{l}}}^{F\wr S_{kl+r_{l}}}(\Phi_{\lambda_{l}}^{l}\boxtimes\Phi_{\delta_{l}}^{l})).
\]
According to \propref{LittlewoodRicharsonRuleSpecialCase} we get
\[
\Ind_{F\wr S_{k_{1}+r_{1}}\times\cdots\times F\wr S_{k_{l}+r_{l}}}^{F\wr S_{k+r}}((\bigoplus_{\gamma_{1}\vdash(k_{1}+r_{1})}c_{\lambda_{1},\delta_{1}}^{\gamma_{1}}\Phi_{\gamma_{1}}^{1})\boxtimes\cdots\boxtimes(\bigoplus_{\gamma_{l}\vdash(k_{l}+r_{l})}c_{\lambda_{l},\delta_{l}}^{\gamma_{l}}\Phi_{\gamma_{l}}^{l}))
\]
which equals 
\[
\bigoplus_{\gamma_{1}\vdash(k_{1}+r_{1})}\cdots\bigoplus_{\gamma_{l}\vdash(k_{l}+r_{l})}(\prod_{i=1}^{l}c_{\lambda_{i},\delta_{i}}^{\gamma_{i}}\Ind_{F\wr S_{k_{1}+r_{1}}\times\cdots\times F\wr S_{k_{l}+r_{l}}}^{F\wr S_{k+r}}(\Phi_{\gamma_{1}}^{1}\boxtimes\cdots\boxtimes\Phi_{\gamma_{l}}^{1}))
\]
which, according to \propref{RecursionForIrreducibleRep}, is precisely

\[
\bigoplus_{\Gamma\Vdash{\bf k}+{\bf r}}(\prod_{i=1}^{l}c_{\lambda_{i},\delta_{i}}^{\gamma_{i}})\Phi_{\Gamma}
\]

as required. 
\end{proof}

\section{Classical branching rules for $F\wr S_{n}$\label{sec:ClassicalBranchingRule}}

In this section we retrieve Pushkarev's result of ``classical''
branching rules for $F\wr S_{n}$ \cite[Theorem 10]{Pushkarev1997}.
Let ${\bf n}=(n_{1},\ldots,n_{r})$ be an integer composition of $n$
and let $\Lambda=(\lambda_{1},\ldots,\lambda_{l})\Vdash{\bf n}$ be
a multipartition. We want to find the decomposition into irreducible
representations of $\Ind_{F\wr S_{n}}^{F\wr S_{n+1}}\Phi_{\Lambda}$
and $\Res_{F\wr S_{n-1}}^{F\wr S_{n}}\Phi_{\Lambda}$. This is relatively
easy using the results of the previous section. We start with induction. 
\begin{thm}
\label{thm:ClassicalBranchingRule} With notation as above 
\[
\Ind_{F\wr S_{n}}^{F\wr S_{n+1}}\Phi_{\Lambda}=\bigoplus_{i=1}^{l}(\dim U_{i}\bigoplus_{\gamma\in Y^{+}(\lambda_{i})}\Phi_{(\lambda_{1},\ldots,\gamma,\ldots,\lambda_{l})})
\]
where $\gamma$ is in the $i$-th position of $(\lambda_{1},\ldots,\gamma,\ldots,\lambda_{l})$.
\end{thm}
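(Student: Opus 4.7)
The plan is to imitate \remref{LittlewoodRichardsonImpliesClassicalBranching}: for ordinary symmetric groups the classical branching rule for induction follows from Littlewood-Richardson by taking $r=1$, and the same strategy should work here provided we first extract the extra ``$F$-factor''. Concretely, I will factor the induction through the intermediate subgroup $F\wr S_{n}\times F$ (which embeds into $F\wr S_{n+1}$ by distributivity of the wreath product, since $F = F\wr S_{1}$) and use transitivity of induction:
\[
\Ind_{F\wr S_{n}}^{F\wr S_{n+1}}\Phi_{\Lambda}\cong\Ind_{F\wr S_{n}\times F\wr S_{1}}^{F\wr S_{n+1}}\Ind_{F\wr S_{n}}^{F\wr S_{n}\times F}\Phi_{\Lambda}.
\]

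First I would handle the inner induction. Since $F\wr S_{n}$ sits inside $F\wr S_{n}\times F$ as the first direct factor, a straightforward coset computation (taking $\{(1,f)\mid f\in F\}$ as coset representatives) gives
\[
\Ind_{F\wr S_{n}}^{F\wr S_{n}\times F}\Phi_{\Lambda}\cong\Phi_{\Lambda}\boxtimes\mathbb{C}F,
\]
where $\mathbb{C}F$ is the regular representation. Decomposing $\mathbb{C}F\cong\bigoplus_{i=1}^{l}(\dim U_{i})\cdot U_{i}$ and invoking \remref{OnIrreducibleRepU_iInDifferentNotation} to rewrite $U_{i}=\Phi_{([1])}^{i}$, additivity of induction turns the outer induction into
\[
\bigoplus_{i=1}^{l}(\dim U_{i})\cdot\Ind_{F\wr S_{n}\times F\wr S_{1}}^{F\wr S_{n+1}}\bigl(\Phi_{\Lambda}\boxtimes\Phi_{([1])}^{i}\bigr).
\]

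The final step is to apply the wreath-product Littlewood-Richardson rule (\thmref{LittlewoodRichardsonRuleForWreathProduct}) to each summand with $k=n$, $r=1$, and $\Delta=(\varnothing,\ldots,[1],\ldots,\varnothing)$ having its single box in the $i$-th position. Because $c_{\lambda_{j},\varnothing}^{\gamma_{j}}$ equals $1$ when $\gamma_{j}=\lambda_{j}$ and $0$ otherwise, only multipartitions $\Gamma$ that agree with $\Lambda$ outside position $i$ contribute, and the $i$-th factor $c_{\lambda_{i},[1]}^{\gamma_{i}}$ reduces (by the classical $r=1$ case, cf.\ \remref{LittlewoodRichardsonImpliesClassicalBranching}) to the indicator of $\gamma_{i}\in Y^{+}(\lambda_{i})$. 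Combining the summands yields exactly the claimed decomposition.

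The only mildly delicate step is the identification $\Ind_{F\wr S_{n}}^{F\wr S_{n}\times F}\Phi_{\Lambda}\cong\Phi_{\Lambda}\boxtimes\mathbb{C}F$; everything else is formal manipulation using tools already set up in the paper. I expect this to be essentially a one-line coset argument, so the proof should be short.
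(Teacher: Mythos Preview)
Your proposal is correct and follows essentially the same route as the paper: factor the induction through $F\wr S_{n}\times F\wr S_{1}$ via transitivity, identify the inner induction with $\Phi_{\Lambda}\boxtimes\mathbb{C}F$ (the paper isolates this as \lemref{InductionAndDirectProduct}), decompose the regular representation, and then apply \thmref{LittlewoodRichardsonRuleForWreathProduct} with $r=1$. Even your assessment of the ``mildly delicate step'' matches the paper's choice to state that identification as a separate lemma.
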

For the proof of \thmref{ClassicalBranchingRule} we need the following
lemma. 
\begin{lem}
\label{lem:InductionAndDirectProduct} Let $U$ be an $H$-representation
and let $K$ be some group, then 
\[
\Ind_{H}^{K\times H}U\cong\mathbb{C}K\boxtimes U.
\]
\end{lem}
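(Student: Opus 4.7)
The plan is to verify this directly using the explicit coset-representative model of induction described earlier in the paper. The embedding $H\hookrightarrow K\times H$ is presumably the natural one $h\mapsto(1_{K},h)$, so the set $\{(k,1_{H})\mid k\in K\}$ is a complete set of left coset representatives of $H$ in $K\times H$, of cardinality $|K|=[K\times H:H]$. Accordingly, as a vector space,
\[
\Ind_{H}^{K\times H}U=\bigoplus_{k\in K}U,
\]
which has the same dimension $|K|\cdot\dim U$ as $\mathbb{C}K\boxtimes U$.

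Next I would define the candidate isomorphism $T:\Ind_{H}^{K\times H}U\to\mathbb{C}K\boxtimes U$ on the basic symbols by $T((k,u))=k\boxtimes u$ and extend linearly. Since $\{(k,1_{H})\}_{k\in K}$ gives a basis of cosets and $\{k\}_{k\in K}$ is a basis of $\mathbb{C}K$, this is manifestly a vector-space isomorphism, so only equivariance remains.

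To check equivariance, I would unwind the induced action. Given $(k',h')\in K\times H$ and a representative $(k,1_{H})$, one has
\[
(k',h')\cdot(k,1_{H})=(k'k,h')=(k'k,1_{H})\cdot(1_{K},h'),
\]
with $(1_{K},h')\in H$. By the formula for the induced action this yields $(k',h')\cdot(k,u)=(k'k,h'\cdot u)$. On the other side, $\mathbb{C}K\boxtimes U$ carries the outer tensor product of the left regular representation of $K$ with $U$, so $(k',h')\cdot(k\boxtimes u)=k'k\boxtimes h'u$. Thus $T$ intertwines the two actions and is the desired isomorphism.

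There is no genuine obstacle; the proof is a direct computation. The only thing to be careful about is not to confuse the two factors of $K\times H$ when resolving $gs_{i}=s_{j}h$, which is why the explicit choice $s_{k}=(k,1_{H})$ makes everything transparent.
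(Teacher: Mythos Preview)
Your proof is correct and follows essentially the same approach as the paper: both choose the coset representatives $\{(k,1_H)\mid k\in K\}$, define $T$ by $((k,1_H),u)\mapsto k\boxtimes u$, and verify equivariance via the decomposition $(k',h')(k,1_H)=(k'k,1_H)(1_K,h')$.
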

\begin{proof}
Clearly $\{(k,1)\mid k\in K\}$ are representatives of the $H\cong\{1_{K}\}\times H$
cosets in $K\times H$. Define $T:\Ind_{H}^{K\times H}U\to\mathbb{C}K\boxtimes U$
by 
\[
T(((k,1),u))=k\boxtimes u
\]
which is clearly a vector space isomorphism and note that 
\begin{align*}
T((k^{\prime},h^{\prime})\cdot((k,1),u)) & =T(((k^{\prime}k,1),h^{\prime}u))\\
 & =k^{\prime}k\boxtimes h^{\prime}u\\
 & =(k^{\prime},h^{\prime})\cdot(k\boxtimes u)\\
 & =(k^{\prime},h^{\prime})\cdot T(((k,1),u))
\end{align*}
so $T$ is an isomorphism of ($K\times H$)-representations. 
\end{proof}

\begin{proof}[Proof of \thmref{ClassicalBranchingRule}]
Noting that $F\wr_{n+1}S_{n}=F\wr S_{n}\times F=F\wr S_{n}\times F\wr S_{1}$
and by transitivity of induction 
\[
\Ind_{F\wr S_{n}}^{F\wr S_{n+1}}\Phi_{\Lambda}=\Ind_{F\wr S_{n}\times F\wr S_{1}}^{F\wr S_{n+1}}\Ind_{F\wr S_{n}}^{F\wr S_{n}\times F}\Phi_{\Lambda}.
\]
According to \lemref{InductionAndDirectProduct} this equals

\[
\Ind_{F\wr S_{n}\times F\wr S_{1}}^{F\wr S_{n+1}}\Phi_{\Lambda}\boxtimes\mathbb{C}F.
\]

It is well-known that the decomposition of $\mathbb{C}F$ is 
\[
\mathbb{C}F=\bigoplus_{i=1}^{l}(\dim U_{i}\cdot U_{i})
\]
so we obtain 
\[
\Ind_{F\wr S_{n}\times F\wr S_{1}}^{F\wr S_{n+1}}(\Phi_{\Lambda}\boxtimes(\bigoplus_{i=1}^{l}\dim U_{i}\cdot U_{i}))=\bigoplus_{i=1}^{l}(\dim U_{i}\Ind_{F\wr S_{n}\times F\wr S_{1}}^{F\wr S_{n+1}}(\Phi_{\Lambda}\boxtimes U_{i})).
\]
But $U_{i}=\Phi_{([1])}^{i}$ (see \remref{OnIrreducibleRepU_iInDifferentNotation})
so we can write this as 
\[
\bigoplus_{i=1}^{l}(\dim U_{i}\Ind_{F\wr S_{n}\times F\wr S_{1}}^{F\wr S_{n+1}}(\Phi_{\Lambda}\boxtimes\Phi_{([1])}^{i})).
\]
Using \thmref{LittlewoodRichardsonRuleForWreathProduct} and \remref{LittlewoodRichardsonImpliesClassicalBranching}
this is precisely the required result. 
\end{proof}
Using Frobenius reciprocity we have the following corollary for restriction. 
\begin{cor}
With notation as above 
\[
\Res_{F\wr S_{n-1}}^{F\wr S_{n}}\Phi_{\Lambda}=\bigoplus_{i=1}^{l}(\dim U_{i}\bigoplus_{\gamma\in Y^{-}(\lambda_{i})}\Phi_{(\lambda_{1},\ldots,\gamma,\ldots,\lambda_{l})})
\]

where $\gamma$ is in the $i$-th position of $(\lambda_{1},\ldots,\gamma,\ldots,\lambda_{l})$.\end{cor}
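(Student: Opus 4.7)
The plan is to derive this from \thmref{ClassicalBranchingRule} by Frobenius reciprocity, exactly as is hinted. Since $\{\Phi_{\Delta}\}$ ranges over a complete list of irreducible $F\wr S_{n-1}$-representations, it suffices to compute the multiplicity $\langle \Res_{F\wr S_{n-1}}^{F\wr S_n}\Phi_{\Lambda},\Phi_{\Delta}\rangle$ for each multipartition $\Delta=(\delta_1,\ldots,\delta_l)$ of some composition $\mathbf{n}'$ of $n-1$. By Frobenius reciprocity (\thmref{FrobeniusReciprocity}) this multiplicity equals $\langle \Phi_{\Lambda},\Ind_{F\wr S_{n-1}}^{F\wr S_n}\Phi_{\Delta}\rangle$.

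Next I would apply \thmref{ClassicalBranchingRule} (with $n$ replaced by $n-1$) to the right hand side to obtain
\[
\Ind_{F\wr S_{n-1}}^{F\wr S_{n}}\Phi_{\Delta}=\bigoplus_{i=1}^{l}\Bigl(\dim U_{i}\bigoplus_{\gamma\in Y^{+}(\delta_{i})}\Phi_{(\delta_{1},\ldots,\gamma,\ldots,\delta_{l})}\Bigr).
\]
Thus $\Phi_{\Lambda}$ occurs in $\Ind_{F\wr S_{n-1}}^{F\wr S_n}\Phi_{\Delta}$ with positive multiplicity if and only if there exists an index $i$ such that $\lambda_{j}=\delta_{j}$ for $j\neq i$ and $\lambda_{i}\in Y^{+}(\delta_{i})$; equivalently, $\Delta$ is obtained from $\Lambda$ by deleting one box from $\lambda_{i}$, i.e.\ $\delta_{i}\in Y^{-}(\lambda_{i})$. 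The index $i$ is uniquely determined by $\Delta$ (it is the unique component where $\Delta$ differs from $\Lambda$), so the multiplicity is exactly $\dim U_{i}$ in that case and $0$ otherwise.

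Summing over all such $\Delta$ gives precisely the claimed decomposition. No step looks to be a real obstacle: the only things to check carefully are the uniqueness of the index $i$ (to avoid double-counting) and the bookkeeping that matches $\gamma\in Y^+(\delta_i)$ on the induction side with $\gamma\in Y^-(\lambda_i)$ on the restriction side, but both are immediate from the definitions of $Y^\pm$.
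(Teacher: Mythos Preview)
Your proposal is correct and follows exactly the approach the paper indicates: the paper simply states ``Using Frobenius reciprocity we have the following corollary for restriction'' without giving further details, and what you have written is precisely the routine verification of that claim via \thmref{ClassicalBranchingRule}. The uniqueness-of-$i$ observation you flag is indeed all that is needed to read off the multiplicity as $\dim U_i$.
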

\begin{example}
Let $\Lambda$ be the multipartition associated to the multi-Young
diagram

\begin{center}
(\,\ydiagram{2}\,,\quad{}\ydiagram{2,1}\,,\quad{}\ydiagram{1,1,1}\,)
\par\end{center}

so $\Phi_{\Lambda}$ is an irreducible representation of $S_{3}\wr S_{8}$.
Assuming we have indexed $\Irr S_{3}$ such that $U_{1}$ is the trivial
representation, $U_{2}$ is the standard representation and $U_{3}$
is the alternating representation then 
\[
\Ind_{S_{3}\wr S_{8}}^{S_{3}\wr S_{9}}\Phi_{\Lambda}
\]
is associated to

\begin{center}
(\,\ydiagram{2,1}\,,\quad{}\ydiagram{2,1}\,,\quad{}\ydiagram{1,1,1}\,)
$\oplus$ (\,\ydiagram{3}\,,\quad{}\ydiagram{2,1}\,,\quad{}\ydiagram{1,1,1}\,)
$\oplus$ 2(\,\ydiagram{2}\,,\quad{}\ydiagram{3,1}\,,\quad{}\ydiagram{1,1,1}\,)
$\oplus$ 
\par\end{center}

\begin{center}
2(\,\ydiagram{2}\,,\quad{}\ydiagram{2,2}\,,\quad{}\ydiagram{1,1,1}\,)
$\oplus$ 2(\,\ydiagram{2}\,,\quad{}\ydiagram{2,1,1}\,,\quad{}\ydiagram{1,1,1}\,)
$\oplus$ (\,\ydiagram{2}\,,\quad{}\ydiagram{2,1}\,,\quad{}\ydiagram{2,1,1}\,)
$\oplus$ 
\par\end{center}

\begin{center}
(\,\ydiagram{2}\,,\quad{}\ydiagram{2,1}\,,\quad{}\ydiagram{1,1,1,1}\,). 
\par\end{center}
\end{example}

\section{Application: The quiver of the category algebra $\mathbb{C}(F\wr\FI_{n})$\label{sec:ApplicationToCategories}}

Denote by $\FI_{n}$ the category of all injective functions between
subsets of $\{1,\ldots,n\}$. In this section we apply the Littlewood-Richardson
rule for computing the ordinary quiver of the category algebra of
$F\wr\FI_{n}$, the wreath product of a finite group $F$ with $\FI_{n}$.
In the next two sections we give some preliminary background on the
wreath product of a group with a category and on quivers. In \subref{QuiverOfFWrFI}
we give the description of the quiver.

\subsection{The wreath product of a group with a category}

All categories in this paper are finite. Hence we can regard a category
$\mathcal{C}$ as a set of objects, denoted $\mathcal{C}^{0}$, and
a set of morphisms, denoted $\mathcal{C}^{1}$. If $a,b\in\mathcal{C}^{0}$
then $\mathcal{C}(a,b)$ is the set of morphisms from $a$ to $b$.
Let $g\in\mathcal{C}(a,b)$ and $g^{\prime}\in\mathcal{C}(c,d)$ be
two morphisms. Recall that the composition $g^{\prime}\cdot g$ is
defined if and only if $b=c$ and we denote this fact by by $\exists g^{\prime}\cdot g$.
A category $\mathcal{D}$ is called a \emph{subcategory }of $\mathcal{C}$
if it obtained from $\mathcal{C}$ by removing objects and morphisms.
$\mathcal{D}$ is a \emph{full subcategory} if $\mathcal{D}(a,b)=\mathcal{C}(a,b)$
for every $a,b\in\mathcal{D}^{0}$. Let $F$ be a finite group, let
$\mathcal{C}$ be a finite category and let $H:\mathcal{C}\to\Set$
be a functor from $\mathcal{C}$ to the category of finite sets. Define
a new category $\mathcal{D}$ in the following way. The set of objects
is the same as the set of objects of $\mathcal{C}$, that is, $\mathcal{D}{}^{0}=\mathcal{C}^{0}$.
Given two objects $a,b\in\mathcal{D}^{0}$, the hom-set $\mathcal{D}(a,b)$
is $\{(f,g)\mid f\in F^{H(a)},\,g\in\mathcal{C}(a,b)\}$ where $F^{H(a)}$
is the set of all functions $f:H(a)\to F$. So we can write a specific
morphism as $(f,g)$. Now, given two morphisms $(f,g)\in\mathcal{D}(a,b)$
and $(f^{\prime},g^{\prime})\in\mathcal{D}(b,c)$ the composition
is 
\[
(f^{\prime},g^{\prime})\cdot(f,g)=((f^{\prime}(H(g)))\cdot f,g^{\prime}g)
\]
where $\cdot$ is componentwise multiplication of functions in $F^{H(a)}$.
\begin{defn}
\label{def:WreathProductOfGroupAndCategory}The category $\mathcal{D}$
defined above is called the \emph{wreath product }of $F$ and $\mathcal{C}$
with respect to $H$ and it is denoted by $F\wr_{H}\mathcal{C}$.
\end{defn}
Since monoids are categories with one object, \defref{WreathProductOfGroupAndCategory}
is also a definition for the wreath product of a group $G$ with a
monoid $M$. In this case the functor $F$ is just an action of $M$
on the left of some set $X$. Hence $M$ acts on the right of $F^{X}$
in the following way. Given $g\in M$ and $f\in F^{X}$ the function
$f\ast g$ is defined by 
\[
(f\ast g)(x)=f(g\cdot x).
\]
The wreath product $F\wr_{X}M$ is then just the right semidirect
product $F^{X}\rtimes M$.
\begin{rem}
One may note that if $M$ is a group then \defref{WreathProductOfGroupAndCategory}
does not coincide with \defref{WreathProductOfGroups}. However, we
will immediately prove that the two ways to define a wreath product
of groups are isomorphic. We have to use different definition for
wreath product in this section because \defref{WreathProductOfGroups}
does not generalize well to monoids and categories.
\end{rem}
In the next lemma we denote by $F\text{wr}_{X}G$ the wreath product
of \defref{WreathProductOfGroupAndCategory} which is apriory different
from $F\wr_{X}G$ of \defref{WreathProductOfGroups}.
\begin{lem}
Let $F$ and $G$ be finite groups such that $G$ acts on the left
$X$. Then
\[
F\text{wr}_{X}G\cong F\wr_{X}G.
\]
\end{lem}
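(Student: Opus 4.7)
The plan is to exhibit an explicit isomorphism $\phi\colon F\text{wr}_{X}G\to F\wr_{X}G$ that is the identity on the second coordinate and ``twists'' the first coordinate by $g^{-1}$. First I would note that both groups share the underlying set $F^{X}\times G$, and both multiply the second coordinate using the operation of $G$; the only difference lies in how $G$ acts on $F^{X}$. In \defref{WreathProductOfGroups} the action appearing is the left action $(g\ast f)(x)=f(g^{-1}x)$, whereas in \defref{WreathProductOfGroupAndCategory} (applied to the group $G$, with the functor $H$ being the given action on $X$) the action appearing in the composition law is the right action $(f\ast g)(x)=f(gx)$.

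Second, I would define $\phi(f,g)=(\hat{f},g)$ where $\hat{f}\in F^{X}$ is given by $\hat{f}(x)=f(g^{-1}x)$. For each fixed $g$, the map $f\mapsto\hat{f}$ is a bijection $F^{X}\to F^{X}$ with inverse $h\mapsto(x\mapsto h(gx))$, so $\phi$ is a bijection of sets.

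Third, I would verify multiplicativity by direct pointwise computation. Expanding the \defref{WreathProductOfGroupAndCategory} product gives $(f,g)(f',g')=((f\ast g')\cdot f',gg')$, and applying $\phi$ produces a pair whose first coordinate at $x$ equals $((f\ast g')\cdot f')((gg')^{-1}x)=f(g'(gg')^{-1}x)\cdot f'((gg')^{-1}x)=f(g^{-1}x)\cdot f'((gg')^{-1}x)$. On the other side, the \defref{WreathProductOfGroups} product $(\hat{f},g)\cdot(\hat{f'},g')$ has first coordinate $\hat{f}\cdot(g\ast\hat{f'})$, whose value at $x$ is $\hat{f}(x)\cdot\hat{f'}(g^{-1}x)=f(g^{-1}x)\cdot f'((g')^{-1}g^{-1}x)=f(g^{-1}x)\cdot f'((gg')^{-1}x)$. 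The two coincide, so $\phi$ is a group homomorphism, hence a group isomorphism.

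There is essentially no real obstacle; one only has to keep straight the two different meanings of $\ast$ in \defref{WreathProductOfGroups} and \defref{WreathProductOfGroupAndCategory} (a left action in the former, a right action in the latter) and observe that the required twist converting one convention into the other is precisely $f\mapsto g\ast f$ of the Definition~\ref{def:WreathProductOfGroups} action.
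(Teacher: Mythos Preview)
Your proof is correct and follows essentially the same approach as the paper: both build the isomorphism by leaving the $G$-coordinate fixed and twisting the $F^{X}$-coordinate via the $G$-action. The only cosmetic difference is the direction---the paper defines $T\colon F\wr_{X}G\to F\text{wr}_{X}G$ by $T(f,g)=(f\ast_{2} g,g)$, while your $\phi$ is precisely $T^{-1}$.
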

\begin{proof}
In this proof we denote by $\ast_{1}$($\ast_{2}$) the left (right)
action of $G$ on $F^{X}$ as in \defref{WreathProductOfGroups} (respectively,
\defref{WreathProductOfGroupAndCategory}). Define $T:F\wr_{X}G\to F\text{wr}_{X}G$
by
\[
T(f,g)=(f\ast_{2}g,g).
\]
Clearly, $T$ has an inverse 
\[
T^{-1}(f,g)=(f\ast_{2}g^{-1},g).
\]
Moreover, note that
\begin{align*}
T((f,g)\cdot(f^{\prime},g^{\prime})) & =T(f\cdot(g\ast_{1}f^{\prime}),gg^{\prime})=T(f\cdot(f^{\prime}\ast_{2}g^{-1}),gg^{\prime})\\
 & =((f\ast_{2}(gg^{\prime}))\cdot(f^{\prime}\ast_{2}g^{\prime}),gg^{\prime})
\end{align*}
while 
\[
T((f,g))\cdot T((f^{\prime},g^{\prime}))=(f\ast_{2}g,g)\cdot(f^{\prime}\ast_{2}g^{\prime},g^{\prime})=((f\ast_{2}(gg^{\prime}))\cdot(f^{\prime}\ast_{2}g^{\prime}),gg^{\prime})
\]
so $T$ is also a group homomorphism as required.
\end{proof}

\subsection{The ordinary quiver of an EI-category algebra}

Recall that a unital \emph{$\mathbb{C}$-algebra} is a unital ring
$A$ that is also a vector space over $\mathbb{C}$ such that $c(ab)=(ca)b=a(cb)$
for all $c\in\mathbb{C}$ and $a,b\in A$. The algebras that are of
interest for us in this section are category algebras. Let $\mathcal{D}$
be a finite category. The \emph{category algebra} $\mathbb{C}\mathcal{D}$
is the $\mathbb{C}$-vector space with basis the morphisms of the
category, that is, all formal linear combinations
\[
\{c_{1}g_{1}+\ldots+c_{k}g_{k}\mid c_{i}\in\mathbb{C},\,g_{i}\in\mathcal{D}^{1}\}
\]
with multiplication being linear extension of
\[
g^{\prime}\cdot g=\begin{cases}
g^{\prime}g & \exists g^{\prime}\cdot g\\
0 & \text{otherwise}
\end{cases}.
\]

A quiver is a non-directed graph where multiple edges and loops are
permitted. The (ordinary) quiver $Q$ of an algebra $A$ is a quiver
that contains information about the algebra's representations. The
exact definition is as follows. The vertices of $Q$ are in a one-to-one
correspondence with the set $\Irr A$ of all irreducible representations
of $A$ (up to isomorphism). Given two irreducible representations
$U$ and $V$ the number of edges (more often called \emph{arrows})
from $U$ to $V$ is 
\[
\dim\Ext^{1}(U,V).
\]
For the sake of simplicity, if $Q$ is the quiver of the algebra of
$\mathcal{D}$ we will call it simply the quiver of $\mathcal{D}$. 

When considering quivers of categories we can restrict our discussion
to a special kind of categories. Two categories $\mathcal{C}$ and
$\mathcal{D}$ are called \emph{equivalent} if there are functors $\mathcal{F}:\mathcal{C}\to \mathcal{D}$ and $\mathcal{G}:\mathcal{D}\to \mathcal{C}$  such that $\mathcal{F}\mathcal{G}\cong 1_{\mathcal{D}}$ and $\mathcal{G}\mathcal{F}\cong 1_\mathcal{C}$ where $\cong$ is natural isomorphism of functors. It is well known that $\mathcal{C}$ and $\mathcal{D}$ are equivalent if and only if  there is a fully faithful
and essentially surjective functor from $\mathcal{C}$ to $\mathcal{D}$. If $\mathcal{C}$ and $\mathcal{D}$ are equivalent
categories then they have the same quiver (since their algebras are
Morita equivalent, see \cite[Proposition 2.2]{Webb2007}). A category
$\mathcal{C}$ is called \emph{skeletal} if no two objects of $\mathcal{C}$
are isomorphic. Note that any category $\mathcal{C}$ is equivalent
to some (unique) skeletal category called its \emph{skeleton}. The
skeleton of $\mathcal{C}$ is the full subcategory having one object
from every isomorphism class of $\mathcal{C}$. So we can restrict
ourselves to discussing skeletal categories. 

There is a special kind of categories whose quiver has a more concrete
description. This description was discovered independently by Li \cite{Li2011}
and by Margolis and Steinberg \cite{Margolis2012}. For explaining
it we need more definitions from category theory. A category $\mathcal{D}$
is called an \emph{EI-category} if every endomorphism is an isomorphism.
In other words, every endomorphism monoid $\mathcal{D}(a,a)$ of this
category is a group. A morphism $g\in\mathcal{D}^{1}$ of an EI-category
is called \emph{irreducible} if it is not an isomorphism but whenever
$g=g^{\prime}g^{\prime\prime}$, either $g^{\prime}$ or $g^{\prime\prime}$
is an isomorphism. The set of irreducible morphisms from $a$ to $b$
is denoted $\IRR A(a,b)$. The quiver of skeletal $EI$-categories
is described in the following theorem, which is \cite[Theorem 4.7]{Li2011}
or \cite[Theorem 6.13]{Margolis2012} for the case of the field of
complex numbers.
\begin{thm}
\label{thm:QuiverOfEICategories}Let $\mathcal{D}$ be a finite skeletal
EI-category and denote by $Q$ the quiver of $\mathcal{D}$. Let $\mathbb{C}\IRR\mathcal{D}(a,b)$
denote the $\mathbb{C}$-vector space spanned by the set $\IRR\mathcal{D}(a,b)$.
It is also an $\mathcal{D}(b,b)\times\mathcal{D}(a,a)$-representation
according to
\[
(h^{\prime},h)\cdot g=h^{\prime}gh^{-1}
\]
for $(h^{\prime},h)\in\mathcal{D}(b,b)\times\mathcal{D}(a,a)$ and
$g\in\IRR\mathcal{D}(a,b)$. Then\end{thm}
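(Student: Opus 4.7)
The plan is to recognize $\mathbb{C}\mathcal{D}$ as a split extension of the semisimple algebra $\bigoplus_{a} \mathbb{C}\mathcal{D}(a,a)$ by a nilpotent ideal, and then read off the arrows of the quiver from $R/R^{2}$ via the standard formula relating arrows in the Gabriel quiver to the radical square.

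First I would introduce $J\subseteq\mathbb{C}\mathcal{D}$ as the subspace with basis the non-invertible morphisms of $\mathcal{D}$. The key structural claim is that $J$ is a two-sided ideal: if $g$ were a non-isomorphism but $hg$ were invertible for some $h$, then in the skeletal category the source and target of $g$ would have to coincide, forcing $g$ to be an endomorphism and hence, by the EI-condition, an isomorphism — a contradiction. Next I would show $J$ is nilpotent by arguing that any composable chain $g_{k}\cdots g_{1}$ of non-isomorphisms must visit $k+1$ pairwise distinct objects (adjacent objects cannot coincide, again by EI plus skeletality), so $J^{|\mathcal{D}^{0}|+1}=0$. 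Since $\mathbb{C}\mathcal{D}/J\cong\bigoplus_{a}\mathbb{C}\mathcal{D}(a,a)$ is semisimple by Maschke's theorem, $J$ must coincide with the Jacobson radical.

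Next I would identify $J/J^{2}$ as a $\mathbb{C}\mathcal{D}/J$-bimodule. Since $J^{2}$ is spanned by non-isomorphisms that factor as a composite of two non-isomorphisms, the complement inside the non-isomorphism basis of $J$ is precisely $\bigsqcup_{a,b}\IRR\mathcal{D}(a,b)$, so the image of $\mathbb{C}\IRR\mathcal{D}(a,b)$ gives a basis for the $(a,b)$-component of $J/J^{2}$. The residual bimodule action of $\mathcal{D}(b,b)\times\mathcal{D}(a,a)$ on this component matches the one described in the statement, because only invertible morphisms survive modulo $J$ and right multiplication by $h$ in the algebra corresponds to the action of $h^{-1}$ on morphisms with source $a$. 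I would then invoke the general fact that for a finite-dimensional algebra $A$ with radical $R$ and semisimple quotient, the number of arrows from a simple $V\in\Irr\mathcal{D}(a,a)$ to a simple $U\in\Irr\mathcal{D}(b,b)$ equals the multiplicity of $U\boxtimes V^{*}$ inside the $(a,b)$-component of $R/R^{2}$ viewed as a $\bigl(\mathbb{C}\mathcal{D}(b,b)\otimes\mathbb{C}\mathcal{D}(a,a)^{\mathrm{op}}\bigr)$-module; combined with the above identification this yields the desired description.

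The main obstacle is the pair of combinatorial verifications that $J$ is an ideal and that it is nilpotent, both of which rely essentially on the skeletal EI-hypothesis; once these are established the remainder is formal and follows from standard finite-dimensional algebra theory. A subtle bookkeeping point is matching the bimodule action coming from algebra multiplication with the one given in the statement — specifically, checking that the inverse appears on the right factor so that $(h',h)\cdot g=h'gh^{-1}$ reproduces, under restriction to units, the two-sided action induced on $J/J^{2}$ by pre- and post-composition in $\mathbb{C}\mathcal{D}$.
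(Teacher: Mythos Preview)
The paper does not prove this theorem; it is quoted from \cite{Li2011} and \cite{Margolis2012}, so there is no in-paper proof to compare against. Your outline is essentially the standard argument found in those references: identify the span $J$ of non-isomorphisms as the Jacobson radical of $\mathbb{C}\mathcal{D}$, compute $J/J^{2}$ as the span of irreducible morphisms, and read off the arrows from the bimodule decomposition of $J/J^{2}$.

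Two places in your sketch need tightening. First, your justification that $J$ is an ideal is not quite right as written: from $hg$ invertible with $g:a\to b$ you get that the source and target of $hg$ coincide, not those of $g$. The clean argument is that if $hg$ is an automorphism of $a$ then $(hg)^{-1}h$ is a left inverse of $g$, so $g\cdot(hg)^{-1}h$ is an idempotent endomorphism of $b$, hence the identity by the EI-hypothesis, forcing $g$ to be an isomorphism. Second, for nilpotence, ``adjacent objects cannot coincide'' is not by itself enough to force the $k+1$ objects in a length-$k$ chain to be pairwise distinct; you need the partial order $a\leq b\iff\mathcal{D}(a,b)\neq\varnothing$ (antisymmetry following from EI plus skeletality), so that a non-isomorphism $a\to b$ gives $a<b$ strictly and a composable chain of non-isomorphisms yields a strictly increasing chain of objects. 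With these two points filled in, your argument is correct and matches the approach of the cited sources.
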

\begin{enumerate}
\item The vertex set of $Q$ is ${\displaystyle \bigsqcup_{a\in\mathcal{D}^{0}}}\Irr\mathcal{D}(a,a)$.
\item If $V\in\Irr(\mathcal{D}(a,a))$ and $U\in\Irr(\mathcal{D}(b,b))$,
then the number of arrows from $V$ to $U$ is the multiplicity of
$U\boxtimes V^{\ast}$ as an irreducible constituent of the $\mathcal{D}(b,b)\times\mathcal{D}(a,a)$-representation
$\mathbb{C}\IRR\mathcal{D}(a,b)$.
\end{enumerate}

\subsection{The quiver of the category $F\wr\FI_{n}$\label{sub:QuiverOfFWrFI}}

As mentioned above, we denote by $\FI_{n}$ the category of all injective
functions between subsets of $\{1,\ldots,n\}$. In other words, the
objects of $\FI_{n}$ are subsets of $\{1,\ldots,n\}$ and given two
objects $A$ and $B$ the hom-set $\FI_{n}(A,B)$ contains all the
injective functions from $A$ to $B$. Note that $\varnothing$ is
an initial object of this category, that is, for every $A\subseteq\{1,\ldots,n\}$
there is a unique empty function from $\varnothing$ to $A$. We are
interested in the category $F\wr_{H}\FI_{n}$ where $H:\FI_{n}\to\Set$
is the inclusion functor. We will omit the $H$ and denote this category
by $F\wr\FI_{n}$. This category has a natural description using matrices
similar to the description of $F\wr S_{n}$. We can identify the hom-set
$F\wr\FI_{n}(A,B)$ with a set of matrices whose rows are indexed
by elements of $B$ and columns are indexed by elements of $A$. The
matrix $M^{(f,g)}$ identified with $(f,g)\in F\wr\FI_{n}(A,B)$ is
defined by
\[
M_{i,j}^{(f,g)}=\begin{cases}
f(j) & g(j)=i\\
0 & \text{otherwise}
\end{cases}
\]
where $i\in B$ and $j\in A$. $g$ is a total function so $M^{(f,g)}$
has no zero columns. Moreover, since $g$ is an injective function
$M^{(f,g)}$ is column and row monomial, that is, every row and column
contains at most one non-zero element. Hence $F\wr\FI_{n}(A,B)$ can
be identified with the set of all column and row monomial matrices
over $F$ without zero columns where the columns are indexed by $A$
and the rows are indexed by $B$. Composition of morphisms then
corresponds to matrix multiplication. Note that the multiplication
$M^{(f^{\prime},g^{\prime})}\cdot M^{(f,g)}$ of two matrices of this
form where $(f,g)\in F\wr\FI_{n}(A,B)$ and $(f^{\prime},g^{\prime})\in F\wr\FI_{n}(C,D)$
is defined if and only if $B=C$. In other words, multiplication is
defined if and only if the columns of $M^{(f^{\prime},g^{\prime})}$
and the rows of $M^{(f,g)}$ are indexed by the same set. It is easy
to see that any endomorphism monoid $F\wr\FI_{n}(A,A)$ of this category
is isomorphic to the group $F\wr S_{A}$ hence this category is actually
an EI-category. Our goal is to describe the quiver of $F\wr\FI_{n}$.
The case where $F$ is the trivial group was originaly done in \cite[Theorem 8.1.2]{Brimacombe2011}.
A different computation of this case using \thmref{QuiverOfEICategories}
is done in \cite[Example 6.15]{Margolis2012} and here we merely imitate
their method. As explained in the previous section we can work with
the skeleton of $F\wr\FI_{n}$. For the sake of simplicity we will
denote this skeleton by $\SF_{n}$. It is clear that two objects $A$
and $B$ of $F\wr\FI_{n}$ are isomorphic if and only if $|A|=|B|$.
Hence we can identify the skeleton $\SF_{n}$ with the full subcategory
of $F\wr\FI_{n}$ whose objects are the empty set and $\{1,\ldots,k\}$
for $k=1,\ldots,n$. So we can identify the objects of $\SF_{n}$
with $0,\ldots,n$. Now the hom-set $\SF_{n}(k,r)$ is identified
with all the $r\times k$ matrices over $F$ which are column and
row monomial and without zero columns. Composition of morphisms
then corresponds to matrix multiplication as explained above. By
\thmref{QuiverOfEICategories} we know that the vertices of the quiver
of $\SF_{n}$ are in one-to-one correspondence with irreducible representation
of the endomorphism groups, which are $F\wr S_{k}$ for $0\leq k\leq n$.
In other words:
\begin{cor}
Let $F$ be a group with $l$ distinct irreducible representations.
The vertices in the quiver of $F\wr\FI_{n}$ can be identified with
all the multi-Young diagrams with $k$ boxes and $l$ components where
$0\leq k\leq n$.
\end{cor}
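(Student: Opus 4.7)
The plan is simply to assemble the corollary from two ingredients already in hand: the description of the quiver vertices for skeletal EI-categories given in \thmref{QuiverOfEICategories}, and the classification of irreducible representations of $F\wr S_k$ in terms of multipartitions.

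First, I would invoke the observation made just before the corollary that the skeleton $\SF_n$ of $F\wr\FI_n$ has object set $\{0,1,\ldots,n\}$, with endomorphism group at $k$ isomorphic to $F\wr S_k$ (here $F\wr S_0$ is the trivial group, corresponding to the empty multi-Young diagram). Since $F\wr\FI_n$ and $\SF_n$ are equivalent, their category algebras are Morita equivalent and so share the same quiver, so it suffices to identify the vertex set of the quiver of $\SF_n$. By the first item of \thmref{QuiverOfEICategories}, this vertex set is
\[
\bigsqcup_{k=0}^{n} \Irr(F\wr S_k).
\]

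Second, I would recall from the discussion in Section~3 that $\Irr(F\wr S_k)$ is in bijection with the set of multipartitions of $k$ having $l$ components, where $l = |\Irr F|$; equivalently, with multi-Young diagrams having $k$ boxes distributed among $l$ components. Taking the disjoint union over $0 \le k \le n$ gives exactly the set of multi-Young diagrams with at most $n$ boxes and exactly $l$ components, which is what the corollary asserts.

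I expect no real obstacle here: the statement is essentially a bookkeeping consequence of combining \thmref{QuiverOfEICategories}(1) with the classification of $\Irr(F\wr S_k)$. The only minor subtlety worth flagging is the convention for $k=0$, where both the endomorphism group and the corresponding multi-Young diagram are empty/trivial, accounting for the lone vertex coming from the object $\varnothing$.
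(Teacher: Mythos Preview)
Your proposal is correct and mirrors the paper's own reasoning: the corollary is stated immediately after the observation that the endomorphism groups of the skeleton $\SF_n$ are $F\wr S_k$ for $0\le k\le n$, and is presented as a direct consequence of \thmref{QuiverOfEICategories}(1) together with the parametrization of $\Irr(F\wr S_k)$ by multipartitions. The paper does not give a separate proof, so your write-up is in fact more detailed than what appears there.
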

The next step is to identify the irreducible morphisms of $\SF_{n}$.
\begin{lem}
\label{lem:IrreducibleMorphismsOfFI}The irreducible morphisms of
$\SF_{n}$ are precisely the morphisms from $k$ to $k+1$ for $0\leq k\leq n-1$.
In other words,
\[
\IRR\SF_{n}(k,r)=\begin{cases}
\SF_{n}(k,r) & r=k+1\\
0 & \text{otherwise}
\end{cases}.
\]
\end{lem}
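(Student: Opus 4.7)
The plan is to verify irreducibility directly by analysing the possible factorisations in $\SF_{n}$. Recall that a morphism in $\SF_{n}(k,r)$ is identified with a pair $(f,h)$ where $h\colon\{1,\ldots,k\}\to\{1,\ldots,r\}$ is an injection and $f\colon\{1,\ldots,k\}\to F$, so $\SF_{n}(k,r)\neq\varnothing$ exactly when $k\leq r$; in particular, any factorisation $g=g'g''$ of a morphism $g\in\SF_{n}(k,r)$ through an object $m$ forces $k\leq m\leq r$. The peripheral cases are then immediate: if $r<k$ then $\SF_{n}(k,r)=\varnothing$, and if $r=k$ every element of $\SF_{n}(k,k)\cong F\wr S_{k}$ is an isomorphism and so is excluded from $\IRR$; hence $\IRR\SF_{n}(k,r)=0$ in both situations.

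For the case $r=k+1$, the constraint $k\leq m\leq k+1$ leaves only $m\in\{k,k+1\}$, and in either case one of the factors $g',g''$ is an endomorphism and hence an isomorphism by the EI property of $\SF_{n}$; thus every morphism of $\SF_{n}(k,k+1)$ is irreducible. For the case $r>k+1$, I would exhibit an explicit factorisation through the object $k+1$. Given $g=(f,h)\in\SF_{n}(k,r)$, set $g''=(\mathbf{1}_{F},h_{2})$, where $h_{2}\colon\{1,\ldots,k\}\hookrightarrow\{1,\ldots,k+1\}$ is the standard inclusion and $\mathbf{1}_{F}$ is the constant function equal to $1_{F}$; this gives a non-isomorphism in $\SF_{n}(k,k+1)$. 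Since $r>k\geq|\mathrm{im}(h)|$, we can extend $h$ to an injection $h_{1}\colon\{1,\ldots,k+1\}\to\{1,\ldots,r\}$ by sending $k+1$ to any element outside $\mathrm{im}(h)$, and extend $f$ arbitrarily to some $f_{1}\colon\{1,\ldots,k+1\}\to F$; the resulting morphism $g'=(f_{1},h_{1})\in\SF_{n}(k+1,r)$ is not an isomorphism since $k+1<r$. Applying the composition rule of $F\wr_{H}\FI_{n}$ with $H$ the inclusion functor (so that $H(h_{2})=h_{2}$) yields $g'\cdot g''=((f_{1}\circ h_{2})\cdot\mathbf{1}_{F},h_{1}h_{2})=(f,h)=g$, exhibiting $g$ as reducible.

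The main subtlety is unwinding the composition formula of $F\wr_{H}\FI_{n}$ correctly on the constructed pair; apart from that, the proof reduces to the combinatorial observation that $k+1$ is the unique object sitting strictly between $k$ and $r$ once $r>k+1$, so I do not anticipate any substantive obstacle.
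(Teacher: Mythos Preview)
Your proof is correct and follows essentially the same approach as the paper: factor a morphism $(f,h)\in\SF_{n}(k,r)$ with $r>k+1$ through the object $k+1$ using the standard inclusion and an extension of $h$. The only cosmetic differences are that the paper places the $F$-data $f$ in the first factor (writing $(f,g)=(\mathbf{1}_{F},g')\cdot(f,\inc)$) whereas you place it in the second, and you spell out the irreducibility argument for $r=k+1$ via the EI property where the paper simply says ``it is clear''.
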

\begin{proof}
It is clear that any morphism in $\SF_{n}(k,k+1)$ is irreducible.
On the other hand, take some morphism $(f,g)\in\SF_{n}(k,r)$ and
assume that $k+1<r$. Choose $j\in\{1,\ldots,r\}$ not in the image
of $g$. Define $\inc:\{1,\ldots,k\}\to\{1,\ldots,k+1\}$ to be the
inclusion function and $g^{\prime}:\{1,\ldots,k+1\}\to\{1,\ldots,r\}$
is the function defined by 
\[
g^{\prime}(i)=\begin{cases}
g(i) & i\leq k\\
j & i=k+1
\end{cases}.
\]
It is clear that $g^{\prime}$ and $\inc$ are not bijections and
that $g=g^{\prime}\circ\inc$. Denote by ${\bf 1}_{F}$ the constant
function ${\bf 1}_{F}:\{1,\ldots,k+1\}\to F$ defined by ${\bf 1}_{F}(i)=1_{F}$
for $i=1,\ldots,k+1$. Since $g^{\prime}$ and $\inc$ are not bijections
it is clear that $({\bf 1}_{F},g^{\prime})$ and $(f,\inc)$ are not
isomorphisms. Moreover,$({\bf 1}_{F},g^{\prime})\cdot(f,\inc)=(f,g)$
so $(f,g)$ is not an irreducible morphism as required.
\end{proof}
From \thmref{QuiverOfEICategories} and \lemref{IrreducibleMorphismsOfFI}
we can immediately deduce the following corollary.
\begin{cor}
\label{cor:AllArrowsInTheQuiverAreOneStepUp}Let $V\in\Irr F\wr S_{k}$
and $U\in\Irr F\wr S_{r}$ be two vertices in the quiver of $F\wr\FI_{n}$
such that $r\neq k+1$ then there are not arrows from $V$ to $U$.
\end{cor}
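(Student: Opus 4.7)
The plan is to observe that this corollary is essentially a direct combination of Theorem~\ref{thm:QuiverOfEICategories} with Lemma~\ref{lem:IrreducibleMorphismsOfFI}, passing through the skeleton $\SF_n$ to justify working with the EI-category framework. Since $F\wr\FI_n$ and $\SF_n$ are equivalent categories their algebras are Morita equivalent and therefore share the same quiver, so it suffices to compute the arrows in the quiver of $\SF_n$.

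I would first recall that by Theorem~\ref{thm:QuiverOfEICategories}, for $V\in\Irr(\SF_n(k,k))=\Irr(F\wr S_k)$ and $U\in\Irr(\SF_n(r,r))=\Irr(F\wr S_r)$, the number of arrows from $V$ to $U$ in the quiver of $\SF_n$ equals the multiplicity of $U\boxtimes V^{\ast}$ as an irreducible constituent of the $\SF_n(r,r)\times\SF_n(k,k)$-representation $\mathbb{C}\IRR\SF_n(k,r)$. Then I would invoke Lemma~\ref{lem:IrreducibleMorphismsOfFI}: under the hypothesis $r\neq k+1$, the set $\IRR\SF_n(k,r)$ is empty, so $\mathbb{C}\IRR\SF_n(k,r)$ is the zero vector space, which of course contains no copy of any irreducible representation. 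Hence the number of arrows from $V$ to $U$ is zero.

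There is no real obstacle here; the content of the corollary has already been absorbed into the preceding lemma, and the only thing to do is to phrase the conclusion in the language of the quiver. The proof is therefore a one-line citation of the two results.
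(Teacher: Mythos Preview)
Your proof is correct and matches the paper's own reasoning exactly: the paper simply states that the corollary follows immediately from \thmref{QuiverOfEICategories} and \lemref{IrreducibleMorphismsOfFI}, and you have spelled out precisely that deduction. The extra remark about passing to the skeleton $\SF_n$ via Morita equivalence is already part of the ambient setup in the paper, so including it is harmless.
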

It is left to consider the situation where $r=k+1$. We have to study
the representation $\mathbb{C}\IRR(\SF_{n}(k,k+1))=\mathbb{C}\SF_{n}(k,k+1)$
under the action described in \thmref{QuiverOfEICategories}. This
is a permutation representation, i.e., it is a linearization of the
action of $F\wr S_{k}\times F\wr S_{k+1}$ on the set $\SF_{n}(k,k+1)$
given by
\[
((h^{\prime},\pi^{\prime}),(h,\pi))\cdot(f,g)=(h^{\prime},\pi^{\prime})\cdot(f,g)\cdot(h,\pi)^{-1}.
\]

\begin{lem}
The above action is transitive.\end{lem}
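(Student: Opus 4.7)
The plan is to identify a canonical base point in $\SF_n(k,k+1)$ and show that every morphism lies in its orbit under the given action. A natural choice is $(\mathbf{1}_F,\inc)$, where $\inc\colon\{1,\ldots,k\}\hookrightarrow\{1,\ldots,k+1\}$ is the inclusion and $\mathbf{1}_F$ denotes the constant function with value $1_F$. Given an arbitrary $(f,g)\in\SF_n(k,k+1)$, I would produce an explicit element of $F\wr S_{k+1}\times F\wr S_k$ that moves $(\mathbf{1}_F,\inc)$ onto $(f,g)$.

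The two factors will handle the two independent pieces of data comprising $(f,g)$. Since $g$ is injective, its image omits a unique element of $\{1,\ldots,k+1\}$, so one can extend $g$ to a permutation $\pi'\in S_{k+1}$ by setting $\pi'(j)=g(j)$ for $j\leq k$ and sending $k+1$ to the omitted element; then $\pi'\circ\inc=g$. For the right factor, take $(h,\pi)=(f^{-1},1_{S_k})\in F\wr S_k$, whose inverse in the category wreath product is $(f,1_{S_k})$. Acting with $((\mathbf{1}_F,\pi'),(f^{-1},1_{S_k}))$ on the base point, and using the composition rule of \defref{WreathProductOfGroupAndCategory}, namely $(f_2,g_2)\cdot(f_1,g_1)=((f_2\circ g_1)\cdot f_1,\, g_2\circ g_1)$, one first obtains
\[
(\mathbf{1}_F,\pi')\cdot(\mathbf{1}_F,\inc)=(\mathbf{1}_F,\,\pi'\circ\inc)=(\mathbf{1}_F,g),
\]
and then
\[
(\mathbf{1}_F,g)\cdot(f^{-1},1_{S_k})^{-1}=(\mathbf{1}_F,g)\cdot(f,1_{S_k})=(f,g),
\]
which gives the desired transitivity.

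Conceptually, the argument is nothing more than the observation that left row-permutations and row-scalings coming from $F\wr S_{k+1}$, combined with right column-scalings and column-permutations from $F\wr S_k$, suffice to reduce any row-and-column monomial $(k+1)\times k$ matrix over $F$ without zero columns to the ``standard'' matrix represented by $(\mathbf{1}_F,\inc)$. There is no essential obstacle; the only point requiring care is tracking the conventions for multiplication and inversion in the category wreath product, which differ slightly from those of the group wreath product in \defref{WreathProductOfGroups}.
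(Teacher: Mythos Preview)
Your proof is correct and follows essentially the same approach as the paper: both choose the canonical base point $(\mathbf{1}_F,\inc)$ and exhibit an explicit group element connecting it to an arbitrary $(f,g)$. The only cosmetic difference is that the paper moves $(f,g)$ to the base point using solely the left $F\wr S_{k+1}$-factor (and remarks parenthetically that the left action alone is already transitive), whereas you move the base point to $(f,g)$ using both factors.
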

\begin{proof}
Chose some $(f,g)\in\SF_{n}(k,k+1)$ and let $j\in\{1,\ldots,k+1\}$
be the only element not in the image of $g$. Define $\pi^{\prime}\in S_{k+1}$
by 
\[
\pi^{\prime}(i)=\begin{cases}
g^{-1}(i) & i\neq j\\
k+1 & i=j
\end{cases}.
\]
Recall that $g$ is injective so $g^{-1}(i)$ is well defined if $i\in\im g$.
It is clear that $\pi^{\prime}g=\inc:\{1,\ldots,k\}\to\{1,\ldots,k+1\}$.
Now define $h^{\prime}:\{1,\ldots,k+1\}\to F$ by 
\[
h^{\prime}(i)=\begin{cases}
(f(g^{-1}(i)))^{-1} & i\neq j\\
1 & i=j
\end{cases}.
\]
It is easy to see that $(h^{\prime},\pi^{\prime})\cdot(f,g)=({\bf 1}_{F},\inc)$
hence the action is transitive (even if we multiply only on the left).
\end{proof}
It is well-known that if the action of $G$ on some set $X$ is transitive
then the permutation representation $\mathbb{C}X$ is $\Ind_{K}^{G}(\tr_{K})$
where $K=\stab(x)$ is the stabilizer of some $x\in X$ and $\tr_{K}$
is its trivial representation.

So our representation is also of this form. We want to understand
better the stabilizer of some $x\in\SF_{n}(k,k+1)$. It is convenient
to choose $x=({\bf 1}_{F},\inc)$ and to use the matrix interpretation
discussed above.
\begin{lem}
Choose $x=({\bf 1}_{F},\inc)\in\SF_{n}(k,k+1)$. The stabilizer $\stab(x)$
is isomorphic to $(F\wr S_{k})\times F$.\end{lem}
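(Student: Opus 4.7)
The plan is to exploit the matrix description of $F\wr\FI_n$ introduced earlier in \Secref{ApplicationToCategories}. Identify $x=({\bf 1}_{F},\inc)\in\SF_n(k,k+1)$ with the $(k+1)\times k$ monomial matrix
\[
M=\begin{pmatrix} I_{k} \\ 0 \end{pmatrix}
\]
whose first $k$ rows form the identity matrix (with diagonal entries $1_F$) and whose last row is zero. Similarly view $(h',\pi')\in F\wr S_{k+1}$ and $(h,\pi)\in F\wr S_k$ as a $(k+1)\times(k+1)$ monomial matrix $A$ and a $k\times k$ monomial matrix $B$ respectively, so that the action reads $A\cdot M\cdot B^{-1}$. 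The stabilizer condition becomes the matrix equation $A\cdot M=M\cdot B$.

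Writing $A$ in block form
\[
A=\begin{pmatrix} A_{11} & A_{12} \\ A_{21} & A_{22} \end{pmatrix},\qquad A_{11}\in F^{k\times k},\ A_{22}\in F^{1\times 1},
\]
direct computation gives
\[
A\cdot M=\begin{pmatrix} A_{11} \\ A_{21} \end{pmatrix},\qquad M\cdot B=\begin{pmatrix} B \\ 0 \end{pmatrix},
\]
so the stabilizer condition is equivalent to $A_{11}=B$ together with $A_{21}=0$.

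Now invoke the row and column monomial property of $A$: since the last row of $A$ has exactly one non-zero entry and $A_{21}=0$ forces that entry out of the first $k$ columns, it must sit in position $(k+1,k+1)$, i.e., $A_{22}\neq 0$. Because every column of $A$ also has exactly one non-zero entry, the last column must then be zero above that entry, giving $A_{12}=0$. Hence $A$ is block diagonal with $A_{11}\in F\wr S_k$ (a $k\times k$ monomial matrix with no zero columns, automatically invertible as $A\in F\wr S_{k+1}$) and $A_{22}\in F$.

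Consequently the stabilizer is exactly
\[
\{((A_{11},A_{22}),\,A_{11})\mid A_{11}\in F\wr S_k,\ A_{22}\in F\},
\]
and the map $((A_{11},A_{22}),A_{11})\mapsto(A_{11},A_{22})$ is a group isomorphism onto $(F\wr S_k)\times F$. There is no real obstacle; the only thing to be careful about is the bookkeeping of the wreath product composition convention, which the matrix viewpoint handles automatically via ordinary matrix multiplication.
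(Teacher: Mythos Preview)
Your proof is correct and follows essentially the same route as the paper: both identify $x$ with the matrix $\begin{pmatrix} I_k \\ 0 \end{pmatrix}$ and analyze the stabilizer condition via block decomposition, concluding that the $(k+1)\times(k+1)$ factor must be block diagonal of the form $A_{11}\oplus A_{22}$ with $A_{11}$ equal to the $k\times k$ factor. Your write-up is in fact a bit more explicit than the paper's, which simply asserts that ``it is easy to see'' the block-diagonal form; you spell out how the row- and column-monomial property forces $A_{12}=0$ once $A_{21}=0$. (Note that the paper reverses the roles of the letters $A$ and $B$ relative to your convention, but this is purely cosmetic.)
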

\begin{proof}
$({\bf 1}_{F},\inc)$ is identified with the $(n+1)\times n$ matrix
with $1$ along its main diagonal and $0$ elsewhere.
\[
({\bf 1}_{F},\inc)=\left(\begin{array}{ccc}
1 & 0 & 0\\
0 & \ddots & 0\\
0 & 0 & 1\\
0 & \cdots & 0
\end{array}\right)=\left(\begin{array}{ccc}
\\
 & I\\
\\
\hline 0 & \cdots & 0
\end{array}\right).
\]

It is easy to see that given any matrix $A\in F\wr S_{k}$ if we want
some $B\in F\wr S_{k+1}$ such that 
\[
B\left(\begin{array}{ccc}
\\
 & I\\
\\
\hline 0 & \cdots & 0
\end{array}\right)A=\left(\begin{array}{ccc}
\\
 & I\\
\\
\hline 0 & \cdots & 0
\end{array}\right)
\]
then $B$ must be of the form 
\[
B=\left(\begin{array}{ccc|c}
 &  &  & 0\\
 & A^{-1} &  & \vdots\\
 &  &  & 0\\
\hline 0 & \cdots & 0 & a
\end{array}\right)=A^{-1}\oplus(a)
\]
where $a$ can be any element of $F$. Hence 
\[
\stab(({\bf 1}_{F},\inc))=\{(A\oplus(a),A)\mid A\in F\wr S_{k},\quad a\in F\}\cong(F\wr S_{k})\times F
\]
as required.\end{proof}
\begin{prop}
\label{prop:NumberOfArrowsByBranchingRule}Let $V\in\Irr F\wr S_{k}$
and $U\in\Irr F\wr S_{k+1}$ identified with two vertices of the quiver.
The number of arrows from $V$ to $U$ is the multiplicity of $U$
as an irreducible constituent in the $F\wr S_{k+1}$-representation
$\Ind_{(F\wr S_{k})\times F}^{F\wr S_{k+1}}(V\boxtimes\tr_{F})$. \end{prop}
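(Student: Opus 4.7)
The plan is to combine Theorem \ref{thm:QuiverOfEICategories} with the transitive action analysis already established, and then reduce to the desired induced representation via two applications of Frobenius reciprocity.

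By Theorem \ref{thm:QuiverOfEICategories} and Lemma \ref{lem:IrreducibleMorphismsOfFI}, the number of arrows from $V$ to $U$ equals the multiplicity of $U\boxtimes V^{\ast}$ in the permutation $F\wr S_{k+1}\times F\wr S_{k}$-representation $\mathbb{C}\SF_{n}(k,k+1)$. Since the preceding lemmas show the action is transitive with stabilizer of the base point $({\bf 1}_{F},\inc)$ equal to $K=\{(A\oplus(a),A)\mid A\in F\wr S_{k},\,a\in F\}\cong(F\wr S_{k})\times F$, we have
\[
\mathbb{C}\SF_{n}(k,k+1)\cong\Ind_{K}^{F\wr S_{k+1}\times F\wr S_{k}}\tr_{K}.
\]
So by Frobenius reciprocity the quantity I want to compute equals $\langle\Res_{K}(U\boxtimes V^{\ast}),\tr_{K}\rangle$.

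The key step is to identify this restriction as a $K\cong(F\wr S_{k})\times F$-representation. Under the identification $(A,a)\leftrightarrow(A\oplus(a),A)$, an element $(A,a)\in K$ acts on a simple tensor $u\boxtimes\varphi$ as $((A\oplus(a))\cdot u)\boxtimes(A\cdot\varphi)$. In other words, the first coordinate of the embedding is the standard embedding $\iota:(F\wr S_{k})\times F\hookrightarrow F\wr S_{k+1}$ (so $U$ becomes $\Res_{(F\wr S_{k})\times F}^{F\wr S_{k+1}}U$), and the second coordinate is the projection onto $F\wr S_{k}$ (so $V^{\ast}$ becomes the outer tensor $V^{\ast}\boxtimes\tr_{F}$). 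Hence, as a $K$-representation (with inner tensor product on the right):
\[
\Res_{K}(U\boxtimes V^{\ast})\cong\bigl(\Res_{(F\wr S_{k})\times F}^{F\wr S_{k+1}}U\bigr)\otimes(V^{\ast}\boxtimes\tr_{F}).
\]

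Finally, I use the standard identity $\langle X,Y\rangle=\langle X\otimes Y^{\ast},\tr\rangle$, which together with $(V^{\ast}\boxtimes\tr_{F})^{\ast}\cong V\boxtimes\tr_{F}$ gives
\[
\langle\Res_{K}(U\boxtimes V^{\ast}),\tr_{K}\rangle=\bigl\langle\Res_{(F\wr S_{k})\times F}^{F\wr S_{k+1}}U,\,V\boxtimes\tr_{F}\bigr\rangle,
\]
and a second application of Frobenius reciprocity converts the right-hand side into $\langle U,\Ind_{(F\wr S_{k})\times F}^{F\wr S_{k+1}}(V\boxtimes\tr_{F})\rangle$, which is precisely the multiplicity claimed in the statement. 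The main obstacle is step three, the bookkeeping for the restriction of the outer tensor $U\boxtimes V^{\ast}$ along the diagonal-like embedding of $K$ into $F\wr S_{k+1}\times F\wr S_{k}$; once that identification is unwound, the rest is an application of reciprocity twice.
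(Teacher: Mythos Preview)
Your proof is correct and follows essentially the same route as the paper: identify the permutation module as $\Ind_K\tr_K$, apply Frobenius reciprocity, unwind the restriction along the embedding $(A,a)\mapsto(A\oplus(a),A)$, and apply Frobenius reciprocity once more. The only cosmetic difference is that the paper carries out the middle step by writing the character sum explicitly, whereas you invoke the identity $\langle X\otimes Y^{\ast},\tr\rangle=\langle X,Y\rangle$; these are the same computation.
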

\begin{proof}
Denote $K=\stab(({\bf 1}_{F},\inc))$. According to \thmref{QuiverOfEICategories}
and the above discussion, the required number is the multiplicity
of $U\boxtimes V^{\ast}$ as an irreducible constituent in the $F\wr S_{k+1}\times F\wr S_{k}$-representation
$\Ind_{K}^{F\wr S_{k+1}\times F\wr S_{k}}\tr_{K}$ where $\tr_{K}$
is the trivial representation of $K$. Using inner product of characters,
and recalling that we use the same notation for the representation
and its character, this number is
\[
\langle U\boxtimes V^{\ast},\Ind_{K}^{F\wr S_{k+1}\times F\wr S_{k}}\tr_{K}\rangle.
\]
By Frobenius reciprocity this equals 
\begin{align*}
\langle U\boxtimes V^{\ast},\Ind_{K}^{F\wr S_{k+1}\times F\wr S_{k}}\tr_{K}\rangle & =\langle\Res_{K}^{F\wr S_{k+1}\times F\wr S_{k}}(U\boxtimes V^{\ast}),\tr_{K}\rangle.
\end{align*}

Recall that $K=\{(A\oplus(a),A)\mid A\in F\wr S_{k},\quad a\in F\}$
so 

\begin{align*}
\langle\Res_{K}^{F\wr S_{k+1}\times F\wr S_{k}}(U\boxtimes V^{\ast}),\tr_{K}\rangle. & =\frac{1}{|K|}\sum_{(A\oplus(a),A)\in K}U\boxtimes V^{\ast}(A\oplus(a),A)\\
 & =\frac{1}{|K|}\sum_{(A\oplus(a),A)\in K}U(A\oplus(a))V^{\ast}(A)\\
 & =\frac{1}{|K|}\sum_{(A\oplus(a),A)\in K}U(A\oplus(a))\overline{V(A)}
\end{align*}
We want to think of the last term as an inner product of $(F\wr S_{k})\times F$-representations,
but neither $U$ nor $V$ is an $(F\wr S_{k})\times F$-representations.
However, 
\[
U(A\oplus(a))=\Res_{F\wr S_{k}\times F}^{F\wr S_{k+1}}U(A\oplus(a))
\]
and
\[
V(A)=V(A)\tr_{F}(a)=(V\boxtimes\tr_{F})(A,a)
\]
as a $K\cong(F\wr S_{k})\times F$ representation. Hence
\begin{align*}
\frac{1}{|K|}\sum_{(A\oplus(a),A)\in K}U(A\oplus(a))\overline{V(A)}
\end{align*}

equals

\begin{align*}
\frac{1}{|(F\wr S_{k})\times F|}\sum_{(A,a)\in F\wr S_{k}\times F}(\Res_{(F\wr S_{k})\times F}^{F\wr S_{k+1}}U)(A\oplus(a))\overline{(V\boxtimes\tr_{F})(A,a)}
\end{align*}

which is the inner product
\[
\langle\Res_{(F\wr S_{k})\times F}^{F\wr S_{k+1}}U,V\boxtimes\tr_{F}\rangle.
\]

Using again Frobenius reciprocity this equals 
\[
\langle U,\Ind_{K}^{F\wr S_{k+1}}(V\boxtimes\tr_{F})\rangle
\]
which is precisely the required number.
\end{proof}
Clearly $K\cong(F\wr S_{k})\times F=(F\wr S_{k})\times(F\wr S_{1})$.
Note also that the embedding $K\hookrightarrow F\wr S_{k+1}$ is precisely
the standard embedding of the Littlewood-Richardson rule. By \thmref{LittlewoodRichardsonRuleForWreathProduct}
we can conclude:
\begin{thm}
The vertices of the quiver of $F\wr\FI_{n}$ are in one-to-one correspondence
with multi-Young diagrams with with $k$ boxes and $l$ components
where $l=|\Irr F|$ and $0\leq k\leq n$. Let ${\bf k}=(k_{1},\ldots k_{l})$
and ${\bf r}=(r_{1},\ldots,r_{l})$ be two integer compositions of
$k$ and $r$ respectively and let $\Lambda=(\lambda_{1},\ldots,\lambda_{l})\Vdash\mbox{{\bf k}}$
and $\Delta=(\delta{}_{1},\ldots,\delta_{l})\Vdash\mbox{{\bf r}}$
be multipartitions of ${\bf k}$ and $\mathbf{r}$ respectively. There
can be no more than one arrow from $\Phi_{\Lambda}$ to $\Phi_{\Delta}$.
There is an arrow if and only if the following holds:
\begin{itemize}
\item $r=k+1$.
\item $r_{1}=k_{1}+1$ and $r_{i}=k_{i}$ for $2\leq i\leq l$.
\item $\lambda_{1}$ is obtained from $\delta_{1}$ by adding one box, and
$\lambda_{i}=\delta_{i}$ for $2\leq i\leq l$.
\end{itemize}
\end{thm}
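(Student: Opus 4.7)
The plan is to combine the previously established results into a direct computation. By \corref{AllArrowsInTheQuiverAreOneStepUp} the quiver has no arrows between vertices unless $r = k+1$, so the first bullet point is immediate and I only need to analyze the case $r = k+1$. For this case, \propref{NumberOfArrowsByBranchingRule} tells me that the number of arrows from $\Phi_\Lambda$ to $\Phi_\Delta$ equals the multiplicity of $\Phi_\Delta$ in
\[
\Ind_{(F\wr S_{k})\times F}^{F\wr S_{k+1}}(\Phi_{\Lambda}\boxtimes \tr_{F}).
\]

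First I rewrite $\tr_F$ in a form suitable for the Littlewood-Richardson rule. Since $U_1$ is the trivial representation of $F$, \remref{OnIrreducibleRepU_iInDifferentNotation} gives $\tr_F = U_1 = \Phi_{([1])}^{1}$, which as a multipartition of the composition $(1,0,\ldots,0)$ is $\Delta_0 := ([1],\varnothing,\ldots,\varnothing)$. Thus the induced representation above is exactly of the shape handled by \thmref{LittlewoodRichardsonRuleForWreathProduct}, with ${\bf k} = (k_1,\ldots,k_l)$, ${\bf r} = (1,0,\ldots,0)$, and multipartition $\Delta_0$.

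Applying \thmref{LittlewoodRichardsonRuleForWreathProduct}, the decomposition is
\[
\bigoplus_{\Gamma\Vdash(k_1+1,k_2,\ldots,k_l)}\Big(\prod_{i=1}^{l} c_{\lambda_i,(\Delta_0)_i}^{\gamma_i}\Big)\,\Phi_\Gamma.
\]
For $i \geq 2$ the coefficient $c_{\lambda_i,\varnothing}^{\gamma_i}$ equals $1$ if $\gamma_i = \lambda_i$ and $0$ otherwise (this is immediate from the skew-tableau description, or alternatively from the fact that inducing from the trivial subgroup is a no-op). For $i = 1$, the coefficient $c_{\lambda_1,[1]}^{\gamma_1}$ is covered by \remref{LittlewoodRichardsonImpliesClassicalBranching} (classical branching rule): it equals $1$ if $\gamma_1\in Y^+(\lambda_1)$ and $0$ otherwise. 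Therefore the only $\Phi_\Gamma$ that appear, each with multiplicity exactly one, are those with $\gamma_i = \lambda_i$ for $i\geq 2$ and $\gamma_1$ obtained from $\lambda_1$ by adding a single box. Matching against $\Phi_\Delta$ yields the stated characterization, and in particular shows the multiplicity is at most $1$.

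There is no real obstacle here: the theorem is a straightforward synthesis of the Littlewood-Richardson rule for $F\wr S_n$ (\thmref{LittlewoodRichardsonRuleForWreathProduct}) with the EI-category quiver machinery (\thmref{QuiverOfEICategories}, \propref{NumberOfArrowsByBranchingRule}). The only point requiring a moment of care is the identification $\tr_F = \Phi_{([1])}^{1}$ so that the induced representation is genuinely an instance of the generalized Littlewood-Richardson setup; once this is in place, the bookkeeping on the LR coefficients immediately produces the three bullet-point conditions.
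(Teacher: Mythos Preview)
Your proof is correct and follows essentially the same route as the paper: invoke \corref{AllArrowsInTheQuiverAreOneStepUp} and \propref{NumberOfArrowsByBranchingRule}, identify $\tr_F$ with the multipartition $([1],\varnothing,\ldots,\varnothing)$ (the paper cites \remref{MultiYoungDiagramOfTrivialRepresentations} rather than \remref{OnIrreducibleRepU_iInDifferentNotation}, but this is the same identification), and then apply \thmref{LittlewoodRichardsonRuleForWreathProduct}. You simply spell out the evaluation of the coefficients $c_{\lambda_i,\varnothing}^{\gamma_i}$ and $c_{\lambda_1,[1]}^{\gamma_1}$ a bit more explicitly than the paper does.
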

\begin{proof}
We have already seen that there are no arrows from $\Phi_{\Lambda}$
to $\Phi_{\Delta}$ unless $r=k+1$ (\corref{AllArrowsInTheQuiverAreOneStepUp}).
If $r=k+1$ then \propref{NumberOfArrowsByBranchingRule} implies
that the number of arrows from $\Phi_{\Lambda}$ to $\Phi_{\Delta}$
is the multiplicity of $\Phi_{\Delta}$ as an irreducible constituent
in $\Ind_{F\wr S_{k}\times F}^{F\wr S_{k+1}}(\Phi_{\Lambda}\boxtimes\tr_{F})=\Ind_{F\wr S_{k}\times F\wr S_{1}}^{F\wr S_{k+1}}(\Phi_{\Lambda}\boxtimes\tr_{F})$.
Recall that by \remref{MultiYoungDiagramOfTrivialRepresentations}
the multi-Young diagram corresponds to $\tr_{F}$ is
\[
(\ydiagram{1}\,,\varnothing,\ldots,\varnothing)
\]
so the result follows immediately from the Littlewood-Richardson rule
for $F\wr\nobreak S_{n}$.\end{proof}
\begin{example}
The quiver of the category $S_{3}\wr \FI_{2}$ is given in the following
figure:

\begin{center}\begin{tikzpicture}\path (0,-2) node (S0_0_0) {$(\varnothing,\varnothing,\varnothing)$}; \path (0,-1) node (S1_0_0) {$(\ydiagram{1}\,,\varnothing,\varnothing)$}; \path (-1,0) node (S21_0_0) {$(\ydiagram{2}\,,\varnothing,\varnothing)$}; \path (1,0) node (S22_0_0) {$(\ydiagram{1,1}\,,\varnothing,\varnothing)$};\draw[thick,->] (S0_0_0)--(S1_0_0); \draw[thick,->] (S1_0_0)--(S21_0_0); \draw[thick,->] (S1_0_0)--(S22_0_0); \end{tikzpicture} \begin{tikzpicture}\path (0,-1) node (S0_1_0) {$(\varnothing,\ydiagram{1}\,,\varnothing)$}; \path (0,0) node (S1_1_0) {$(\ydiagram{1}\,,\ydiagram{1}\,,\varnothing)$}; \draw[thick,->] (S0_1_0)--(S1_1_0); \end{tikzpicture} \begin{tikzpicture}\path (0,-1) node (S0_0_1) {$(\varnothing,\varnothing,\ydiagram{1}\,)$}; \path (0,0) node (S1_0_1) {$(\ydiagram{1}\,,\varnothing,\ydiagram{1}\,)$}; \draw[thick,->] (S0_0_1)--(S1_0_1); \end{tikzpicture} \end{center}

\begin{center}\begin{tikzpicture}\path (-4,0) node (S0_21_0) {$(\varnothing,\ydiagram{2}\,,\varnothing)$}; \path (-2,0) node (S0_22_0) {$(\varnothing,\ydiagram{1,1}\,,\varnothing)$}; \path (0,0) node (S0_1_1) {$(\varnothing,\ydiagram{1}\,,\ydiagram{1}\,)$}; \path (2,0) node (S0_0_21) {$(\varnothing,\varnothing,\ydiagram{2}\,)$}; \path (4,0) node (S0_0_22) {$(\varnothing,\varnothing,\ydiagram{1,1}\,)$}; \end{tikzpicture} \end{center}
\end{example}
Clearly, two multipartitions $\Lambda=(\lambda_{1},\ldots,\lambda_{l})$
and $\Delta=(\delta{}_{1},\ldots,\delta_{l})$ are in the same connected
component if and only if $\lambda_{i}=\delta_{i}$ for $i=2,\ldots,l$.
Hence connected components can be parametrized by multipartitions
of $k$ with $l-1$ components where $k=0,\ldots,n$. Denote by $P_{l}(n)$
the number of multipartitions of $n$ with $l$ components. A generating
function for this sequence and other formulas can be found in \cite{Andrews2008}.
The following result is immediate.
\begin{cor}
Let $F$ be a non-trivial finite group and denote $l=|\Irr F|$. Then
the quiver of $F\wr\FI_{n}$ has 
\[
\sum_{k=0}^{n}P_{l-1}(k)
\]
connected components.
\end{cor}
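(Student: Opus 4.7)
The plan is to reduce the count of connected components to an enumeration of multipartitions by using the arrow description given in the preceding theorem. The first move is to translate the description of arrows into a description of the connected components of the underlying (undirected) graph. According to the theorem, an arrow between $\Phi_\Lambda$ and $\Phi_\Delta$ requires $\lambda_i = \delta_i$ for all $i \geq 2$, and the first components must differ by exactly one box. Therefore, any walk in the underlying graph preserves the tuple $(\lambda_2,\ldots,\lambda_l)$, so two vertices in the same connected component must agree on coordinates $2$ through $l$. This is the easy direction already stated in the corollary.

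For the converse I would fix a tuple $(\lambda_2,\ldots,\lambda_l)$ of partitions with $k := |\lambda_2|+\cdots+|\lambda_l| \leq n$ and show that every vertex $\Phi_{(\lambda_1,\lambda_2,\ldots,\lambda_l)}$ with $|\lambda_1|\leq n-k$ lies in a single connected component. The idea is to walk every such vertex down to the canonical representative $(\varnothing,\lambda_2,\ldots,\lambda_l)$ by repeatedly deleting a removable box from the first coordinate. Each deletion is precisely an arrow of the quiver (read in the opposite direction), because removing a box from $\lambda_1$ leaves $\lambda_2,\ldots,\lambda_l$ unchanged and decreases $|\lambda_1|$, hence also $k_1$, by one. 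Since every non-empty Young diagram has at least one removable corner, this process terminates at $(\varnothing,\lambda_2,\ldots,\lambda_l)$, proving connectedness.

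Combining the two directions gives a bijection between connected components of the quiver of $F\wr\FI_n$ and tuples $(\lambda_2,\ldots,\lambda_l)$ of partitions with $|\lambda_2|+\cdots+|\lambda_l|\leq n$, i.e.\ multipartitions with $l-1$ components and total size at most $n$. Partitioning this set by total size $k$ and recalling that $P_{l-1}(k)$ counts multipartitions of $k$ with $l-1$ components yields
\[
\sum_{k=0}^{n} P_{l-1}(k)
\]
connected components, as desired.

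I expect no serious obstacle: the theorem immediately preceding the corollary already contains all the hard combinatorial content (the Littlewood--Richardson computation of arrows). The only conceptual point to check carefully is the converse walk-down argument, and this is just the standard observation that one can always peel off a corner of a Young diagram. Everything else is bookkeeping.
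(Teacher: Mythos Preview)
Your proposal is correct and follows exactly the same approach as the paper: the paper observes (just before the corollary, and labeling the conclusion ``immediate'') that two multipartitions lie in the same connected component if and only if they agree in coordinates $2,\ldots,l$, so components are parametrized by multipartitions of $k$ with $l-1$ components for $0\le k\le n$. You have simply spelled out the walk-down argument that the paper leaves implicit in the word ``clearly.''
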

\textbf{Acknowledgements: }The author is grateful to Tullio Ceccherini-Silberstein
for examining this work and for his very helpful remarks. 

\bibliographystyle{plain}
\bibliography{library}

\end{document}